\documentclass[10pt]{amsart}
\usepackage[style=alphabetic]{biblatex}
\usepackage{tikz}
\usetikzlibrary{arrows.meta,positioning,calc}
\usepackage{amssymb}
\usepackage{amsthm}
\usepackage[all]{xy}
\usepackage{xcolor}
\usepackage{thmtools}
\usepackage{hyperref}
\usepackage[nameinlink]{cleveref}
\usepackage{graphicx}
\usepackage{mathrsfs}
\usepackage{tikz-3dplot}
\usepackage{mathtools}
\usepackage{enumitem}
\usepackage[section]{placeins}
\usepackage{microtype}

\makeatletter

\@addtoreset{equation}{section}
\makeatother

\hypersetup{
 colorlinks,
 linkcolor={teal},
 citecolor={teal},
 urlcolor={teal},
}

\calclayout

\DeclareFieldFormat
  [article,book,inbook,incollection,inproceedings,patent,thesis,unpublished]
  {title}{\emph{#1\isdot}}

\theoremstyle{plain}
\newtheorem{theorem}{Theorem}[section]
\newtheorem{lemma}[theorem]{Lemma}
\newtheorem{corollary}[theorem]{Corollary}
\newtheorem{proposition}[theorem]{Proposition}
\newtheorem{conjecture}[theorem]{Conjecture}

\theoremstyle{definition}
\newtheorem{definition}[theorem]{Definition}
\newtheorem{example}[theorem]{Example}
\newtheorem{remark}[theorem]{Remark}



\newcommand{\zero}{\mathbf{0}}
\newcommand{\one}{\mathbf{1}}
\newcommand{\NN}{\mathbb{N}}
\newcommand{\R}{\mathbb{R}}
\newcommand{\ZZ}{\mathbb{Z}}

\DeclareMathOperator{\sgn}{sgn}
\DeclareMathOperator{\conv}{conv}
\newcommand{\A}{\mathcal{A}}
\newcommand{\B}{\mathcal{B}}
\newcommand{\C}{\mathcal{C}}
\newcommand{\D}{\mathcal{D}}

\newcommand{\T}{\mathcal{T}}
\newcommand{\F}{\mathcal{F}}
\newcommand{\G}{\mathcal{G}}

\DeclareMathOperator{\id}{id}
\newcommand{\MC}{\mathrm{MC}}
\newcommand{\MH}{\mathrm{MH}}

\newcommand{\Bl}{\mathrm{Bl}}

\newcommand{\M}{\mathcal{M}}
\newcommand{\N}{\mathcal{N}}
\newcommand{\X}{\mathsf{X}}
\newcommand{\Lcov}{\mathcal{L}}
\newcommand{\Sep}{\operatorname{Sep}}
\newcommand{\rk}{\operatorname{rk}}
\newcommand{\cl}{\operatorname{cl}}

\newcommand{\Hom}{\operatorname{Hom}}

\newcommand{\eps}{\varepsilon}

\newcommand{\Ftwo}{\mathbb F_2}
\newcommand{\kk}{\Bbbk}
\newcommand{\bb}{\mathfrak{b}}
\newcommand{\cc}{\mathfrak{c}}
\newcommand{\bS}{\boldsymbol{S}}
\newcommand{\bA}{\boldsymbol{A}}
\newcommand{\bD}{\boldsymbol{D}}
\newcommand{\bF}{\boldsymbol{F}}
\newcommand{\bH}{\boldsymbol{H}}
\newcommand{\bG}{\boldsymbol{G}}

\begin{document}

\title{Mod 2 magnitude cohomology ring of real hyperplane arrangements}
\author[Y. Liu]{Ye Liu}
\address{Department of Pure Mathematics, Xi’an Jiaotong-Liverpool University, Suzhou, Jiangsu 215123, P. R. China}
\email{yeliumath@gmail.com}

\date{\today}
\subjclass[2020]{Primary 55N35; Secondary 05B35, 52C35}
\keywords{Magnitude cohomology, hyperplane arrangement, oriented matroid, tope graph}

\begin{abstract}
Let $\mathcal{A}$ be a finite central real hyperplane arrangement and let $\mathcal{G}(\mathcal{A})$ be its tope graph. Koizumi recently proved that the crossing-graded magnitude homology of $\mathcal{G}(\mathcal{A})$ is torsion-free and is freely indexed by face flags. We refine his result by proving that a fixed crossing vector and terminal chamber determine a summand of rank at most one. Over $\mathbb{F}_2$, we use the canonical cohomology basis to determine the crossing-graded magnitude cohomology ring of $\mathcal{G}(\mathcal{A})$.
\end{abstract}

\maketitle
\setcounter{tocdepth}{1}
\tableofcontents

\section{Introduction}

Magnitude is a cardinality-like invariant of enriched categories and metric spaces introduced by Leinster \cite{Leinster2013,Leinster2019}. Hepworth and Willerton constructed magnitude homology for graphs and showed that its graded Euler characteristic recovers magnitude \cite{Hepworth2017}. Hepworth subsequently introduced magnitude cohomology and equipped it with an associative, generally noncommutative cup product \cite{Hepworth2022}. The resulting ring is genuinely stronger than the underlying groups. In particular, for a finite metric space, its magnitude cohomology ring recovers the metric.

Real hyperplane arrangements provide a particularly structured family of finite metric spaces. Let $\A=\{H_e\}_{e\in E}$ be a finite real hyperplane arrangement, a connected component of the complement is called a chamber. The \emph{tope graph} $\G(\A)$ has vertices the chambers, and two chambers are adjacent if they are separated by exactly one hyperplane. It is a finite metric space with the usual graph path metric. The tope graph $\G(\A)$ is known to carry sufficient information to recover the underlying oriented matroid up to relabeling and reorientation (\cite[Theorem 4.2.14]{Bjorner1999}), and therefore it determines the topology of the complexified complement and various combinatorial and algebraic invaraints of the arrangement (see \cite{Yagi2026}).

Recently, the study of magnitude theory of tope graphs $\G(\A)$ has been initialized in \cite{Koizumi2026}. Using special features of tope graphs, Koizumi and Liu managed to prove that the magnitude and magnitude homology of $\G(\A)$ for central arrangements $\A$ possess many nice structural properties. Later, Koizumi \cite{Koizumi2026a} completely determined the integral magnitude homology of $\G(\A)$ for affine arrangements $\A$. A key refinement made by Koizumi is that the magnitude length parameter can be refined by a \emph{crossing vector}. Among others, he proved that this homology is torsion free and is indexed by a combinatorial structure called face flags. In particular, it is determined by the intersection poset of the arrangement.

The aim of this paper is to study the magnitude cohomology ring of $\G(\A)$ for central arrangements $\A$. We first refine Koizumi's face flag theorem by noticing that for a terminal chamber $B$ and a prescribed crossing vector $\alpha$, the indexed magnitude homology has rank at most one. The nontrivial case has a combinatorial characterization that $(\alpha,B)$ is \emph{tope-admissible}. Then taking dual over $\Ftwo$, each tope admissible pair $(\alpha,B)$ indexes a canonical nontrivial cohomology class $u_{\alpha,B}$. The novelty of this paper is that we give a complete multiplicative table of these classes (\Cref{product-formula}). 

Let us outline the proof. For a crossing vector $\alpha\in\NN^E$, its upper level sets $F_j:=U_j(\alpha)=\{e\in E\mid \alpha_e\geq j\}$ form a decreasing sequence of subsets of the ground set $E$. Under the assumption of tope-admissibility, these $F_j$ are flats of the underlying matroid. Moreover, starting from the terminal chamber $B_m=B$, reflecting across the flats in the order $F_m,\ldots,F_1$ produces a sequence of chambers $B_{m-1},\ldots,B_0$ such that $(\one_{F_i},B_i)$ is tope-admissible and a standard factorization
\[
u_{\alpha,B}=u_{\one_{F_1},B_1}\smile\cdots\smile u_{\one_{F_m},B_m}.
\]
This reduces the problem to single-flat classes and a sorting algorithm. The product formula is quite natural. For $(\alpha,B)$ and $(\beta,C)$ tope-admissible, we show that the product $u_{\alpha,B}\smile u_{\beta,C}$ is trivial, unless
\begin{itemize}
    \item their standard factorizations can be concatenated, and
    \item $(\alpha+\beta,C)$ is tope-admissible, and
    \item the cohomology degree of $u_{\alpha,B}\smile u_{\beta,C}$ agrees with that of $u_{\alpha+\beta,C}$.
\end{itemize}
The third condition turns out to be equivalent to that there is a basis of the underlying matroid which simultaneously maximizes $\alpha$ and $\beta$, which naturally brings weighted matroid optimization theory to our scope. When these three conditions are satisfied, we have
\[
u_{\alpha,B}\smile u_{\beta,C}=u_{\alpha+\beta,C}.
\]

The structure of the paper is as follows. In \Cref{prelim}, we review necessary materials of (oriented) matroids and magnitude theory. \Cref{matroid-optimization} discusses matroid optimization theory and prove some technical results used later. \Cref{periodicty-section} refines Koizumi's face flag theorem and define mod $2$ magnitude cohomology classes. In \Cref{integral-basis}, we construct an integral basis for magnitude homology with antipodal endpoints, which is used in \Cref{local-modular} for proving the local modular relation. In \Cref{main-result}, we assemble all previous results to prove the product formula.

\subsection*{Glossary} Throughout this paper, we use the following letters for specific objects.
\begin{itemize}
    \item $\NN$: the set of nonnegative integers.
    \item $E$: a finite set (the ground set).
    \item $e,f,g$: elements of $E$.
    \item $S,T,U$: (general) subsets of $E$.
    \item $\alpha,\beta,\gamma$: functions/vectors in $\NN^E$.
    \item $\one_S$: the characteristic vector of $S\subseteq E$.
    \item $\zero$: the zero vector/function.
    \item $V,W$: finite dimensional real vector spaces.
    \item $\A$: a central real hyperplane arrangement in $V$ indexed by $E$.
    \item $\M$: an oriented matroid on $E$ (underlying oriented matroid of $\A$).
    \item $M$: a matroid on $E$ (underlying matroid of $\M$).
    \item $A,B,C,D$: chambers/topes of $\A$ (or $\M$).
    \item $X,Y,Z$: faces/covectors of $\A$ (or $\M$).
    \item $F,G$: flats of $M$.
\end{itemize}

\section{Preliminaries}\label{prelim}

\subsection{Matroids and oriented matroids}\label{matroid-background}
Standard references of matroids and oriented matroids are \cite{Oxley2011,Bjorner1999,Anderson2025}. For references of hyperplane arrangements, see \cite{Stanley2007,Orlik1992}.

\subsubsection{Matroids}
Let $M$ be a matroid on the finite ground set $E$, with rank function $\rk_M$. We abbreviate $\rk_M(S)$ to $\rk S$ when the ambient matroid is clear. The rank of $M$ is defined by $\rk M=\rk E$. The closure of $S\subseteq E$ is
\[
\cl_M(S)=\{e\in E \mid \rk(S\cup\{e\})=\rk S\},
\]
and a subset $F\subseteq E$ is a \emph{flat} if $F=\cl_M(F)$. The flats form a geometric lattice $L(M)$ where the meet is intersection and the join is $F\vee G=\cl_M(F\cup G)$. We call a pair of flats $(F,G)$ \emph{modular} if
\[
\rk F+\rk G=\rk(F\cap G)+\rk(F\vee G).
\]

A subset $I\subseteq E$ is \emph{independent} if $\rk I=|I|$, and a \emph{basis} is a maximal independent subset. Let $\B(M)$ be the set of bases of $M$. A \emph{circuit} is a dependent subset that is minimal under inclusion. Thus every proper subset of a circuit is independent. An element is a \emph{loop} if it belongs to no basis, and a \emph{coloop} if it belongs to every basis. Two distinct nonloops $e,f$ are \emph{parallel} if they form a circuit, equivalently if $\rk\{e,f\}=1$. A matroid is \emph{simple} if it has no loops and no parallel elements.

If $\bb\in\B(M)$ is a basis and $e\not\in\bb$, then $\bb\cup\{e\}$ contains a unique circuit, called the \emph{fundamental circuit} of $e$ with respect to $\bb$ and is denoted by $\C_M(e,\bb)$. It records the possible basis exchanges,
\[
f\in \C_M(e,\bb)\setminus\{e\}\iff(\bb\setminus\{f\})\cup\{e\}\in\B(M).
\]

The following useful lemma is an immediate consequence of \cite[Proposition~1.1.6, Corollary~1.2.6, and Proposition~1.4.11]{Oxley2011}.

\begin{lemma}\label{fund-circuit-lemma}
Let $\bb\in\B(M)$ and $e\in E\setminus\bb$. For every subset $I\subseteq\bb$, the following are equivalent:
\begin{enumerate}[label=\textup{(\roman*)}]
\item $e\in\cl_M(I)$;
\item $\mathcal C_M(e,\bb)\subseteq I\cup\{e\}$.
\end{enumerate}
\end{lemma}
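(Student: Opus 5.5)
The plan is to prove both implications directly, using only three facts about the fundamental circuit $C:=\C_M(e,\bb)$. First, $C\subseteq\bb\cup\{e\}$ and $e\in C$. Second, $C\setminus\{e\}$ is independent. Third, for any circuit $C'$ and any $x\in C'$ we have $x\in\cl_M(C'\setminus\{x\})$, because a circuit and its deletion of one element have the same rank. We will also use that closure is monotone, and that $I\cup\{e\}$, being a subset of $\bb\cup\{e\}$ (which contains exactly one circuit), contains a circuit only if that circuit is $C$.

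For (ii)$\Rightarrow$(i): assume $C\subseteq I\cup\{e\}$. Then $C\setminus\{e\}\subseteq I$. Since $C$ is a circuit containing $e$, we get $e\in\cl_M(C\setminus\{e\})$. Monotonicity of closure then gives $e\in\cl_M(I)$.

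For (i)$\Rightarrow$(ii): assume $e\in\cl_M(I)$, so $\rk(I\cup\{e\})=\rk I=|I|$, where the last equality holds because $I\subseteq\bb$ is independent. Hence $I\cup\{e\}$ has cardinality $|I|+1$ but rank $|I|$, so it is dependent and contains some circuit $C'$. As $I$ is independent, $C'$ must contain $e$. Moreover $C'\subseteq I\cup\{e\}\subseteq\bb\cup\{e\}$. By uniqueness of the circuit in $\bb\cup\{e\}$, $C'=C$, and therefore $C\subseteq I\cup\{e\}$.

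No step is a real obstacle. The only point to check is the uniqueness of the circuit inside $\bb\cup\{e\}$, which is exactly what makes $\C_M(e,\bb)$ well defined in the paper's definition. Alternatively, it follows from the circuit elimination axiom: two distinct circuits through $e$ inside $\bb\cup\{e\}$ would produce a circuit inside $\bb$, which is impossible.
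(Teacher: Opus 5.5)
Your proof is correct and is essentially the paper's argument. The paper gives no written proof; it only cites Oxley for two standard facts: the uniqueness of the circuit inside $\bb\cup\{e\}$ (equivalently inside $I\cup\{e\}$ for independent $I$), and the characterization that $e\in\cl_M(I)\setminus I$ if and only if some circuit $C$ satisfies $e\in C\subseteq I\cup\{e\}$. Your two implications prove these cited facts directly in the case at hand.
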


For $S\subseteq E$, the \emph{restriction} $M|S$ is the matroid on $S$ with
\[
\rk_{M|S}(T)=\rk_M(T),\quad T\subseteq S.
\]
The \emph{contraction} $M/S$ is the matroid on $E\setminus S$ with
\[
\rk_{M/S}(T)=\rk_M(T\cup S)-\rk_M(S),\quad T\subseteq E\setminus S.
\]
If $M_1$ and $M_2$ have disjoint ground sets, their direct sum $M_1\oplus M_2$ is characterized by
\[
\rk_{M_1\oplus M_2}(T_1\sqcup T_2)=\rk_{M_1}(T_1)+\rk_{M_2}(T_2).
\]

For a matroid $M$, two elements $e,f\in E$ lie in the same \emph{connected component} of $M$ if $e=f$ or some circuit contains both $e$ and $f$. This is an equivalence relation. If its equivalence classes are $E_1,\ldots,E_t$, then
\[
M=(M|E_1)\oplus\cdots\oplus(M|E_t),
\]
and none of the factors admits a further nontrivial direct sum decomposition.

A subset $S\subseteq E$ is a \emph{separator} if
\[
\rk E=\rk S+\rk(E\setminus S).
\]
Equivalently,
\[
M=(M|S)\oplus(M|(E\setminus S)).
\]
This is also equivalent to saying that no circuit meets both $S$ and $E\setminus S$. Consequently, the separators are exactly the unions of connected components.

\subsubsection{Oriented matroids}
An oriented matroid $\M$ on $E$ with underlying matroid $M$ is characterized here primarily through its covector set
\[
\Lcov(\M)\subseteq\{+,-,0\}^E,
\]
which has a partial order induced by $0<\pm$ coordinatewise. The maximal covectors are the \emph{topes}, denoted by $\T(\M)$. The oriented matroid $\M$ is also characterized by its \emph{chirotope}. If $r=\rk M$, a chirotope is an alternating map
\[
\chi:E^r\to\{+,-,0\}
\]
whose nonzero ordered supports are exactly the ordered bases of $M$, the sign records their orientation. For a covector $X\in\Lcov(\M)$, write its zero set as
\[
z(X)=\{e\in E \mid X_e=0\}.
\]
The zero sets of covectors are precisely $L(M)$. Restriction and contraction are compatible with the corresponding matroid operations,
\begin{align*}
\Lcov(\M|S)&=\{X|_S \mid X\in\Lcov(\M)\},\\
\Lcov(\M/S)&=\{X|_{E\setminus S} \mid X\in\Lcov(\M),\ X|_S=0\}.
\end{align*}
The covectors of a direct sum are the coordinatewise pairs of covectors of its two factors.

There are several useful operations on sign vectors. For a sign vector $X\in\{+,-,0\}^E$ and a subset $S\subseteq E$, let $X\setminus S$ and $\eps_S X$ be the sign vectors defined by
\begin{equation}\label{backslash-reversal}
(X\setminus S)_e=\begin{cases}
    X_e, & \text{ if }e\not\in S,\\
    0, & \text{ if }e\in S.
\end{cases}
\qquad
(\eps_S X)_e=\begin{cases}
    -X_e, &\text{ if } e\in S,\\
    X_e, &\text{ if } e\not\in S.
\end{cases}
\end{equation}
Also for $X,Y\in\{+,-,0\}^E$, their \emph{Tits product} $X\circ Y\in\{+,-,0\}^E$ is defined by
\begin{equation}\label{titsproduct}
(X\circ Y)_e=\begin{cases}
X_e, & \text{ if } X_e\neq 0,\\
Y_e, & \text{ if } X_e=0.
\end{cases}
\end{equation}
Their \emph{separating set} is
\[
\Sep_{\M}(X,Y)=\{e\in E\mid X_e\neq Y_e\}.
\]

\subsubsection{Hyperplane arrangements}
For a real central arrangement $\A=\{H_e \mid e\in E\}$ in $V$, choose defining forms $\varphi_e\in V^*$ with $H_e=\ker\varphi_e$. Then $\A$ defines an oriented matroid $\M$ whose covector set $\Lcov(\M)$ consists of sign vectors $\bigl(\sgn\varphi_e(x)\bigr)_{e\in E}$ for $x\in V$. We also say $\A$ (or $\{\varphi_e\}_{e\in E}$) is a realization of $\M$. A covector is also called a \emph{face} of $\A$. Geometrically, a face is the set of points $x$ with a common sign vector. In particular, the chambers of $\A$ correspond to the topes of $\M$. Throughout this paper, we frequently abuse the notions faces with covectors, and chambers with topes.

The chirotope of $\A$ is also useful in later sections. Put $W=\operatorname{span}_{\mathbb R}\{\varphi_e\mid e\in E\}\subseteq V^*$, with $r=\dim W=\rk\A$. After choosing an orientation of $W$, which is equivalent to choosing a positive side of the $1$-dimensional space $\det W:=\bigwedge^r W$, the chirotope of the arrangement is the alternating map
\[
\chi_{\A}:E^r\to \{+,-,0\}
\]
defined by
\[
\chi_{\A}(e_1,\ldots,e_r)=\begin{cases}
 +, & \varphi_{e_1}\wedge\cdots\wedge\varphi_{e_r}\text{ is positive in }\det W,\\
 -, & \varphi_{e_1}\wedge\cdots\wedge\varphi_{e_r}\text{ is negative in }\det W,\\
 0, & \varphi_{e_1},\ldots,\varphi_{e_r}\text{ are linearly dependent}.
 \end{cases}
\]
Its nonzero ordered supports are precisely the ordered bases of the underlying matroid. Reversing the orientation of $W$ multiplies the whole chirotope by $-1$, which does not change the oriented matroid. Replacing one defining form $\varphi_e$ by $-\varphi_e$ reorients the element $e$.

\subsection{Tope sets, tope graphs, and tope intervals}

For a loopless oriented matroid $\N$, its \emph{tope graph} $\G(\N)$ has vertex set $\T(\N)$ and two topes are adjacent if their separating set is a rank-one flat of the underlying matroid. If $\N$ is simple, every rank-one flat is a singleton, so
\[
A\sim B\iff|\Sep_\N(A,B)|=1.
\]
In this paper, we consider the tope graph $\G=\G(\A)=\G(\M)$ of a real central hyperplane arrangement $\A$. The underlying oriented matroid $\M$ is simple. Therefore,
\[
d_{\G}(A,B)=|\Sep_\M(A,B)|,
\]
and the sign-vector map embeds $\G$ isometrically into the hypercube $\{+,-\}^E$.

For a covector $X\in\Lcov(\M)$, put
\[
\T_X=\T(\M)_X=\{B\in\T(\M)\mid X\leq B\},
\]
and let $\G_X=\G(\M)_X$ be the subgraph of $\G(\M)$ induced by $\T(\M)_X$. We call it the \emph{facial subgraph} associated with $X$.

We define the closed interval $[A,B]_{\G}$ as the set of topes (including $A$ and $B$) traversed by any minimal walk in the graph $\G$ from $A$ to $B$. We equip $[A,B]_{\G}$ with a partial order $C\prec D$ if there is a minimal walk traversing $C$ before $D$. Also define the open interval $(A,B)_{\G}$ as obtained from $[A,B]_{\G}$ deleting the endpoints $A$ and $B$. A closed interval $[A,B]_{\G}$ is called \emph{facial} with respect to a covector $X\in\Lcov(\M)$ if $[A,B]_{\G}=\T(\M)_X$. The following form of the Edelman--Walker theorem will be used repeatedly.

\begin{theorem}[{\cite[Theorem 2.2]{Edelman1985}; see also \cite[Theorem 4.4.2]{Bjorner1999}}]\label{thm:EW}
Let $A,B\in\T(\M)$. If $[A,B]_{\G}$ is facial with respect to $X\in\Lcov(\M)$, then
\[
(A,B)_{\G}\simeq S^{\rk z(X)-2}.
\]
Otherwise $(A,B)_{\G}$ is contractible.
\end{theorem}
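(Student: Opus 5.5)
Since this result is quoted from \cite{Edelman1985} and \cite{Bjorner1999}, the plan below is a reconstruction of the argument rather than a new one. The idea is to turn the interval $[A,B]_{\G}$ into a convex region of a (pseudo)sphere and read off the homotopy type of $(A,B)_{\G}$ geometrically.

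\emph{Step 1: describe the interval by sign conditions.} Since $d_{\G}=|\Sep_\M(\cdot,\cdot)|$, a tope $C$ lies on a minimal walk from $A$ to $B$ exactly when $\Sep_\M(A,C)\cup\Sep_\M(C,B)=\Sep_\M(A,B)$. Put $S=E\setminus\Sep_\M(A,B)$, the set of elements on which $A$ and $B$ agree. Then
\[
[A,B]_{\G}=\{C\in\T(\M)\mid C|_S=A|_S\},
\]
and $\prec$ is the order $C\preceq D\iff\Sep_\M(A,C)\subseteq\Sep_\M(A,D)$. In other words, $[A,B]_{\G}$ is the set of topes of the T-convex region $R$ cut out by the halfspaces $A|_S$, and $\prec$ is the tope order based at $A$ restricted to $R$.

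\emph{Step 2: pass to a cell complex.} Consider the regular cell complex of covectors $Y$ with $Y\circ A|_S$ consistent with $A$ on $S$. Its closure $\overline R$ is a ball, by the topological representation theorem, or directly by convexity in the realizable case. I would apply the Quillen fiber lemma to the map sending a face of $\overline R$ to the interval of topes above it. This should identify the order complex of $(A,B)_{\G}$ with the part of $\overline R$ that avoids the closed stars of the vertices corresponding to $A$ and to $B$. Concretely, one dualizes and uses that the set of topes above a face $Y$ is an interval $[Y\circ A,\,Y\circ B]$, which has a cone point.

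\emph{Step 3: the facial case.} Suppose $[A,B]_{\G}=\T_X$. Then $\T_X$ is canonically isomorphic to the tope set of the restriction $\M|z(X)$, which has rank $\rk z(X)$, and under this isomorphism $A$ and $B$ become antipodal. The region is then the whole sphere $S^{\rk z(X)-1}$. Removing open neighbourhoods of two antipodal points leaves an annulus $\simeq S^{\rk z(X)-2}$, which gives the claimed sphere.

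\emph{Step 4: the non-facial case.} Now $R$ is a proper convex region that is not a full face-star. I would show that the complement of the two stars in $\overline R$ is contractible. First try a sweeping or shelling argument: order the topes of $R$ by distance from $A$ and show that the stars of $A$ and $B$ within $\overline R$ do not together cover a whole boundary sphere. Then $\overline R$ minus these stars deformation retracts onto a ball.

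The main obstacle is this last step. One has to prove that when no covector $X$ satisfies $\T_X=[A,B]_{\G}$, the removed pieces leave no homology. I expect to handle it by induction on $|E|$, deleting an element of $S$ that defines a facet of $R$ and using the gluing lemma for contractible unions. This is essentially the strategy of \cite[Theorem 4.4.2]{Bjorner1999}.
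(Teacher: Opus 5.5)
\paragraph{Verdict.} The paper gives no proof of this statement; it is quoted from Edelman--Walker and Bj\"orner et al. Your sketch therefore has to stand on its own, and as written it does not: it has genuine gaps.

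\paragraph{Steps 1--3.} Step~1 is correct. The reduction in Step~3 to $\M|z(X)$ with $B\mapsto -A$ is sound; it is the localization $\rho_X$ of \Cref{chain-localization-special}. But both cases of the theorem then rest on Step~2, and Step~2 is not an argument.
\begin{itemize}
\item ``Face $\mapsto$ set of topes above it'' is not a poset map into $(A,B)_{\G}$. The natural map is $f(Y)=Y\circ A$ on $Q=\{Y\in\overline R\mid Y\not\le A,\ Y\not\le B\}$. The cone point of $\T_Y\cap R=[Y\circ A,Y\circ B]_{\G}$ only handles the trivial side.
\item Quillen's lemma needs $\{Y\in Q\mid Y\circ A\preceq C\}$ to be contractible for each $C\in(A,B)_{\G}$. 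That set is the closed region of $[A,C]_{\G}$ with the faces of $A$, and the common faces of $B$ and $C$, removed. This is a statement of exactly the kind Step~4 is supposed to prove.
\item The space you name is also wrong. One must delete the closed cells $\overline A,\overline B$ (the \emph{open} stars of the dual vertices), not closed stars. For three lines in $\R^2$ and topes $A,B$ at distance two, $(A,B)_{\G}$ is a single tope, whereas deleting the closed stars of the dual vertices $A$ and $B$ from $\overline R$ leaves nothing.
\end{itemize}

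\paragraph{Step 4.} This step carries the content of the theorem, and it is left open. The criterion you suggest cannot be the mechanism, because it does not distinguish facial from non-facial intervals.
\begin{itemize}
\item Take $[A,B]_{\G}=\T_X$ with $X\neq\zero$, for instance in rank~$3$ with $X$ a vertex on exactly three lines and $A,B$ opposite around $X$. Then $\overline R$ is the closed star of $X$, a disk, and $\overline A\cup\overline B$ covers only part of $\partial\overline R$.
\item Nevertheless $\overline R\setminus(\overline A\cup\overline B)$ has two components, as it must, since $(A,B)_{\G}\simeq S^0$. So ``not covering a boundary sphere'' does not give a retraction onto a ball.
\item The non-facial hypothesis has to enter through how $\overline A$ and $\overline B$ meet. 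In the facial case they share a face $X$ with $z(X)=\Sep(A,B)$, and this face pinches $\overline R$.
\end{itemize}
The proposed induction on $|E|$ is not straightforward either. In the three-line example, deleting the element of $S$ that bounds $R$ turns the contractible $(A,B)_{\G}$ into an $S^0$. You would need to control such jumps, and you give no gluing data for this.

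As it stands, both steps that actually prove the theorem are missing: the equivalence $(A,B)_{\G}\simeq\overline R\setminus(\overline A\cup\overline B)$, and the contractibility of the right-hand side in the non-facial case.
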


\subsection{Magnitude homology and cohomology}
See \cite{Hepworth2017,Hepworth2022} for details. Let $\X$ be a finite metric space. A proper $k$-chain in $\X$ is a tuple $\boldsymbol{x}=(x_0,x_1,\ldots,x_k)$ with $x_i\in \X$ and $x_{i-1}\neq x_i$ for every $i$. Its length is $\ell(\boldsymbol{x})=\sum_{i=1}^k d(x_{i-1},x_i)$.

Let $\kk$ be a ring. The magnitude chain group $\MC_{k,\ell}(\X;\kk)$ is the free $\kk$-module on the proper $k$-chains of length $\ell$. The differential is
\[
\partial=\sum_{i=1}^{k-1}(-1)^i\partial_i,
\]
where $\partial_i$ acting on $\boldsymbol{x}$ by deleting $x_i$ if $x_i$ is \emph{smooth}, that is,
\[
d(x_{i-1},x_{i+1})=d(x_{i-1},x_i)+d(x_i,x_{i+1}),
\]
and is zero otherwise. The homology of $(\MC_{*,\ell}(\X;\kk),\partial)$ is called the \emph{magnitude homology} of $\X$, denoted by $\MH_{*,\ell}(\X;\kk)$.

Taking $\Hom_\kk\bigl(\bullet,\kk\bigr)$ to $(\MC_{*,\ell}(\X;\kk),\partial)$, we obtain the magnitude cochain complex $(\MC^{*,\ell}(\X;\kk),\delta)$. The cohomology $\MH^{*,\ell}(\X;\kk)$ is called the \emph{magnitude cohomology} of $\X$. Magnitude cohomology is a stronger invariant than magnitude homology because it possess a product structure. For
\[
\varphi\in\MC^{p,\ell_1}(\X;\kk),\quad \psi\in\MC^{q,\ell_2}(\X;\kk),
\]
Hepworth's cup product $\varphi\smile\psi\in\MC^{p+q,\ell_1+\ell_2}(\X;\kk)$ is defined by
\begin{equation}\label{cup-chain}
(\varphi\smile\psi)(x_0,\ldots,x_{p+q})=\varphi(x_0,\ldots,x_p)\psi(x_p,\ldots,x_{p+q}).
\end{equation}
It descends to an associative unital product
\[
\MH^{p,\ell_1}(\X;\kk)\otimes\MH^{q,\ell_2}(\X;\kk)\xrightarrow{\smile}\MH^{p+q,\ell_1+\ell_2}(\X;\kk).
\]

\begin{proposition}[{\cite[Corollary 3.2]{Hepworth2022}}]\label{magcoh-is-complete}
If two finite metric spaces $\X$ and $\X'$ have $\MH^{*,*}(\X;\kk)\cong\MH^{*,*}(\X';\kk)$ as bigraded rings, then $\X$ and $\X'$ are isometric.
\end{proposition}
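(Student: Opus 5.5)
The plan is to reconstruct $\X$ from the bigraded ring $\MH^{*,*}(\X;\kk)$ in three steps. The point set will come from $\MH^{0,0}$, the "irreducible" distances from the bimodule structure of $\MH^{1,*}$ over $\MH^{0,0}$, and all distances from a shortest-chain formula. Throughout I assume $\kk$ is a nonzero commutative ring with no idempotents other than $0,1$ (for instance a field, or $\ZZ$). This is needed so that primitive idempotents of $\kk^{\X}$ correspond exactly to points.

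\emph{Step 1: points.} The proper $0$-chains of length $0$ are the singletons $(x)$. There are no proper $1$-chains of length $0$, and the differential on $\MC_{0,*}$ is zero. Hence $\MH^{0,0}(\X;\kk)=\MC^{0,0}(\X;\kk)=\kk^{\X}$. By \eqref{cup-chain} the cup product on it is the pointwise product $(\varphi\smile\psi)(x)=\varphi(x)\psi(x)$. Its primitive idempotents are the indicator functions $e_x$, $x\in\X$. A bigraded ring isomorphism restricts to a ring isomorphism $\kk^{\X}\cong\kk^{\X'}$ in bidegree $(0,0)$. It therefore induces a bijection $\sigma:\X\to\X'$ with $e_x\mapsto e_{\sigma(x)}$.

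\emph{Step 2: irreducible pairs.} Call a pair $x\neq z$ \emph{irreducible} if there is no $y\notin\{x,z\}$ with $d(x,z)=d(x,y)+d(y,z)$. The boundary of a $1$-chain is an empty sum, so every $1$-cochain is... more precisely, $\MH^{1,\ell}$ is the quotient of $\MC^{1,\ell}$ by the image of $\delta$ from degree $0$, which is zero, intersected with the cocycles. Dually, $\delta\varphi(x_0,x_1,x_2)=-\varphi(x_0,x_2)$ when $x_1$ is smooth, and $\delta\varphi$ vanishes otherwise. Hence
\[
\MH^{1,\ell}(\X;\kk)\cong\kk^{\{(x,z)\ \text{irreducible}:\ d(x,z)=\ell\}}.
\]
By \eqref{cup-chain} we have $(e_x\smile\varphi\smile e_z)(a,b)=e_x(a)\varphi(a,b)e_z(b)$. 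So $e_x\smile\MH^{1,\ell}\smile e_z\neq0$ if and only if $(x,z)$ is irreducible with $d(x,z)=\ell$. This condition is preserved by the isomorphism, so $\sigma$ preserves irreducible pairs together with their distances.

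\emph{Step 3: all distances.} I would show that
\[
d(x,z)=\min\Bigl\{\textstyle\sum_i d(x_{i-1},x_i)\Bigr\},
\]
where the minimum runs over chains $x=x_0,\dots,x_k=z$ all of whose consecutive pairs are irreducible. The inequality "$\le$" is the triangle inequality. For "$\ge$", start from the chain $(x,z)$ and repeatedly split any reducible consecutive pair $(a,b)$ at an intermediate point; each split preserves the total length, which stays $d(x,z)$. Every consecutive pair in a chain of total length $d(x,z)$ lies on a geodesic from $x$ to $z$. The main obstacle is showing that this splitting terminates. To do so I would argue that, within a chain of total length $d(x,z)$, the points are pairwise distinct, and that each split increases the chain length by one. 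Indeed, a repeat would create a sub-loop of positive length, contradicting minimality of the total. Since $\X$ is finite, the chain length is therefore bounded by $|\X|$, so the process must stop. Combined with Step 2, $\sigma$ preserves all distances and is the desired isometry.
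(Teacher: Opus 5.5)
Your argument is correct. The paper gives no proof of its own and only cites Hepworth, and your three steps are essentially the recovery argument behind that citation: primitive idempotents of $\MH^{0,0}(\X;\kk)=\kk^{\X}$ give the points, the $\MH^{0,0}$-bimodule structure of $\MH^{1,*}$ detects the irreducible pairs together with their distances, and the metric is the shortest-path metric over chains of irreducible pairs. Your standing hypothesis that $\kk$ be nonzero with only trivial idempotents restricts the literal statement, but some hypothesis of this kind is unavoidable (for $\kk=0$ every such ring vanishes), and it includes the case $\kk=\Ftwo$ that the paper needs.
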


Since the differential $\partial$ preserves the endpoints of a proper chain, the proper chains with a fixed source $a\in\X$ and a fixed terminal $b\in\X$ span a subcomplex of $\MC_{*,\ell}(\X;\kk)$, denoted by $\MC_{*,\ell}(\X;\kk)_{a\to b}$. Similarly, we write $\MH_{*,\ell}(\X;\kk)_{a\to b}$ for its homology, $\MC^{*,\ell}(\X;\kk)_{a\to b}$ and $\MH^{*,\ell}(\X;\kk)_{a\to b}$ for the cochain complex and cohomology with fixed endpoints respectively. Notations like $\MH_{*,\ell}(\X;\kk)_{\bullet\to b}$ are understood similarly.

\section{Weighted matroid optimization and initial matroids}\label{matroid-optimization}

Standard references for weighted matroid optimization, the greedy theorem, and the base-polytope viewpoint are \cite{Oxley2011,Schrijver2003}. For initial (oriented) matroids and their relation with Bergman fans, see \cite{Ardila2006,Rau2022,Shaw2026}.

\subsection{Weights and initial matroids}

We fix a matroid $M$ on $E$. A \emph{weight}\footnote{This should not be confused with the weighting function in the context of magnitude.} is a function $w:E\to\R$, identified with its coordinate vector $w=(w_e)_{e\in E}\in\R^E$. It assigns to a subset $S\subseteq E$ the total weight $w(S)=\sum_{e\in S}w_e$. A basis $\bb\in\B(M)$ is \emph{$w$-maximizing} if
\[
w(\bb)=\max_{\cc\in\B(M)}w(\cc).
\]
We denote the set of $w$-maximizing bases by $\B_w(M)$, or simply by $\B_w$ when $M$ is known. If $P\subseteq\R^E$ is a polytope, the \emph{face of $P$ exposed by $w$} is
\[
F_w(P)=\{x\in P \mid w\cdot x=h_P(w)\}, \quad h_P(w)=\max_{y\in P}w\cdot y,
\]
where $\cdot$ is the usual dot product in $\R^E$. Thus the hyperplane $w\cdot x=h_P(w)$ supports $P$, and $F_w(P)$ is the part of $P$ touched by that supporting hyperplane. Let
\[
P(M)=\conv\{\one_{\bb} \mid \bb\in\B(M)\}\subseteq\R^E
\]
be the matroid base polytope. Then the face of $P(M)$ exposed by $w$ is
\[
F_w(P(M))=\conv\{\one_{\bb} \mid \bb\in\B_w(M)\}.
\]
In particular, the vertices of the exposed face are exactly the characteristic vectors of the $w$-maximizing bases. Every face of a matroid base polytope is again a matroid base polytope. Consequently, $\B_w$ is the basis set of a matroid on $E$.

\begin{definition}\label{weight-initial-matroid}
The matroid whose bases are $\B_w$ is the \emph{$w$-initial matroid of $M$}, denoted by $M_w$. If $\M$ is oriented with chirotope $\chi$, its \emph{oriented $w$-initial matroid} $\M_w$, has chirotope
\[
\chi_w(\bb)=\begin{cases}
  \chi(\bb),&\bb\in\B_w,\\
  0,&\bb\not\in\B_w.
\end{cases}
\]
Thus the original orientation is retained on the maximizing bases and all other bases disappear. Moreover,
\[
P(M_w)=F_w(P(M)).
\]
In particular, $M_w$ and $\M_w$ depend only on the exposed face $F_w(P(M))$, equivalently on the maximizing-basis set $\B_w$, and not on the particular vector $w$.
\end{definition}

An element is a loop of $M_w$ exactly when it occurs in no $w$-maximizing basis. We also record a useful criterion for nonloops.

\begin{lemma}\label{nonloop}
Let $\bb\in\B_w$ and $e\not\in\bb$. Write $\C_M(e,\bb)$ for the fundamental circuit. Then $e$ is not a loop of $M_w$ if and only if there is an element $f\in \C_M(e,\bb)\setminus\{e\}$ such that $w_f=w_e$.
\end{lemma}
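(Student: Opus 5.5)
The plan is to reduce the statement to basis exchange, since $M_w$ is defined through its set of bases $\B_w$. By definition, $e$ is a nonloop of $M_w$ exactly when some $\cc\in\B_w$ contains $e$. Both directions will be compared against the fixed maximizing basis $\bb$ and the exchange description of the fundamental circuit recalled before \Cref{fund-circuit-lemma}: for $f\in\C_M(e,\bb)\setminus\{e\}$, the set $(\bb\setminus\{f\})\cup\{e\}$ is a basis.

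\emph{($\Leftarrow$)} Suppose $f\in\C_M(e,\bb)\setminus\{e\}$ satisfies $w_f=w_e$. Put $\cc=(\bb\setminus\{f\})\cup\{e\}$. This is a basis, and $w(\cc)=w(\bb)-w_f+w_e=w(\bb)$, so $\cc\in\B_w$. Since $e\in\cc$, the element $e$ is not a loop of $M_w$.

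\emph{($\Rightarrow$)} First, every $f\in\C_M(e,\bb)\setminus\{e\}$ satisfies $w_f\ge w_e$. Otherwise the exchanged basis $(\bb\setminus\{f\})\cup\{e\}$ would have weight strictly larger than $w(\bb)$, contradicting maximality.

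Now suppose, for contradiction, that $w_f>w_e$ for every such $f$, and that some $\cc\in\B_w$ contains $e$. The goal is to show that every basis containing $e$ has weight strictly less than $w(\bb)$.

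\begin{itemize}
\item Let $C=\C_M(e,\bb)$. Since $C$ is a circuit and $\cc$ is independent, $C\not\subseteq\cc$.
\item Pick $f\in C\setminus\cc$. Then $f\neq e$, so $f\in\bb$.
\item Apply symmetric basis exchange to $f\in\bb\setminus\cc$. This gives $g\in\cc\setminus\bb$ such that $(\bb\setminus\{f\})\cup\{g\}$ and $(\cc\setminus\{g\})\cup\{f\}$ are both bases.
\end{itemize}

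If $g=e$, then $\cc'=(\cc\setminus\{e\})\cup\{f\}$ is a basis with $w(\cc')=w(\cc)+w_f-w_e>w(\cc)$. This contradicts $\cc\in\B_w$.

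If $g\neq e$, the two exchanges give
\[
w(\bb)\ge w(\bb)-w_f+w_g \quad\text{and}\quad w(\cc)\ge w(\cc)-w_g+w_f .
\]
Hence $w_f=w_g$, and $\cc''=(\cc\setminus\{g\})\cup\{f\}$ is again $w$-maximizing and still contains $e$. Moreover $|\cc''\cap\bb|>|\cc\cap\bb|$.

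Iterating this replacement strictly increases $|\cc\cap\bb|$, so it must eventually land in the case $g=e$. That case gives the contradiction. Therefore some $f\in\C_M(e,\bb)\setminus\{e\}$ has $w_f=w_e$.

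The main obstacle is the iteration step. It needs the symmetric (simultaneous) exchange property and a careful choice of $f\in C\setminus\cc$. If the iteration proves awkward, I would instead argue via linear programming: a maximizing basis containing $e$ corresponds to a vertex of $F_w(P(M))$ with $x_e=1$. The edges of $P(M)$ at $\one_{\bb}$ are exactly the single exchanges, so some edge of the face $F_w(P(M))$ leaving $\one_{\bb}$ must increase $x_e$. Such an edge is an exchange $(\bb\setminus\{f\})\cup\{e\}$ of equal weight, which gives $w_f=w_e$ directly.
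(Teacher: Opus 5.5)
Your proof is correct, including the iteration. Each replacement keeps a $w$-maximizing basis containing $e$ and raises $|\cc\cap\bb|$. That count cannot exceed $\rk M-1$ since $e\in\cc\setminus\bb$, and once $\cc\setminus\bb=\{e\}$ the exchange is forced to return $g=e$.

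The paper uses the same ingredients (symmetric exchange plus maximality of both bases), but it applies the exchange to the element $e\in\cc\setminus\bb$ itself rather than to some $f\in\C_M(e,\bb)\setminus\cc$. This produces $f\in\bb\setminus\cc$ with both $(\bb\setminus\{f\})\cup\{e\}$ and $(\cc\setminus\{e\})\cup\{f\}$ bases. The first forces $f\in\C_M(e,\bb)$, and the two maximality inequalities give $w_f=w_e$ at once. There is no contradiction hypothesis, no preliminary bound $w_f\geq w_e$, and no loop. Exchanging from the $\bb$-side is exactly what made your iteration necessary, so the paper's version is the more economical one.

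Your polytope fallback is also sound. However, it rests on the Gelfand--Goresky--MacPherson--Serganova description of the edges of $P(M)$ together with a simplex-type argument on the face $F_w(P(M))$. That is much heavier machinery than this lemma needs.
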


\begin{proof}
If such $f$ exists, then $(\bb\setminus\{f\})\cup\{e\}$ is a basis of the same weight as $\bb$, so $e$ belongs to a basis of $M_w$. Conversely, let $\bb'\in\B_w$ contain $e$. By the strong basis-exchange property, there is $f\in\bb\setminus\bb'$ such that both $(\bb\setminus\{f\})\cup\{e\}$ and $(\bb'\setminus\{e\})\cup\{f\}$ are bases. The first exchange implies $f\in \C_M(e,\bb)$. Since $\bb$ and $\bb'$ are both $w$-maximizing, maximality applied to the two exchanged bases gives $w_f\geq w_e$ and $w_e\geq w_f$. Hence $w_f=w_e$.
\end{proof}

\subsection{Integral weights and $M$-admissibility}

Fix a matroid $M$ on $E$. For $w\in\NN^E$, define its \emph{upper level sets} by
\begin{equation}\label{upper-level-set}
U_j(w)=\{e\in E \mid w_e\geq j\}, \quad 1\leq j\leq m(w):=\max_{e\in E}w_e.
\end{equation}
They form a weakly decreasing sequence $U_1(w)\supseteq U_2(w)\supseteq\cdots \supseteq U_{m(w)}(w)$, and
\begin{equation}\label{layercake}
 w=\sum_{j=1}^{m(w)}\one_{U_j(w)}.
\end{equation}
Define a function $\kappa_M:\NN^E\to \NN$ by
\begin{equation}\label{kappa-def}
\kappa_M(w)=\sum_{j=1}^{m(w)}\rk_M U_j(w),\quad \kappa_M(\zero)=0.
\end{equation}

\begin{definition}
An integral weight $w\in\NN^E$ is \emph{$M$-admissible} if every nonempty $U_j(w)$ is a flat in $L(M)$.
\end{definition}

\begin{proposition}\label{kappa-support}
For every $w\in\NN^E$,
\[
\kappa_M(w)=\max_{\bb\in\B(M)}w(\bb).
\]
Consequently $\kappa_M$ is subadditive, that is, for $\alpha,\beta\in\NN^E$,
\[
\kappa_M(\alpha+\beta)\leq\kappa_M(\alpha)+\kappa_M(\beta),
\]
where equality holds if and only if $M$ has a basis which simultaneously maximizes $\alpha$ and $\beta$.
\end{proposition}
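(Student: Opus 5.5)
We prove the max-formula with a layer-cake decomposition, and then derive subadditivity and its equality case from it. First fix any basis $\bb\in\B(M)$. By \eqref{layercake},
\[
w(\bb)=\sum_{e\in\bb}w_e=\sum_{j=1}^{m(w)}|\bb\cap U_j(w)|.
\]
Since $\bb\cap U_j(w)$ is independent and contained in $U_j(w)$, we have $|\bb\cap U_j(w)|\leq\rk U_j(w)$. Summing over $j$ gives $w(\bb)\leq\kappa_M(w)$ for every basis.

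For the reverse inequality we construct a basis attaining every term simultaneously. This uses the chain structure $U_{m}\subseteq U_{m-1}\subseteq\cdots\subseteq U_1\subseteq E$, where we abbreviate $U_j=U_j(w)$ and $m=m(w)$. Choose a maximal independent subset $I_m$ of $U_m$. Extend it to a maximal independent subset $I_{m-1}\supseteq I_m$ of $U_{m-1}$, and continue down to $I_1\subseteq U_1$. Finally extend $I_1$ to a basis $\bb$ of $M$. This is possible because any independent subset of a set $S$ extends to a maximal independent subset of $S$, which has size $\rk S$.

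The extension at each stage only adds elements outside the smaller set, so it does not change the earlier intersections. Indeed, $\bb\cap U_j\supseteq I_j$ is independent in $U_j$, so its size is at most $\rk U_j=|I_j|$, which forces $\bb\cap U_j=I_j$. Hence $|\bb\cap U_j|=\rk U_j$ for all $j$, so $w(\bb)=\kappa_M(w)$. This is the greedy algorithm, and it is the only step requiring care. The case $w=\zero$ is trivial: both sides are $0$.

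Subadditivity now follows formally. Let $\bb$ be $(\alpha+\beta)$-maximizing. Then
\[
\kappa_M(\alpha+\beta)=(\alpha+\beta)(\bb)=\alpha(\bb)+\beta(\bb)\leq\kappa_M(\alpha)+\kappa_M(\beta).
\]
For the equality case, suppose a basis $\cc$ maximizes both $\alpha$ and $\beta$. Then
\[
\kappa_M(\alpha+\beta)\geq(\alpha+\beta)(\cc)=\kappa_M(\alpha)+\kappa_M(\beta),
\]
so equality holds. Conversely, suppose equality holds and take $\bb$ as above. Then $\alpha(\bb)+\beta(\bb)=\kappa_M(\alpha)+\kappa_M(\beta)$, while each summand satisfies $\alpha(\bb)\leq\kappa_M(\alpha)$ and $\beta(\bb)\leq\kappa_M(\beta)$. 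Hence both inequalities are equalities, and $\bb$ simultaneously maximizes $\alpha$ and $\beta$.
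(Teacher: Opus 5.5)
Your proposal is correct and follows essentially the same route as the paper: the layer-cake bound $w(\bb)=\sum_j|\bb\cap U_j(w)|\leq\kappa_M(w)$, then a basis built by successively extending a basis of $U_{m(w)}(w)$ up the chain of level sets to attain equality. You additionally write out the subadditivity and equality-case deductions (via an $(\alpha+\beta)$-maximizing basis), which the paper leaves as following immediately.
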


\begin{proof}
For every basis $\bb\in\B(M)$, \Cref{layercake} gives
\[
w(\bb)=\sum_{j\geq 1}|\bb\cap U_j(w)|\leq\sum_{j\geq 1}\rk U_j(w)=\kappa_M(w).
\]
The equality can be attained as follows. Since $E\supseteq U_1(w)\supseteq U_2(w)\supseteq\cdots \supseteq U_{m(w)}(w)$, start from a basis of $U_{m(w)}(w)$, extend it successively to a basis $\bb$ of $M$. Then $|\bb\cap U_j(w)|=\rk U_j(w)$ for every $j$, attaining the equality. The second claim then follows immediately.
\end{proof}

We say that a basis $\bb\in\B(M)$ \emph{saturates} $S\subseteq E$ if $\bb\cap S$ is a basis for $S$, that is $|\bb\cap S|=\rk_M S$.

\begin{definition}\label{w-tight}
A subset $S\subseteq E$ is \emph{$w$-tight} if every $w$-maximizing basis saturates $S$, in other words,
\[
|\bb\cap S|=\rk_M S\qquad\text{for every }\bb\in\B_w.
\]
Equivalently, the exposed face $F_w(P(M))$ is contained in the hyperplane $\sum_{e\in S}x_e=\rk_M S$.
\end{definition}

\subsection{Flag-initial oriented matroids}

Let $\M$ be an oriented matroid on $E$ with underlying matroid $M$. Let
\[
\F:\quad\varnothing=F_0\subsetneq F_1\subsetneq\cdots\subsetneq F_s=E
\]
be a strict flag of flats in $L(M)$. Its \emph{flag-initial oriented matroid} is defined by
\begin{equation}\label{initial-OM}
\M_{\F}=\bigoplus_{i=1}^s(\M|F_i)/F_{i-1}.
\end{equation}
Geometrically, this is the associated graded normal arrangement along the flag. The $i$-th summand records the new normal directions contributed by $F_i\setminus F_{i-1}$. For this flag $\F$, let
\[
\T(\M)_\F=\{B\in\T(\M) \mid B\setminus F_i\in\Lcov(\M)\text{ for every }i\}
\]
the set of topes of $\M$ incident to every flat of the flag, see \Cref{backslash-reversal} for notations. Then it is known (see \cite[Lemma 3.1]{Rau2022} and \cite[Proposition 3.5]{Shaw2026}) that
\begin{equation}\label{initial-topes}
\T(\M_{\F})=\T(\M)_{\F}.
\end{equation}
The next lemma identifies the weight-initial and flag-initial constructions for the special integral weights used in this paper.

\begin{lemma}\label{admissible-initial-loopless}
Let $w\in\NN^E$ be $M$-admissible. Remove repetitions from its nonempty level sets $U_j(w)$, list them in increasing order, and append the endpoints when necessary to form a strict flag of flats
\[
\F_w:\quad \varnothing=F_0\subsetneq F_1\subsetneq\cdots\subsetneq F_s=E.
\]
Then
\[
\M_w=\bigoplus_{i=1}^s(\M|F_i)/F_{i-1}=\M_{\F_w}.
\]
In particular, $M_w$ is loopless and
\[
\T(\M_w)=\{B\in\T(\M) \mid B\setminus F_i\in\Lcov(\M)\text{ for all }i\}.
\]
\end{lemma}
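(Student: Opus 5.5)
The strategy is to compare chirotopes. By \Cref{weight-initial-matroid}, $\M_w$ is determined by the set $\B_w$ of $w$-maximizing bases, with orientation $\chi$ restricted to $\B_w$. So it suffices to show two things. First, $\B_w$ coincides with the set of bases of $M_{\F_w}=\bigoplus_i (M|F_i)/F_{i-1}$. Second, on these bases the chirotope of the direct sum agrees with $\chi$, up to a global sign.

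For the first point, use the proof of \Cref{kappa-support}. A basis $\bb$ attains $w(\bb)=\kappa_M(w)$ if and only if $|\bb\cap U_j(w)|=\rk U_j(w)$ for every $j$. Equivalently, $\bb$ saturates every flat $F_i$ of the flag. Saturating every $F_i$ means $\bb\cap F_1$ is a basis of $F_1$, and each $\bb\cap(F_i\setminus F_{i-1})$ is a basis of $(M|F_i)/F_{i-1}$. The contraction of $M|F_i$ by $F_{i-1}$ has rank $\rk F_i-\rk F_{i-1}$, and a set $T\subseteq F_i\setminus F_{i-1}$ is a basis there exactly when $T\cup(\bb\cap F_{i-1})$ is a basis of $F_i$. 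This gives the converse direction as well: a union of bases of the graded pieces is a basis of $M$ saturating every $F_i$. The basis sets therefore agree. Admissibility matters only so that the $F_i$ are flats and the summands are genuine minors; the counting argument itself works for any chain.

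For the second point, order each basis compatibly with the flag. The chirotope of $(\M|F_i)/F_{i-1}$ is given by $\chi_i(T)=\chi_{\M|F_i}(T,\, I_{i-1})$ for a fixed ordered basis $I_{i-1}$ of $F_{i-1}$, up to a sign that depends on that choice. The direct-sum chirotope is the product $\prod_i\chi_i(T_i)$. Now fix reference bases $I_1\subseteq I_2\subseteq\cdots$ along the flag. By iterating the Grassmann--Plücker/exchange compatibility, one shows
\[
\chi(T_1,\dots,T_s)=\pm\prod_i \chi_i(T_i)
\]
with a sign independent of the $T_i$. This is where I expect the main obstacle. I would handle it realizably, since $\A$ is a real arrangement. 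Choose a basis of $W$ adapted to the filtration $W_{F_1}\subseteq W_{F_2}\subseteq\cdots$, where $W_S=\operatorname{span}\{\varphi_e\mid e\in S\}$. The wedge $\bigwedge_i\bigwedge T_i$ is then block triangular, so its determinant factors as the product of the determinants of the graded blocks $W_{F_i}/W_{F_{i-1}}$. Those block determinants are exactly the chirotopes realizing the contractions $(\M|F_i)/F_{i-1}$. Thus $\chi_w$ equals the chirotope of $\M_{\F_w}$, possibly up to a global sign, which is harmless.

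The remaining statements are routine consequences. Looplessness of $M_w$ holds because each $(M|F_i)/F_{i-1}$ is loopless: an $e\in F_i\setminus F_{i-1}$ that was a loop would lie in $\cl(F_{i-1})=F_{i-1}$, since $F_{i-1}$ is a flat, which is a contradiction. This is the one place admissibility is essential. The description of the topes then follows directly from \Cref{initial-topes} applied to the flag $\F_w$.
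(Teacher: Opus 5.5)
Your proof is correct and follows essentially the same route as the paper. You characterize $\B_w$ as the bases saturating every $F_i$; the paper does this by writing $w=d_s\one_E+\sum_{i<s}(d_i-d_{i+1})\one_{F_i}$ with positive coefficients, which is the same layer-cake observation you take from \Cref{kappa-support}. Your block-triangular determinant computation in a basis adapted to $W_{F_1}\subseteq W_{F_2}\subseteq\cdots$ is the coordinate form of the paper's canonical isomorphism $\det V\cong\bigotimes_i\det(W_i/W_{i-1})$, and looplessness and the tope description are deduced exactly as in the paper.
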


\begin{proof}
Put $S_i=F_i\setminus F_{i-1}$ and $r_i=\rk F_i-\rk F_{i-1}$. The weight $w$ is constant on each $S_i$, with value denoted by $d_i$. Then $d_1>d_2>\cdots>d_s\geq 0$ and
\[
w=d_s\one_E+\sum_{i=1}^{s-1}(d_i-d_{i+1})\one_{F_i}.
\]
Thus for every basis $\bb\in\B(M)$,
\[
w(\bb)=d_s\rk M+ \sum_{i=1}^{s-1}(d_i-d_{i+1})|\bb\cap F_i|.
\]
Since $|\bb\cap F_i|\leq\rk F_i$ and all coefficients $d_i-d_{i+1}$ are positive, the basis $\bb$ is $w$-maximizing if and only if $|\bb\cap F_i|=\rk F_i$ for all $i$. Equivalently, putting $\bb_i=\bb\cap S_i$, each $\bb_i$ is a basis of $(M|F_i)/F_{i-1}$. It follows that
\[
M_w=\bigoplus_{i=1}^s(M|F_i)/F_{i-1}.
\]
Each summand $(M|F_i)/F_{i-1}$ is loopless because $F_{i-1}$ is a flat of $M|F_i$. Hence $M_w$ is loopless.

It remains to compare the orientations. Let $(v_e)_{e\in E}\subseteq V$ be a vector configuration realizing $\M$, and put $W_i=\operatorname{span}\{v_e\mid e\in F_i\}$. For $e\in S_i$, let $\bar v_e=v_e+W_{i-1}\in W_i/W_{i-1}$. The configuration $(\bar v_e)_{e\in S_i}$ realizes $(\M|F_i)/F_{i-1}$, so their direct sum in $\bigoplus_{i=1}^sW_i/W_{i-1}$ realizes $\M_{\F_w}$.

If $\bb$ is not $w$-maximizing, then it is not a basis of this direct sum, so both $\chi_w(\bb)$ and $\chi_{\M_{\F_w}}(\bb)$ vanish. If $\bb$ is $w$-maximizing, order it blockwise as $\bb=(\bb_1,\ldots,\bb_s)$. Under the canonical determinant-line isomorphism $\det V\cong\bigotimes_{i=1}^s\det(W_i/W_{i-1})$, one has
\[
\bigwedge_{e\in\bb_1}v_e\wedge\cdots\wedge\bigwedge_{e\in\bb_s}v_e\longmapsto\bigotimes_{i=1}^s\bigwedge_{e\in\bb_i}\bar v_e.
\]
After transporting the orientation through this isomorphism, the two determinants have the same sign. Hence
\[
\chi_{\M_{\F_w}}(\bb)=\chi_{\M}(\bb)=\chi_w(\bb).
\]
Thus $\M_w=\M_{\F_w}$. Finally the description of the topes follows from \eqref{initial-topes}.
\end{proof}

\begin{example}\label{weight-U23}
Let $M=U_{2,3}$ be the uniform matroid of rank $2$ on $\{1,2,3\}$ and $w=(2,1,1)$. The three bases have weights
\[
w(\{1,2\})=3,\quad w(\{1,3\})=3,\quad w(\{2,3\})=2.
\]
Thus $M_w$ has bases $\{1,2\}$ and $\{1,3\}$. Equivalently, element $1$ is a coloop and $2,3$ form a parallel pair,
\[
M_w\cong U_{1,\{1\}}\oplus U_{1,\{2,3\}}.
\]
Since $w$ is $U_{2,3}$-admissible, this is also the underlying matroid of the flag-initial oriented matroid for $\varnothing\subset\{1\}\subset E$.
\end{example}

\subsection{The tope graph of an initial oriented matroid}
\label{initial-tope-graph}

For a flag of flats $\F$, the initial-tope description \eqref{initial-topes} gives a canonical inclusion $\T(\M_{\F})\hookrightarrow\T(\M)$. It is the identity on sign vectors. In general, this does \emph{not} identify $\G(\M_{\F})$ with the subgraph of $\G(\M)$ induced by $\T(\M_{\F})$.

The direct sum definition of the initial oriented matroid gives
\[
\T(\M_{\F})\cong\prod_{i=1}^s\T\bigl((\M|F_i)/F_{i-1}\bigr)
\]
and at the graph level,
\[
\G(\M_{\F})\cong\mathop{\square}_{i=1}^s\G\bigl((\M|F_i)/F_{i-1}\bigr),
\]
where $\square$ denotes the Cartesian product of graphs. An edge of $\G(\M_{\F})$ can reverse the signs on an entire rank-one flat of the initial matroid. That flat may contain several elements of the original ground set. Its two endpoints are then separated in the original arrangement by every element of this block and need not be adjacent in $\G(\M)$.

For a complete flag
\[
\F:\quad \varnothing=F_0\subsetneq F_1\subsetneq\cdots\subsetneq F_r=E,\quad S_i=F_i\setminus F_{i-1},
\]
each $S_i$ is the ground set of a rank-one direct summand $(\M|F_i)/F_{i-1}$ of $\M_{\F}$. Consequently, $\G(\M_{\F})\cong Q_r$ the $r$-dimensional hypercube graph. After choosing one incident tope $A\in\T(\M_\F)$, write
\[
A_J=\eps_{\bigcup_{i\in J}S_i}A,\quad J\subseteq[r],
\]
the tope opposite to $A$ with respect to $\bigcup_{i\in J}S_i$ (see notations \Cref{backslash-reversal}). The two relevant metrics on this common vertex set are
\[
d_{\G(\M_{\F})}(A_J,A_K)=|J\mathbin{\triangle}K|,\quad d_{\G(\M)}(A_J,A_K)=\sum_{i\in J\mathbin{\triangle}K}|S_i|,
\]
where $\mathbin{\triangle}$ is the symmetric difference. Thus the same set of topes is an ordinary cube for $\G(\M_\F)$ and a weighted cube for $\G(\M)$. In particular, an edge of $\G(\M_\F)$ in the $i$-th coordinate has ambient length $|S_i|$.

\begin{figure}[!ht]
\centering
\begin{minipage}[c]{0.30\textwidth}
\centering
\begin{tikzpicture}[scale=0.72]
  \draw[->] (-2.2,0)--(2.2,0) node[right] {$H_2$};
  \draw[->] (0,-2.2)--(0,2.2) node[above] {$H_1$};
  \draw (-1.8,1.8)--(1.8,-1.8) node[below right] {$H_3$};
  \node at (0.75,0.75) {${+++}$};
  \node at (-0.75,1.20) {${-++}$};
  \node at (-1.25,0.35) {${-+-}$};
  \node at (-0.75,-0.75) {${---}$};
  \node at (0.70,-1.20) {${+--}$};
  \node at (1.25,-0.35) {${+-+}$};
\end{tikzpicture}
\end{minipage}\hfill
\begin{minipage}[c]{0.30\textwidth}
\centering
\begin{tikzpicture}[scale=0.82,every node/.style={font=\scriptsize}]
  \foreach \i/\lab in {0/+++ ,1/-++ ,2/-+- ,3/--- ,4/+-- ,5/+-+}{
    \coordinate (v\i) at ({90+60*\i}:1.55);
    \fill (v\i) circle (1.8pt);
    \node at ({90+60*\i}:1.95) {$\lab$};
  }
  \foreach \i in {0,...,5}{\pgfmathtruncatemacro{\j}{mod(\i+1,6)}\draw (v\i)--(v\j);}
\end{tikzpicture}
\end{minipage}\hfill
\begin{minipage}[c]{0.35\textwidth}
\centering
\begin{tikzpicture}[x=1.25cm,y=1.05cm,every node/.style={font=\scriptsize}]
  \node (a) at (0,1) {$+++$};
  \node (b) at (1,1) {$-++$};
  \node (c) at (0,0) {$+--$};
  \node (d) at (1,0) {$---$};
  \draw (a)--node[above] {$S_1=\{1\}$}(b);
  \draw (c)--(d);
  \draw[dashed] (a)--node[left] {$S_2=\{2,3\}$}(c);
  \draw[dashed] (b)--(d);
\end{tikzpicture}
\end{minipage}
\caption{The arrangement $\A=\{x=0,y=0,x+y=0\}$; its tope graph $\G(\M)$, a six-cycle; and its initial tope graph $\G(\M_\F)$ for the complete flag $\F:\varnothing\subset\{1\}\subset E$. A dashed side reverses the block $S_2=\{2,3\}$ and therefore has ambient tope distance two.}
\label{three-line-initial}
\end{figure}

\begin{example}\label{three-line-minors}
Consider the arrangement $\A=\{H_1,H_2,H_3\}$ defined by $\varphi_1=x$, $\varphi_2=y$, and $\varphi_3=x+y$. The oriented matroid $\M$ has underlying matroid $U_{2,3}$. For the flag of flats
\[
\F:\quad \varnothing\subset\{1\}\subset E,
\]
with $S_1=\{1\}$ and $S_2=\{2,3\}$. The initial oriented matroid is
\[
\M_{\F}=(\M|\{1\})\oplus(\M/\{1\}).
\]
In the contraction summand, the images of $\varphi_2$ and $\varphi_3$ coincide, so $2$ and $3$ become positively parallel. Then $\T(\M_{\F})=\{(+++),(-++),(+--),(---)\}$. The initial tope graph is $\G(\M_{\F})\cong Q_2$, the Boolean square on the right of \Cref{three-line-initial}. The two $S_2$-labeled edges have length two in the ambient graph $\G(\M)$ and are absent from the subgraph induced on these four vertices. That induced subgraph is therefore the disjoint union of two edges, whereas $\G(\M_{\F})$ is a square. Thus $\T(\M_{\F})$ is canonically a subset of $\T(\M)$, whereas $\G(\M_{\F})$ is in general not a subgraph of $\G(\M)$.
\end{example}

\subsection{Sorting subset sequences}

For the ground set $E$, consider a sequence of subsets $\bS=(S_1,\ldots,S_m)$. An elementary sorting move on $\bS$ is to choose an adjacent pair $(S_i,S_{i+1})$ of subsets with $S_i\not\supseteq S_{i+1}$ and perform the replacement
\begin{equation}\label{sorting-move}
(S_i,S_{i+1})\longmapsto(S_i\cup S_{i+1},S_i\cap S_{i+1}).
\end{equation}

We first have an elementary observation.
\begin{lemma}\label{set-sorting}
Repeated application of \eqref{sorting-move} to $\bS=(S_1,\ldots,S_m)$ terminates. If $w=\sum_j\one_{S_j}$, then the terminal sequence (with empty sets in the tail omitted) is the nested upper level sets \eqref{upper-level-set} of $w$,
\[
U_1(w)\supseteq U_2(w)\supseteq\cdots\supseteq U_{m(w)}(w).
\]
\end{lemma}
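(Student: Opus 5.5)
The plan is to prove two things: that the sorting process terminates, and that the sum $w=\sum_j\one_{S_j}$ is an invariant of it. Together these identify the terminal sequence.

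\emph{Invariance.} For any subsets $S,T$ we have $\one_{S\cup T}+\one_{S\cap T}=\one_S+\one_T$. Hence an elementary move \eqref{sorting-move} does not change $w=\sum_j\one_{S_j}$. It also does not change the length $m$ of the sequence, provided we keep empty sets in place.

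\emph{Termination.} I would use a potential function. The natural choice is
\[
\Phi(\bS)=\sum_{j=1}^m (m-j)\,|S_j|,
\]
which weights earlier positions more heavily. A move at positions $(i,i+1)$ replaces $|S_i|$ by $|S_i\cup S_{i+1}|=|S_i|+k$, where $k=|S_{i+1}\setminus S_i|$. It replaces $|S_{i+1}|$ by $|S_i\cap S_{i+1}|=|S_{i+1}|-k$. The condition $S_i\not\supseteq S_{i+1}$ says exactly that $k\ge 1$. So $\Phi$ changes by
\[
(m-i)k-(m-i-1)k=k\ge1 .
\]
Since $\Phi$ is a nonnegative integer bounded by $m^2|E|$, only finitely many moves are possible. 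The exact choice of potential is the only point needing care.

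\emph{Identification of the terminal sequence.} When no move applies, every adjacent pair satisfies $S_i\supseteq S_{i+1}$, so the sequence $T_1\supseteq\cdots\supseteq T_m$ is weakly decreasing. Its sum is still $w$. For a nested chain, $e\in T_j$ implies $e\in T_1,\ldots,T_j$. Therefore $w_e=\max\{j: e\in T_j\}$, with the convention that this is $0$ if $e\notin T_1$. It follows that $T_j=\{e: w_e\ge j\}=U_j(w)$ for every $j$.

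In particular $T_j$ is nonempty exactly for $j\le m(w)$, and $T_j=\varnothing$ for $m(w)<j\le m$. Note that $m(w)\le m$, because each $w_e$ counts the $S_j$ containing $e$. Omitting the empty tail therefore gives exactly $U_1(w)\supseteq\cdots\supseteq U_{m(w)}(w)$.

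The result does not depend on which moves were chosen, since it is determined by $w$ alone.
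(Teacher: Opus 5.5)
Your proof is correct and follows the paper's argument. The paper uses the potential $\nu(\bS)=\sum_j j|S_j|$, which strictly decreases under each move; since $\sum_j|S_j|=\sum_e w_e$ is invariant, your $\Phi=m\sum_j|S_j|-\nu$ is the same potential up to sign and an additive constant, and your invariance of $w$ and identification $T_j=U_j(w)$ via nestedness match the paper exactly.
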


\begin{proof}
For a sequence $\bS=(S_1,\ldots,S_m)$, define the nonnegative integer
\[
\nu(S_1,\ldots,S_m)=\sum_{j=1}^m j|S_j|.
\]
Suppose $S_i\not\supseteq S_{i+1}$. After performing the sorting move for $(S_i,S_{i+1})$, the change in $\nu$ is
\[
i|S_i\cup S_{i+1}|+(i+1)|S_i\cap S_{i+1}|-i|S_i|-(i+1)|S_{i+1}|=-|S_{i+1}\setminus S_i|<0.
\]
Hence $\nu$ strictly decreases at every move, and the process terminates. At a terminal sequence no adjacent pair is eligible for sorting. Therefore it is nested
\[
H_1\supseteq H_2\supseteq\cdots\supseteq H_m.
\]
The identity $\one_{S_i}+\one_{S_{i+1}}=\one_{S_i\cup S_{i+1}}+\one_{S_i\cap S_{i+1}}$ shows that every sorting move preserves the vector $w=\sum_j\one_{S_j}$. For a fixed element $e\in E$, nestedness implies that
\[
e\in H_j\iff w_e\geq j,
\]
and hence $H_j=U_j(w)$.
\end{proof}

\subsection{The tight-flat lattice and tope incidence}

Let $F_1,\ldots,F_m$ be flats of $M$ and $w=\sum_{i=1}^m\one_{F_i}\in\NN^E$. By \Cref{kappa-support}, for any $\bb\in\B_w$,
\[
\kappa_M(w)=w(\bb)=\sum_{i=1}^m|\bb\cap F_i|\leq\sum_{i=1}^m\rk F_i.
\]
If we assume $\kappa_M(w)=\sum_{i=1}^m\rk F_i$, then 
\begin{equation}\label{flat-is-tight}
|\bb\cap F_i|=\rk F_i,\quad \text{ for every } i,
\end{equation}
meaning that $F_i$ is $w$-tight (\Cref{w-tight}). The next proposition says much more. Provided $M_w$ has no loops, $w$-tightness extends through unions and intersections. The resulting sets remain flats of the original matroid, and inside $M_w$ they are separators. We adjoin $\varnothing$ and $E$ whenever we speak of the lattice generated by a family of subsets.

\begin{proposition}\label{tight-flat-lattice}
Let $F_1,\ldots,F_m$ be flats of $M$, put $w=\sum_{i=1}^m\one_{F_i}$. We assume that $M_w$ is loopless, and
\begin{equation}\label{tight-degree}
\kappa_M(w)=\sum_{i=1}^m\rk F_i.
\end{equation}
Let $\D$ be the smallest family of subsets of $E$ containing $F_1,\ldots,F_m,\varnothing,E$ and closed under union and intersection. Then,
\begin{enumerate}[label=\textup{(\roman*)}]
\item every $S\in\D$ is $w$-tight;
\item every $S\in\D$ is a flat of $M$;
\item every $S\in\D$ is a separator of $M_w$;
\item every pair $S,T\in\D$ is a closed modular pair:
\[
\rk S+\rk T=\rk(S\cup T)+\rk(S\cap T),\quad S\cup T\text{ is a flat}.
\]
\end{enumerate}
\end{proposition}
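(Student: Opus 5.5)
Throughout, write $\B_w$ for the set of $w$-maximizing bases and $M_w$ for the $w$-initial matroid of \Cref{weight-initial-matroid}; its bases are exactly $\B_w$.

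\emph{Plan for (i).} First observe that tightness is preserved by unions and intersections of tight sets. Let $S,T$ be $w$-tight and $\bb\in\B_w$. Submodularity and the independence of $\bb$ give
\[
\rk S+\rk T=|\bb\cap S|+|\bb\cap T|=|\bb\cap(S\cup T)|+|\bb\cap(S\cap T)|\leq\rk(S\cup T)+\rk(S\cap T)\leq\rk S+\rk T .
\]
Hence equality holds throughout. Since each of $|\bb\cap(S\cup T)|\leq \rk(S\cup T)$ and $|\bb\cap(S\cap T)|\leq\rk(S\cap T)$ is an individual inequality, both are equalities. So $S\cup T$ and $S\cap T$ are tight, and the pair $(S,T)$ is modular.

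The generators $F_i$ are tight by \eqref{flat-is-tight}. The sets $\varnothing$ and $E$ are trivially tight. Induction over the construction of $\D$ then proves (i), and also the modular half of (iv).

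\emph{Plan for (iii).} Let $S\in\D$. It suffices to show that no circuit of $M_w$ meets both $S$ and $E\setminus S$. Since $S$ is tight, $|\bb\cap S|=\rk_M S$ for every basis $\bb$ of $M_w$. Therefore $\rk_{M_w}S\geq\rk_M S$, and $\rk_{M_w}S\leq \rk_M S$ holds because every independent set of $M_w$ is independent in $M$. So $\rk_{M_w}S=\rk_M S$, and every basis of $M_w$ meets $S$ in exactly $\rk_{M_w}S$ elements.

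I then use a standard fact: a set $S$ is a separator of $N$ if and only if every basis of $N$ meets $S$ in $\rk_N S$ elements. I would prove it directly. Pick $\bb\in\B_w$ whose restriction $\bb\cap(E\setminus S)$ is a basis of $E\setminus S$ in $M_w$, by extending a basis of $E\setminus S$. Then $\rk_{M_w}E=|\bb|=|\bb\cap S|+|\bb\setminus S|=\rk_{M_w}S+\rk_{M_w}(E\setminus S)$, which is the separator equation. This gives (iii).

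\emph{Plan for (ii), the main obstacle.} Here I must show that $S\in\D$ is closed in $M$. Suppose instead that $e\in\cl_M(S)\setminus S$.
\begin{itemize}
\item Choose $\bb\in\B_w$. Then $I=\bb\cap S$ is a basis of $S$, so $\cl_M(I)=\cl_M(S)\ni e$.
\item The element $e$ is not a loop of $M$. If $e\notin\bb$, \Cref{fund-circuit-lemma} gives $\C_M(e,\bb)\subseteq I\cup\{e\}$. If $e\in\bb$, then $I\cup\{e\}\subseteq\bb$ is independent, contradicting $e\in\cl_M(I)$. So $e\notin\bb$.
\item Since $M_w$ is loopless, \Cref{nonloop} gives $f\in\C_M(e,\bb)\setminus\{e\}\subseteq I\subseteq S$ with $w_f=w_e$. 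Then $\bb'=(\bb\setminus\{f\})\cup\{e\}$ is again $w$-maximizing.
\item But $|\bb'\cap S|=|\bb\cap S|-1<\rk S$, contradicting tightness.
\end{itemize}
Hence every $S\in\D$ is a flat, which proves (ii). The remaining part of (iv), that $S\cup T$ is a flat, follows because $S\cup T\in\D$.

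The delicate point is combining looplessness with the fundamental-circuit lemma in the correct order, and checking the case $e\in\bb$. Note also that (ii) for unions is genuinely nontrivial: a union of flats is not a flat in general, and it is tightness that forces closedness.
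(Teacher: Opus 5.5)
Your proof is correct. Part (iii) is essentially the paper's argument, but for (i) and (ii) you take a different route.

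The paper introduces the membership preorder $a\preceq b$ (every $F_i$ containing $a$ also contains $b$), which every member of $\D$ respects. It shows that for $\bb\in\B_w$ and $e\notin\bb$, every $f\in\C_M(e,\bb)\setminus\{e\}$ satisfies $e\preceq f$. From this it deduces that $\bb\cap S$ spans $S$, which gives (i). For (ii) it gets the strict inequality $w_f>w_e$ on the whole fundamental circuit of a putative $e\in\cl_M(S)\setminus S$, so $e$ is a loop of $M_w$ by \Cref{nonloop}. The modular equality in (iv) is then read off by applying (i) to $S$, $T$, $S\cup T$ and $S\cap T$.

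You replace the preorder with the classical uncrossing argument: submodularity together with $|\bb\cap X|\le\rk X$ shows that $w$-tight sets are closed under union and intersection. This gives (i) from the tightness of the generators, and the modular equality of (iv) falls out of the same computation. Your (ii) uses only that $S$ is $w$-tight and that $M_w$ is loopless. It therefore proves the more general fact that, for an arbitrary weight $w$ with $M_w$ loopless, every $w$-tight set is a flat of $M$. The paper's argument, by contrast, relies on the specific form $w=\sum_i\one_{F_i}$ and on $S$ being generated from the $F_i$.

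What the paper's approach buys is a single combinatorial mechanism, control of fundamental circuits via $\preceq$, that handles (i) and (ii) at once. Yours separates the roles of the hypotheses more cleanly: \eqref{tight-degree} enters only through tightness of the generators, and looplessness enters only in (ii).

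Your (ii) can also be shortened. Since $M_w$ is loopless, you may pick $\bb\in\B_w$ containing $e$ from the start. Your case ``$e\in\bb$'' then already gives the contradiction, so the appeal to \Cref{nonloop} and the exchange step are unnecessary.
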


This proposition is the technical core of the our argument. The impact of these conclusions will be clear as the argument proceeds. Part~(ii) ensures that every set created by sorting still indexes a single-flat cohomology class. Part~(i) keeps the degree unchanged. Part~(iii) allows independent sign reversal on the set inside the initial oriented matroid. Part~(iv) is exactly the rank equality needed to apply the local modular relation.

\begin{proof}
(i) For $a,b\in E$, write
\begin{equation}\label{membership-preorder}
a\preceq b \iff a\in F_i \text{ implies } b\in F_i \text{ for every }i.
\end{equation}
This relation compares only the membership patterns of $a$ and $b$ in the original flats $F_i$. Since $\D$ is generated from the $F_i$ using unions and intersections, an elementary induction gives
\begin{equation}\label{membership-transfer}
a\preceq b,\quad a\in S,\quad S\in\D\quad\Longrightarrow\quad b\in S.
\end{equation}
Indeed, the implication is immediate for $S=F_i$, $S=\varnothing$, and $S=E$, and it is preserved when two sets satisfying it are replaced by their union or their intersection.

Take $S\in\D$ and $\bb\in\B_w$. We first show that $\bb\cap S$ spans $S$. Let $e\in S\setminus\bb$, and put $\C=\C_M(e,\bb)$. We claim that $e\preceq f$ for every $f\in\C\setminus\{e\}$. Indeed, fix $i$. If $e\notin F_i$, there is nothing to prove. If $e\in F_i$, then $\bb\cap F_i$ is a basis of the flat $F_i$ by \eqref{flat-is-tight}, and hence it spans $e$. Apply \Cref{fund-circuit-lemma} for $I=\bb\cap F_i$ and we obtain $\C\subseteq(\bb\cap F_i)\cup\{e\}$, so $f\in F_i$. Thus $e\preceq f$. Since $e\in S$ and $S\in\mathcal D$, implication \eqref{membership-transfer} gives $f\in S$. Therefore,
\[
\C_M(e,\bb)\setminus\{e\}\subseteq\bb\cap S,
\]
and hence $e\in\cl_M(\bb\cap S)$. This holds for every $e\in S\setminus\bb$, so $\bb\cap S$ spans $S$. It is independent because it is contained in the basis $\bb$, and hence it is a basis of $S$, that is $\rk S=|\bb\cap S|$.

(ii) We next prove that $S$ is a flat. Suppose that $e\in\cl_M(S)\setminus S$. Since $\bb\cap S$ is a basis of $S$, one has $e\notin\bb$ and again by \Cref{fund-circuit-lemma} for $I=\bb\cap S$,
\[
\C_M(e,\bb)\setminus\{e\}\subseteq\bb\cap S.
\]
For every $b$ in this fundamental circuit, the argument above again gives $e\preceq b$. Moreover, $b\in S$ whereas $e\notin S$. Their membership patterns in the $F_i$ therefore cannot be equal. Hence there is at least one index $i$ such that $e\notin F_i$ and $b\in F_i$. Since $e\preceq b$, it follows that
\[
w_b=\sum_{i=1}^m\one_{F_i}(b)>\sum_{i=1}^m\one_{F_i}(e)=w_e, \quad \text{for every }b\in \C_M(e,\bb)\setminus\{e\}.
\]
By \Cref{nonloop}, $e$ is a loop of $M_w$, contrary to the hypothesis that $M_w$ is loopless. Therefore $S=\cl_MS$.

(iii) Put $N=M_w$. Recall that $\B(N)=\B_w\subseteq\B(M)$. Hence every $N$-independent set is $M$-independent, and we have $\rk_N S\leq\rk_M S$ for $S\subseteq E$.

Fix $S\in\D$ and an $N$-basis $\bb\in\B_w$. We proved in (i) that $|\bb\cap S|=\rk_M S$. On the other hand, $\bb\cap S$ is $N$-independent because it is contained in the $N$-basis $\bb$. Therefore $\rk_N S\geq|\bb\cap S|=\rk_M S$. Together with the reverse inequality, this gives $\rk_N S=\rk_M S$. In particular, every $N$-basis $\bb\in\B_w$ satisfies
\begin{equation}\label{saturation-on-N}
|\bb\cap S|=\rk_N S. 
\end{equation}
We claim that
\begin{equation}\label{S-is-separator}
\rk_N(E\setminus S)=\rk_N E-\rk_N S.    
\end{equation}
The inequality $\geq$ follows by taking the complement of $\bb\cap S$ in any $N$-basis $\bb$.  If the inequality were strict, there would be an $N$-independent set $I\subseteq E\setminus S$ with $|I|>\rk_N E-\rk_N S$. Extending $I$ to an $N$-basis $\bb'$ would give
\[
|\bb'\cap S|=\rk_N E-|\bb'\cap(E\setminus S)|<\rk_N S,
\]
contrary to \Cref{saturation-on-N}. Hence \Cref{S-is-separator} is proved, which means $S$ is a separator of $N$.

(iv) Finally, choose $\bb\in\B_w$. Applying (i) to $S,T,S\cup T,S\cap T$ gives
\begin{align*}
\rk S+\rk T&=|\bb\cap S|+|\bb\cap T|\\
 &=|\bb\cap(S\cup T)|+|\bb\cap(S\cap T)|\\
 &=\rk(S\cup T)+\rk(S\cap T).
\end{align*}
The union is a flat by (ii), proving (iv).
\end{proof}

\begin{example}\label{tight-U23}
Return to $M=U_{2,3}$ and $w=(2,1,1)$ from \Cref{weight-U23}. Here
\[
w=\one_E+\one_{\{1\}},\quad \kappa_M(w)=\rk E+\rk\{1\}=3.
\]
The lattice generated by $E$ and $\{1\}$ is
\[
\{\varnothing,\{1\},E\}.
\]
In $M_w=U_{1,\{1\}}\oplus U_{1,\{2,3\}}$, the set $\{1\}$ is a whole direct sum component and hence a separator. Thus reversing the sign on coordinate $1$ preserves the initial tope set. This is the smallest example of the mechanism used in the next lemma.
\end{example}

We also need an oriented refinement. Recall from \Cref{weight-initial-matroid} that the initial chirotope $\chi_w$ agrees with the chirotope of $\M$ on the $w$-maximizing bases and vanishes on all other bases. In particular, if $w,w'\in\NN^E$ have the same maximizing bases $\B_w=\B_{w'}$, then $\M_w=\M_{w'}$.

The next lemma provides the oriented matroid information needed during sorting. Its first conclusion says that a separator can be reoriented independently inside the direct sum $\M_w$. Its second conclusion says that every tope of $\M_w$ is incident, in the original oriented matroid $\M$, to every tight flat produced by unions and intersections.

\begin{lemma}\label{tight-tope-incidence}
Under the hypotheses of \Cref{tight-flat-lattice}, assume that $w$ is $M$-admissible and let $\N=\M_w$. For every $S\in\D$ and every tope $A\in\T(\N)$ one has $\eps_SA\in\T(\N)$ and $A\setminus S\in\Lcov(\M)$.
\end{lemma}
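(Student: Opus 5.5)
The first claim, $\eps_SA\in\T(\N)$, will come from the separator decomposition, and the second, $A\setminus S\in\Lcov(\M)$, from reading $S$ as a level set of a modified admissible weight with the same initial oriented matroid.

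\textbf{Step 1: $\eps_SA\in\T(\N)$.} By \Cref{tight-flat-lattice}(iii), $S$ is a separator of $N=M_w$, so $N=(N|S)\oplus(N|(E\setminus S))$. We first check that the oriented matroid splits accordingly, $\N=(\N|S)\oplus(\N|(E\setminus S))$. For this we use the chirotope. Every basis $\bb$ of $N$ has the form $\bb_S\sqcup\bb_{E\setminus S}$. The sign $\chi_w(\bb)=\chi(\bb)$ should factor, up to a global sign, as a product of chirotopes of the two restrictions. This factorisation is the standard fact that a matroid direct sum decomposition of the underlying matroid of an oriented matroid lifts to an oriented direct sum (\cite{Bjorner1999}). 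Once the splitting holds, the topes of $\N$ are concatenations $(A|_S,A|_{E\setminus S})$ of topes of the two summands. Negating the first factor gives a tope because $-A|_S$ is a tope of $\N|S$. Hence $\eps_SA\in\T(\N)$.

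\textbf{Step 2: $A\setminus S\in\Lcov(\M)$.} We want to place $S$ among the level sets of a weight with the same maximizing bases. Put $w'=w+\one_S$.
\begin{enumerate}[label=\textup{(\alph*)}]
\item By \Cref{tight-flat-lattice}(i), every $\bb\in\B_w$ saturates $S$. Hence $\B_w$ is exactly the set of bases maximizing both $w$ and $\one_S$. By \Cref{kappa-support}, $\kappa(w')=\kappa(w)+\rk S$, and $\B_{w'}=\B_w$. Therefore $\M_{w'}=\M_w=\N$.
\item We need $w'$ to be $M$-admissible, i.e.\ each level set $U_j(w')=U_j(w)\cup(U_{j-1}(w)\cap S)$ must be a flat.
\item Since $w$ is admissible, each $U_j(w)$ is a flat. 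The sorting of \Cref{set-sorting} applied to $(F_1,\dots,F_m)$ preserves $w$ and produces the $U_j(w)$ using only unions and intersections. Hence each $U_j(w)$ lies in $\D$.
\item Then $U_j(w')$ is built from members of $\D$ by unions and intersections, so it also lies in $\D$. By \Cref{tight-flat-lattice}(ii) it is a flat. Thus $w'$ is admissible.
\end{enumerate}

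\textbf{Step 3: reading off incidence.} By \Cref{admissible-initial-loopless} applied to $w'$, every tope $A$ of $\N=\M_{w'}$ satisfies $A\setminus F\in\Lcov(\M)$ for every flat $F$ in the flag $\F_{w'}$. It remains to see that $S$ is one of these flats. If $S$ is empty or all of $E$, the claim is trivial. Otherwise take $j=m(w)+1$. Since $U_{m(w)+1}(w)=\varnothing$, we get $U_j(w')=U_{m(w)}(w)\cap S$. This is not $S$ in general, so we choose a better index. Let $j$ range over the jumps: $S\subseteq E=U_0(w)$ gives $U_1(w')\supseteq S$. An alternative is to use the weight $w''=w+N\one_S$ with $N>m(w)$. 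Then $U_{m(w)+1}(w'')=S$, and the argument of (a)–(d) applies verbatim, since $\B_{w''}=\B_w$ again by tightness. So $S$ appears as a level set of the admissible weight $w''$. \Cref{admissible-initial-loopless} then gives $A\setminus S\in\Lcov(\M)$.

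The main obstacle is Step 1's orientation splitting. Checking that the chirotope of $\N$ genuinely decomposes requires care with signs under block reordering. Alternatively, we could deduce $\eps_SA\in\T(\N)$ from Step 3: $A\setminus S$ and $A\setminus(E\setminus S)$ are covectors of $\N$, and their compositions with $-A$ yield $\eps_SA$.
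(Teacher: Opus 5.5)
Your proposal is correct and follows essentially the paper's proof. Step 1 is the same separator argument, and your final weight $w''=w+N\one_S$ with $N>m(w)$ is exactly the paper's perturbation $w+L\one_S$, $L>\max_e w_e$: it has the same maximizing bases, it is admissible because its level sets lie in $\D$, and it has $S$ as the level set $U_{m(w)+1}$. The only differences are cosmetic: you obtain $\B_{w''}=\B_w$ via \Cref{kappa-support} and $U_j(w)\in\D$ via \Cref{set-sorting} rather than by the paper's direct formulas, and the detour through $w+\one_S$ in Step 3 can simply be deleted.
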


\begin{proof}
By \Cref{tight-flat-lattice}, $S$ is a separator of the underlying matroid of $\N$. Hence reversing all signs on $S$ preserves its tope set. This proves the first assertion.

For the second, the idea is to replace $w$ by another $M$-admissible weight without changing the initial oriented matroid, but has $S$ as an upper level set. This can be done by the following perturbation. Choose an integer $L$ sufficiently large, say $L>n:=\max_{e\in E}w_e$. Put
\[
w'=w+L\cdot\one_S.
\]
We claim that $\B_w=\B_{w'}$ and hence $\M_{w'}=\M_w=\N$. In fact, for $\cc\in\B(M)$,
\[
w'(\cc)=w(\cc)+L\cdot|\cc\cap S|\leq W+L\cdot\rk_MS,
\]
where $W=\max_{\mathfrak c\in\B(M)}w(\mathfrak c)=\kappa_M(w)$. By \Cref{tight-flat-lattice}, $\bb\in\B_w$ satisfies $|\bb\cap S|=\rk_MS$. Therefore it is $w'$-maximizing $w'(\bb)=w(\bb)+L\cdot|\bb\cap S|= W+L\cdot\rk_MS$. This proves $\B_w\subseteq \B_{w'}$. Conversely, if $\cc\in\B(M)$ saturates $S$, that is $|\cc\cap S|=\rk_MS$, then it is $w'$-maximizing exactly when it is $w$-maximizing. As for those $\cc$ that do not saturate $S$, they cannot be $w'$-maximizing since the above inequality is strict.

If $f\in S$ and $g\in E\setminus S$, we have
\begin{equation}\label{gap}
w'_f=w_f+L>n\geq w_g=w'_g.    
\end{equation}
Then $S$ becomes an upper level set of $w'$, say $S=U_{1+n}(w')$.

We also need to prove that $w'$ is $M$-admissible. Since $w=\sum_i\one_{F_i}$, we have
\[
U_j(w)=\bigcup_{\substack{J\subseteq[m]\\|J|=j}}\bigcap_{i\in J}F_i\in \D.
\]
Because of the gap \eqref{gap}, every upper level set $U_i(w')$ is of the form $S\cap U_j(w),S$ or $S\cup U_j(w)$, and hence belongs to $\D$. By \Cref{tight-flat-lattice}, $U_i(w')$ is a flat of $M$, so $w'$ is $M$-admissible and $S$ occurs in its level-flat flag. Since $A\in\T(\M_{w'})$, the initial tope characterization \eqref{initial-topes} gives $A\setminus S\in\Lcov(\M)$.
\end{proof}

\section{Periodicity of magnitude cohomology}\label{periodicty-section}

We now turn to the discussion of magnitude (co)homology of the tope graph $\G=\G(\A)$ of an arrangement $\A$. Let $\M$ be the underlying oriented matroid and $M$ the underlying matroid, with ground set $E$.

\subsection{Crossing vectors and tope-admissibility}

In the context of magnitude (co)homology of tope graphs, the length grading can be refined by crossing vectors.

For two topes $A,B\in\T=\T(\M)$, define the separating vector
\[
\vec{d}(A,B)=\one_{\Sep(A,B)}\in\{0,1\}^E.
\]
For a proper chain $\bA=(A_0,\ldots,A_k)$ in $\G$, put
\[
\vec{\ell}(\bA)=\sum_{i=1}^k\vec{d}(A_{i-1},A_i)\in\NN^E.
\]
The vector $\vec{\ell}(\bA)$ records how many times the chain crosses each hyperplane and is called its \emph{crossing vector}. If $A_i$ is smooth in $\bA$, then deleting it preserves the crossing vector $\vec{\ell}(\bA)=\vec{\ell}(\partial_i\bA)$. Hence we denote the subcomplex of $\MC_{*,\ell}(\G;\kk)$ spanned by proper chains with a prescribed crossing vector $\alpha\in\NN^E$ with $|\alpha|=\sum_{e\in E}\alpha_e=\ell$ by
\[
\MC_{*,\alpha}(\G;\kk),
\]
and its homology by $\MH_{*,\alpha}(\G;\kk)$. Similarly, we denote by $\MC^{*,\alpha}(\G;\kk)$ and $\MH^{*,\alpha}(\G;\kk)$ the cochain complex and coholomogy with crossing grading $\alpha$. Then the cup product satisfies
\[
\MH^{p,\alpha}(\G;\kk)\otimes\MH^{q,\beta}(\G;\kk)\xrightarrow{\smile}\MH^{p+q,\alpha+\beta}(\G;\kk).
\]

For $S\subseteq E$ and a sign vector $B$, recall from \Cref{backslash-reversal} that $\eps_SB$ is obtained by reversing the signs on $S$. For $\alpha\in\NN^E$, abbreviate
\[
\eps_\alpha B:=\eps_{\{e\in E\mid \alpha_e\text{ is odd}\}}B.
\]
Also note that $\eps_{\alpha}^2=\id$. Every proper chain of crossing vector $\alpha$ ending at $B$ begins at $\eps_\alpha B$. Thus
\[
\MC_{*,\alpha}(\G;\kk)_{A\to B}=0 \text{ unless } A=\eps_\alpha B.
\]

\begin{definition}
Let $\alpha\in\NN^E$ and $B\in\T(\M)$. The pair $(\alpha,B)$ is \emph{tope-admissible} if $\alpha$ is $M$-admissible and $B\setminus U_j(\alpha)\in\Lcov(\M)$ for every $j$. Equivalently, $B$ is incident to every flat in the level-flat flag $\F_\alpha$ of $\alpha$.
\end{definition}

By \eqref{initial-topes} and \Cref{admissible-initial-loopless}, the allowable terminal topes are exactly the topes of the corresponding initial oriented matroid
\begin{equation}\label{eq:admissible-initial-topes}
\{B\in\T(\M)\mid(\alpha,B)\text{ is tope-admissible}\}=\T(\M_{\F_\alpha})=\T(\M_\alpha),
\end{equation}

\subsection{Face flag theorem}
A face flag relative to a tope $B\in\T$ is defined as
\[
\Phi:\quad X_1\geq X_2\geq\cdots\geq X_m, \quad X_i\in\Lcov(\M)_{<B},
\]
where $m\geq 0$. Define its rank and crossing vector by
\[
\rk\Phi=\sum_{i=1}^m\rk z(X_i)\in\NN,\quad \vec{\ell}(\Phi)=\sum_{i=1}^m\one_{z(X_i)}\in\NN^E.
\]

\begin{lemma}\label{unique-flag}
Fix $\alpha\in\NN^E$ and $B\in\T(\M)$. There is at most one face flag $\Phi$ relative to $B$ with crossing vector $\alpha$. It exists if and only if $(\alpha,B)$ is tope-admissible. When it exists, its zero flats, with repetitions, are
\[
U_{m(\alpha)}(\alpha)\subseteq\cdots\subseteq U_1(\alpha).
\]
Its rank $\rk\Phi$ is $\kappa_M(\alpha)$.
\end{lemma}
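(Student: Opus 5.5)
The plan is to reduce everything to the order structure on covectors below $B$. The basic fact I will use is that for covectors $X,Y\leq B$ we have $X\geq Y$ if and only if $z(X)\subseteq z(Y)$. Indeed, both are obtained from $B$ by zeroing out their zero sets, so $X=B\setminus z(X)$ and $Y=B\setminus z(Y)$. Consequently a covector $X\leq B$ is determined by its zero flat, namely $X=B\setminus z(X)$. Since the $X_i$ lie in $\Lcov(\M)_{<B}$, each zero set $z(X_i)$ is a nonempty flat.

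\emph{Uniqueness and the form of the zero flats.} Let $\Phi: X_1\geq\cdots\geq X_m$ be a face flag relative to $B$ with $\vec\ell(\Phi)=\alpha$. Then $z(X_1)\subseteq z(X_2)\subseteq\cdots\subseteq z(X_m)$ is a nested chain of nonempty sets, and $\alpha=\sum_i\one_{z(X_i)}$. For nested sets, $e$ lies in exactly $\alpha_e$ of them, and these must be the last $\alpha_e$ sets. Hence $e\in z(X_i)$ if and only if $\alpha_e\geq m-i+1$, that is, $z(X_i)=U_{m-i+1}(\alpha)$. This also forces $m=m(\alpha)$, because every $z(X_i)$ is nonempty while $U_j(\alpha)=\varnothing$ for $j>m(\alpha)$. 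So the zero flats are exactly $U_{m(\alpha)}(\alpha)\subseteq\cdots\subseteq U_1(\alpha)$, listed with repetitions. Combined with $X_i=B\setminus z(X_i)$, this shows $\Phi$ is uniquely determined.

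\emph{Existence if and only if tope-admissible.} If $\Phi$ exists, each $U_j(\alpha)$ is the zero set of a covector, hence a flat, so $\alpha$ is $M$-admissible. Moreover $B\setminus U_j(\alpha)=X_{m-j+1}\in\Lcov(\M)$, so $(\alpha,B)$ is tope-admissible. Conversely, suppose $(\alpha,B)$ is tope-admissible, and set $X_i:=B\setminus U_{m(\alpha)-i+1}(\alpha)$. Each $X_i$ is a covector by hypothesis. It satisfies $X_i\leq B$, and $X_i\neq B$ because the corresponding level set is nonempty and $B$ has full support. Its zero set is exactly $U_{m(\alpha)-i+1}(\alpha)$, since $B$ has no zero entries. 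The level sets are nested, so the $X_i$ are weakly decreasing. Finally, the layer-cake identity \eqref{layercake} gives $\vec\ell(\Phi)=\alpha$.

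\emph{Rank.} We have $\rk\Phi=\sum_i\rk z(X_i)=\sum_{j=1}^{m(\alpha)}\rk U_j(\alpha)=\kappa_M(\alpha)$ by \eqref{kappa-def}. When $\alpha=\zero$, the flag is empty and $\kappa_M(\zero)=0$.

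There is no real obstacle here. The only point requiring care is the identification $X=B\setminus z(X)$ for $X\leq B$ together with the matching of indices. One should also allow repeated flats, since the flag is only weakly decreasing and $U_j(\alpha)=U_{j+1}(\alpha)$ can occur.
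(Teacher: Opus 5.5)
Your proof is correct and takes essentially the same route as the paper: counting forces $z(X_i)=U_{m-i+1}(\alpha)$, a face below $B$ is determined by its zero flat as $B\setminus z(X)$, and existence reduces to these sign vectors being covectors. You are slightly more careful than the paper in using $X_i<B$ to make each zero flat nonempty, which pins down $m=m(\alpha)$.
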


\begin{proof}
Suppose the zero flats of a face flag form the weakly increasing sequence $F_1\subseteq\cdots\subseteq F_m$ and $\alpha=\sum_i\one_{F_i}$. Counting the number of sets containing a fixed element gives
\[
F_{m-j+1}=U_j(\alpha).
\]
Thus the sequence is forced. For a fixed tope $B\in\T$, its only possible face with zero flat $F$ is $B\setminus F$, and it exists precisely when this sign vector is a covector. The rank formula follows from \eqref{kappa-def}.
\end{proof}

The following is a refinement of Theorem 6.4 (1) of \cite{Koizumi2026a}.
\begin{theorem}\label{Koizumi-basis}
For every $\alpha\in\NN^E$ and $B\in\T$,
\[
\MH_{k,\alpha}(\G;\ZZ)_{\bullet\to B}\cong\begin{cases}
 \ZZ,&k=\kappa_M(\alpha)\text{ and }(\alpha,B)\text{ is tope-admissible},\\
 0,&\text{otherwise}.
 \end{cases}
\]
In particular, all integral magnitude homology groups of $\G$ are torsion-free.
\end{theorem}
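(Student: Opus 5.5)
We will derive the statement from Koizumi's face flag theorem \cite[Theorem 6.4]{Koizumi2026a}, combined with \Cref{unique-flag}. Koizumi proves that $\MH_{*,\alpha}(\G;\ZZ)$ is free, with a basis indexed by face flags of crossing vector $\alpha$, and each basis element sits in homological degree equal to the rank of its flag. The refinement is that this decomposition respects the terminal tope. Our first step is to check this in Koizumi's construction.

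Since the differential preserves both endpoints, we have
\[
\MC_{*,\alpha}(\G;\ZZ)=\bigoplus_{B}\MC_{*,\alpha}(\G;\ZZ)_{\bullet\to B}
\]
as chain complexes, so homology splits accordingly. Koizumi's argument can be organized by the terminal tope. One filters by the last step of the chain. Deleting the final vertex relates $\MC_{*,\alpha}(\G)_{\bullet\to B}$ to complexes $\MC_{*,\alpha-\vec d(C,B)}(\G)_{\bullet\to C}$, with coefficients given by the reduced homology of the open intervals $(C,B)_\G$. By \Cref{thm:EW}, such an interval contributes only when $[C,B]_\G$ is facial with respect to some covector $X<B$. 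In that case $C=\eps_{z(X)}B$, and the contribution is a copy of $\ZZ$ shifted by $\rk z(X)-1$.

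We then induct on $|\alpha|$. The homology with terminal tope $B$ is built from the homology with terminal topes $C$ and crossing vectors $\alpha-\one_{z(X)}$, one for each face $X<B$. This is the recursion that produces face flags $X_1\ge\cdots\ge X_m$ relative to $B$: the last face $X_m$ is chosen relative to $B$, and the earlier faces are chosen relative to $\eps_{z(X_m)}B$. We must check that this identification of faces (relative to $C$ versus relative to $B$) matches the definition of face flags used here. The degree shift accumulates to $\sum_i\rk z(X_i)$.

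Granting Koizumi's theorem that the resulting spectral sequence or filtration degenerates with free $E^1$-term, the summand $\MH_{*,\alpha}(\G;\ZZ)_{\bullet\to B}$ is free on the face flags relative to $B$ with crossing vector $\alpha$. By \Cref{unique-flag}, there is at most one such flag. It exists exactly when $(\alpha,B)$ is tope-admissible, and then its rank is $\kappa_M(\alpha)$. This gives the stated formula. Torsion-freeness follows by summing over $B$.

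The main obstacle is the first step: verifying that Koizumi's face-flag basis can be chosen compatibly with the terminal-tope splitting. That is, each of his basis classes must be representable by chains ending at a single tope, namely the tope relative to which the flag is defined, and the isomorphism must be an isomorphism of $B$-graded groups rather than only of total groups. I would first try to confirm this directly from his construction. If this cannot be extracted, the fallback is to rerun the inductive filtration argument sketched above, keeping $B$ fixed throughout. In that argument the free $\ZZ$-summands come from \Cref{thm:EW}, and uniqueness of the flag makes the relevant $E^1$-page have at most one nonzero entry, so degeneration is automatic.
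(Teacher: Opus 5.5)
Your proposal is essentially the paper's proof: the paper cites Koizumi's Theorem 6.4 directly in its endpoint-refined form, namely that $\MH_{k,\alpha}(\G;\ZZ)_{\bullet\to B}$ is free on the face flags relative to $B$ with rank $k$ and crossing vector $\alpha$, and then applies \Cref{unique-flag}. So the terminal-tope compatibility you single out as the main obstacle is already part of the cited statement, since face flags are defined relative to $B$.

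You therefore do not need your fallback, and it would not work as written. Your last-step recursion runs over all faces $X<B$, and several of them can contribute in the same total degree. For example, for $U_{2,n}$ with $\alpha=\one_E+\one_{\{i\}}$ and $B\setminus\{i\}\in\Lcov(\M)$, both $X=\zero$ and $X=B\setminus\{i\}$ contribute. Hence uniqueness of the flag does not by itself reduce the $E^1$-page to a single entry.
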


\begin{proof}
Theorem 6.4 of \cite{Koizumi2026a} proves that $\MH_{k,\alpha}(\G;\ZZ)_{\bullet\to B}$ is free abelian on the face flags $\Phi$ relative to $B$ with $\rk\Phi=k$ and $\vec{\ell}(\Phi)=\alpha$.  The claim then follows from \Cref{unique-flag}.
\end{proof}

\begin{remark}
\cite[Corollary 6.7]{Koizumi2026a} proves that $\MH_{*,*}(\G;\ZZ)$ is determined by the intersection lattice $L(\A)$. On the other hand, \Cref{magcoh-is-complete} shows that the magnitude cohomology ring $\MH^{*,*}(\G;\kk)$ determines the graph $\G$ up to isomorphisms, and hence cannot be determined by solely $L(\A)$.
\end{remark}

Because the integral homology is torsion-free, mod $2$ cohomology is canonically dual to mod $2$ homology. Thus every tope-admissible pair determines a canonical class.

\begin{definition}\label{u-class}
For a tope-admissible pair $(\alpha,B)$ of $\M$, let
\[
u_{\alpha,B}\in\MH^{\kappa_M(\alpha),\alpha}(\G;\Ftwo)_{\eps_\alpha B\to B}
\]
be the unique nonzero element. For $\alpha=\zero$, write $e_B=u_{\zero,B}$.
\end{definition}

The $e_B$ are orthogonal idempotents and $1=\sum_Be_B$. Moreover,
\[
e_A\smile u_{\alpha,B}=\delta_{A,\eps_\alpha B}u_{\alpha,B}, \qquad  u_{\alpha,B}\smile e_C=\delta_{B,C}u_{\alpha,B},
\]
where $\delta_{*,*}$ is the Kronecker delta.

\begin{example}
Continue \Cref{three-line-minors} and take $\alpha=(2,1,1)$. Then
\[
U_1(\alpha)=E,\quad U_2(\alpha)=\{1\}, \quad  \kappa_M(\alpha)=\rk E+\rk\{1\}=2+1=3.
\]
The vector is $U_{2,3}$-admissible. Its tope-admissible terminal topes are exactly the four topes in the initial square of \Cref{three-line-initial}. Hence $\MH_{3,(2,1,1)}(\G;\ZZ)\cong\ZZ^4$. For example, $B=(+++)$ has source $\eps_\alpha B=(+--)$, and the corresponding canonical class $u_{(2,1,1),(+++)}\in\MH^{3,(2,1,1)}(\G;\Ftwo)_{(+--)\to(+++)}$.
\end{example}

\subsection{Special chains}

In this subsection, let us fix $\alpha,\beta\in\NN^E$ and $C\in\T$, and put
\[
B=\eps_\beta C, \quad A=\eps_\alpha B
\]
For a chain
\[
\bD=(A=D_0,D_1,\ldots,D_k=C)\in\MC_{k,\alpha+\beta}(\G;\kk)_{A\to C},
\]
define the \emph{cumulative crossing vectors}
\[
\lambda_i(\bD)=\sum_{j=1}^i\vec{d}(D_{j-1},D_j)\in\NN^E, \quad 0\leq i\leq k.
\]
The vectors $\lambda_i(\bD)$ are strictly increasing in the coordinatewise order
\[
\zero=\lambda_0(\bD)<\lambda_1(\bD)<\cdots<\lambda_k(\bD)=\alpha+\beta.
\]

Next we generalize the notion of special chains defined in Definition 5.1 of \cite{Koizumi2026a}.

\begin{definition}
The chain $\bD$ is \emph{$\alpha$-special} if $\lambda_i(\bD)=\alpha$ for some $i$. Let $N^\alpha_{*,\alpha+\beta}(\G;\kk)_{A\to C}$ be the submodule of $\MC_{*,\alpha+\beta}(\G;\kk)_{A\to C}$ spanned by the chains that are not $\alpha$-special.
\end{definition}

\begin{lemma}\label{lem:general-nonspecial}
$N^\alpha_{*,\alpha+\beta}(\G;\kk)_{A\to C}$ is a subcomplex.
\end{lemma}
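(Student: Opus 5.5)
We need to show that the differential maps non-$\alpha$-special chains to combinations of non-$\alpha$-special chains. Since $\partial=\sum_i(-1)^i\partial_i$ and each $\partial_i$ either kills a chain or deletes a smooth interior vertex, it suffices to prove the following: if $\bD$ is not $\alpha$-special and $D_i$ is smooth in $\bD$, then $\partial_i\bD$ is again not $\alpha$-special. Equivalently, by contraposition, if $\partial_i\bD$ is $\alpha$-special, then $\bD$ is already $\alpha$-special.

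The key step is to compare cumulative crossing vectors. Write $\bD'=\partial_i\bD=(D_0,\ldots,D_{i-1},D_{i+1},\ldots,D_k)$. Smoothness of $D_i$ means
\[
d(D_{i-1},D_{i+1})=d(D_{i-1},D_i)+d(D_i,D_{i+1}).
\]
In the tope graph this metric is $|\Sep|$, and $\Sep(D_{i-1},D_{i+1})=\Sep(D_{i-1},D_i)\mathbin{\triangle}\Sep(D_i,D_{i+1})$. Hence smoothness is equivalent to the two separating sets being disjoint, which gives
\[
\vec d(D_{i-1},D_{i+1})=\vec d(D_{i-1},D_i)+\vec d(D_i,D_{i+1}).
\]
From this, the cumulative vectors of $\bD'$ are exactly those of $\bD$ with $\lambda_i(\bD)$ omitted: $\lambda_j(\bD')=\lambda_j(\bD)$ for $j<i$, and $\lambda_j(\bD')=\lambda_{j+1}(\bD)$ for $j\geq i$. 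Therefore the set $\{\lambda_j(\bD')\}$ is contained in $\{\lambda_j(\bD)\}$. If $\alpha=\lambda_j(\bD')$ for some $j$, then $\alpha$ already occurs among the $\lambda_j(\bD)$, so $\bD$ is $\alpha$-special. This is the contrapositive we wanted.

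The argument concludes as follows. For a non-$\alpha$-special $\bD$, each term $\partial_i\bD$ is either $0$ or a chain that is not $\alpha$-special. The endpoints and the crossing vector $\alpha+\beta$ are preserved, so $\partial\bD$ lies in $N^\alpha_{*,\alpha+\beta}(\G;\kk)_{A\to C}$. The only point needing care, and the main obstacle, is the identity showing that smoothness forces the crossing vectors to add. It rests on the isometric hypercube embedding $d_\G=|\Sep|$ together with the symmetric difference formula for separating sets; the rest is bookkeeping.
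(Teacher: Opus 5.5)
Your proof is correct and follows the same route as the paper: smoothness forces the separating vectors to add, so deleting a smooth node only drops the cumulative vector $\lambda_i(\bD)$ and cannot create a new occurrence of $\alpha$. You also justify the additivity step explicitly, via $d_{\G}=|\Sep|$ and the symmetric-difference identity, which the paper asserts without explanation.
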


\begin{proof}
Consider a chain $\bD\in N^\alpha_{*,\alpha+\beta}(\G;\kk)_{A\to C}$ with $D_i$ a smooth node, that is
\[
d(D_{i-1},D_{i+1})=d(D_{i-1},D_i)+d(D_i,D_{i+1}).
\]
Then we must have the same relation for separating vectors
\[
\vec{d}(D_{i-1},D_{i+1})=\vec{d}(D_{i-1},D_i)+\vec{d}(D_i,D_{i+1}).
\]
Then the differential deleting $D_i$ preserves the total crossing vector $\alpha+\beta$ and cannot create a new occurrence of $\alpha$ in the cumulative crossing vectors.
\end{proof}

Denote the quotient complex by the following
\[
Sp^\alpha_{*,\alpha+\beta}(\G;\kk)_{A\to C}=\MC_{*,\alpha+\beta}(\G;\kk)_{A\to C}/N^\alpha_{*,\alpha+\beta}(\G;\kk)_{A\to C}.
\]

\begin{definition}
If $\bD=(D_0,\ldots,D_k)\in\MC_{k,\alpha+\beta}(\G;\kk)_{A\to C}$ is $\alpha$-special with $\lambda_i(\bD)=\alpha$, we call $D_i$ the \emph{special node} of $\bD$, and define the \emph{special prefix} and \emph{special suffix} of $\bD$ by
\[
\bD^{\text{pref}}=(D_0,\ldots,D_i),\quad \bD^{\text{suff}}=(D_i,\ldots,D_k).
\]
\end{definition}

\begin{remark}
If $\bD$ is $\alpha$-special, we must have its special node $D_i=B$ by parity. Then
\[
\bD^{\mathrm{pref}}\in\MC_{i,\alpha}(\G;\kk)_{A\to B},\quad \bD^{\mathrm{suff}}\in\MC_{k-i,\beta}(\G;\kk)_{B\to C}.
\]
\end{remark}

Splitting an $\alpha$-special chain into its special prefix and special suffix gives the following generalization of Proposition 5.4 of \cite{Koizumi2026a}.

\begin{proposition}\label{split-quotient}
There is a natural isomorphism of chain complexes
\[
Sp^\alpha_{*,\alpha+\beta}(\G;\kk)_{A\to C}\cong\MC_{*,\alpha}(\G;\kk)_{A\to B}\otimes_\kk\MC_{*,\beta}(\G;\kk)_{B\to C}.
\]
Equivalently, there is a chain map
\[
q_{\alpha,\beta,C}:\MC_{*,\alpha+\beta}(\G;\kk)_{A\to C}\to\MC_{*,\alpha}(\G;\kk)_{A\to B}\otimes_\kk\MC_{*,\beta}(\G;\kk)_{B\to C}
\]
which sends a special chain $\bD$ to $\bD^{\mathrm{pref}}\otimes\bD^{\mathrm{suff}}$ and a nonspecial chain to zero.
\end{proposition}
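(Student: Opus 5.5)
The plan is to define $q=q_{\alpha,\beta,C}$ directly on basis chains, check that it is a chain map, and then observe that it induces the stated isomorphism on the quotient. On the quotient, $q$ kills the subcomplex $N^\alpha_{*,\alpha+\beta}(\G;\kk)_{A\to C}$, which is a subcomplex by \Cref{lem:general-nonspecial}, and it gives a bijection between basis elements.

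First we establish the bijection. By the parity remark, every $\alpha$-special chain $\bD$ in $\MC_{k,\alpha+\beta}(\G;\kk)_{A\to C}$ has special node $D_i=B$. The index $i$ is unique because the cumulative crossing vectors are strictly increasing. Then $\bD^{\mathrm{pref}}$ is a proper chain from $A$ to $B$ with crossing vector $\alpha$, and $\bD^{\mathrm{suff}}$ is a proper chain from $B$ to $C$ with crossing vector $\beta$. Conversely, given a proper $p$-chain $\bA$ from $A$ to $B$ with crossing vector $\alpha$ and a proper $q$-chain $\bD'$ from $B$ to $C$ with crossing vector $\beta$, their concatenation at $B$ is a proper $(p+q)$-chain. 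It has crossing vector $\alpha+\beta$, and it is $\alpha$-special at position $p$. These two operations are mutually inverse. The degrees match the tensor grading $p+q=k$. So the special chains form a basis of $Sp^\alpha$ that is in bijection with the tensor basis.

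Next we check compatibility with the differentials. Here the tensor product carries the Koszul differential $\partial\otimes 1+(-1)^p\,1\otimes\partial$. Take a special chain $\bD$ with special node at position $p$ and compute $\partial\bD$ term by term modulo $N^\alpha$.

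\begin{itemize}
\item For $0<j<p$: deleting the smooth node $D_j$ leaves $\lambda_p$ unchanged, and the node $B$ moves to position $p-1$. The result is still special, with prefix $\partial_j\bD^{\mathrm{pref}}$ and the same suffix. Smoothness of $D_j$ in $\bD$ depends only on $D_{j-1},D_j,D_{j+1}$, all of which lie in the prefix. The signs $(-1)^j$ agree.
\item For $p<j<k$: the same argument gives the term $(-1)^j\,\bD^{\mathrm{pref}}\otimes\partial_{j-p}\bD^{\mathrm{suff}}$. Since $(-1)^j=(-1)^p(-1)^{j-p}$, this is exactly the Koszul sign.
\item For $j=p$: if $B$ is smooth, then deleting it produces a chain whose cumulative vectors are $\lambda_{p-1}<\alpha<\lambda_{p+1}$. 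No cumulative vector equals $\alpha$ any longer, so the result is nonspecial and lies in $N^\alpha$. This matches the tensor differential, which has no term at the junction because $\partial_i$ runs only over interior indices of each factor.
\end{itemize}

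Hence $q\circ\partial=(\partial\otimes1\pm1\otimes\partial)\circ q$ on special chains. On nonspecial chains both sides vanish: the boundary of a nonspecial chain stays in $N^\alpha$ by \Cref{lem:general-nonspecial}, and $q$ kills $N^\alpha$. So $q$ is a chain map with kernel exactly $N^\alpha$, and by the bijection it induces the claimed isomorphism.

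The main obstacle is the junction case $j=p$. Deleting $B$ when it is smooth must lead only to nonspecial chains. This holds because the strict increase of the $\lambda_i$ makes the index realizing $\alpha$ unique, so removing that node removes the only occurrence of $\alpha$. The sign bookkeeping is the other point to watch, and it is handled by the factorization $(-1)^j=(-1)^p(-1)^{j-p}$.
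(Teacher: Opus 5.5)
Your proposal is correct and follows the same route as the paper. Splitting at the unique special node $D_p=B$ gives a bijection of preferred bases. Comparing $\partial\bD$ term by term, the interior deletions on either side reproduce the tensor differential with the sign $(-1)^p$, while deleting the special node itself gives zero or a nonspecial chain and so vanishes modulo $N^\alpha$. You also spell out details the paper leaves implicit, namely uniqueness of the special index from the strict increase of the $\lambda_i$, and the fact that smoothness depends only on three consecutive nodes.
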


\begin{proof}
Concatenation and splitting are mutually inverse bijections on the preferred bases and hence provide the desired isomorphism of chain groups. To see these are also chain maps, we consider the boundary term of an $\alpha$-special chain $\bD$ deleting the special node $D_i$, it is either zero or nonspecial and hence vanishes in the quotient. Thus
\[
\partial\bD\equiv\sum_{j=1}^{i-1}(-1)^j\partial_j\bD+(-1)^i\sum_{j=1}^{k-i-1}(-1)^j\partial_{i+j}\bD\quad \mod N_{*,\alpha+\beta}^{\alpha}(\G;\kk)_{A\to C}.
\]
This is exactly the tensor product differential with the first tensor factor in degree $i$.
\end{proof}

\begin{proposition}\label{cup-dual-split}
Under the canonical identification
\[
\MC^{*,\alpha}(\G;\kk)_{A\to B}\otimes_{\kk}\MC^{*,\beta}(\G;\kk)_{B\to C}\cong\left(\MC_{*,\alpha}(\G;\kk)_{A\to B}\otimes_{\kk}\MC_{*,\beta}(\G;\kk)_{B\to C}\right)^\vee,
\]
the dual of the split map $q_{\alpha,\beta,C}$ is given by the magnitude cup product
\[
q_{\alpha,\beta,C}^{\vee}:\MC^{*,\alpha}(\G;\kk)_{A\to B}\otimes_{\kk}\MC^{*,\beta}(\G;\kk)_{B\to C}\xrightarrow{\smile}\MC^{*,\alpha+\beta}(\G;\kk)_{A\to C}.
\]
\end{proposition}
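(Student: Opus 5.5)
We verify the claim on basis elements. The tensor product of cochain complexes has a natural basis of tensors $\delta_{\bA}\otimes\delta_{\bD'}$ of dual-basis functionals. Here $\bA$ runs over proper chains from $A$ to $B$ with crossing vector $\alpha$, and $\bD'$ over proper chains from $B$ to $C$ with crossing vector $\beta$. Since both sides are $\kk$-linear, it suffices to show that $q_{\alpha,\beta,C}^\vee(\delta_{\bA}\otimes\delta_{\bD'})$ and $\delta_{\bA}\smile\delta_{\bD'}$ take the same value on every proper chain $\bD=(D_0,\ldots,D_k)\in\MC_{k,\alpha+\beta}(\G;\kk)_{A\to C}$. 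The identification of the tensor product of duals with the dual of the tensor product is harmless: all chain groups are finitely generated free modules, because $\G$ is finite. We take it in the standard (Koszul) form, where $(\varphi\otimes\psi)(x\otimes y)=\varphi(x)\psi(y)$ when degrees match and $0$ otherwise. Any sign convention agrees with the unsigned one in the relevant bidegrees, and over $\Ftwo$ signs are irrelevant anyway.

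First we compute the dual map. By definition of $q$,
\[
q^\vee(\delta_{\bA}\otimes\delta_{\bD'})(\bD)=(\delta_{\bA}\otimes\delta_{\bD'})(q(\bD)).
\]
If $\bD$ is not $\alpha$-special, then $q(\bD)=0$. If $\bD$ is special with special node $D_i$, then $D_i=B$ and $q(\bD)=\bD^{\mathrm{pref}}\otimes\bD^{\mathrm{suff}}$. The value is then $1$ exactly when $\bD^{\mathrm{pref}}=\bA$ and $\bD^{\mathrm{suff}}=\bD'$, and $0$ otherwise. The special node is unique, since the $\lambda_i(\bD)$ are strictly increasing. So the value is $1$ exactly when $\bD$ is the concatenation of $\bA$ and $\bD'$, and $0$ otherwise.

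Next we compute the cup product. Let $p$ be the length of $\bA$ and $q'$ the length of $\bD'$, and take $\bD$ with $k=p+q'$. Formula \eqref{cup-chain} gives
\[
(\delta_{\bA}\smile\delta_{\bD'})(\bD)=\delta_{\bA}(D_0,\ldots,D_p)\,\delta_{\bD'}(D_p,\ldots,D_k).
\]
This is $1$ exactly when $(D_0,\ldots,D_p)=\bA$ and $(D_p,\ldots,D_k)=\bD'$, that is, when $\bD$ is the concatenation of $\bA$ and $\bD'$. In that case $\lambda_p(\bD)=\vec\ell(\bA)=\alpha$, so $\bD$ is $\alpha$-special with special node $D_p$, and its prefix and suffix are $\bA$ and $\bD'$. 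Otherwise the value is $0$.

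The two values agree on every chain, so the two cochains coincide. This proves the identity at the cochain level. The compatibility with differentials is then automatic, because $q$ is a chain map by \Cref{split-quotient} and dualizing a chain map gives a cochain map.

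The only point needing care is that the split position in the cup product, namely the index $p=\deg\bA$, coincides with the special-node index. The strict monotonicity of the cumulative crossing vectors $\lambda_i$ guarantees this.
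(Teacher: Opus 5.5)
Your proposal is correct and follows essentially the paper's argument. Both evaluate $q^\vee_{\alpha,\beta,C}(\varphi\otimes\psi)$ and $\varphi\smile\psi$ on a chain $\bD$ of degree $p+q$ and check that they agree; the paper does this for arbitrary $\varphi,\psi$ by splitting into the cases $\lambda_p(\bD)=\alpha$ or not, while you do it on dual-basis functionals.

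One small caveat concerns your aside that ``any sign convention agrees with the unsigned one''. That is not accurate: a genuine Koszul identification introduces a sign $(-1)^{pq}$. However, you, like the paper, actually use the unsigned identification $(\varphi\otimes\psi)(x\otimes y)=\varphi(x)\psi(y)$, so the argument is unaffected.
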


\begin{proof}
Take $\varphi\in\MC^{p,\alpha}(\G;\kk)_{A\to B}, \psi\in\MC^{q,\beta}(\G;\kk)_{B\to C}$ and $\bD\in\MC_{p+q,\alpha+\beta}(\G(\M);\kk)_{A\to C}$. If $\lambda_p(\bD)=\alpha$, then $D_p=B$ and
\[
q_{\alpha,\beta,C}(\bD)=(D_0,\ldots,D_p)\otimes(D_p,\ldots,D_{p+q}).
\]
Therefore, by \eqref{cup-chain},
\[
q_{\alpha,\beta,C}^\vee (\varphi\otimes\psi)(\bD)=\varphi(D_0,\ldots,D_p)\psi(D_p,\ldots,D_{p+q})=(\varphi\smile\psi)(\bD).
\]
If $\lambda_p(\bD)\neq\alpha$, then the degree $(p,q)$ component of $q_{\alpha,\beta,C}(\bD)$ is zero. The above equation remains true.
\end{proof}

\subsection{Periodicity of magnitude cohomology}

Koizumi proved the following periodicity theorem.

\begin{theorem}[{\cite[Theorem 3.2]{Koizumi2026a}}]
Let $\alpha\in\NN^E$ satisfy $U_1(\alpha)=E$ (that is $\alpha\geq \one_E$). For a tope $B\in\T(\M)$, we have
\[
\MH_{k,\alpha}(\G;\ZZ)_{\bullet\to B}\cong \MH_{k-\rk M,\alpha-\one_E}(\G;\ZZ)_{\bullet\to B}.    
\]
\end{theorem}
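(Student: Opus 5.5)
Since \Cref{Koizumi-basis} is already available, the plan is to deduce the periodicity isomorphism numerically: compute both sides with \Cref{Koizumi-basis} and check that they agree. Both groups are then either $\ZZ$ or $0$, so it suffices to show two things. First, $(\alpha,B)$ is tope-admissible if and only if $(\alpha-\one_E,B)$ is. Second, $\kappa_M(\alpha)=\kappa_M(\alpha-\one_E)+\rk M$, so that the degree condition $k=\kappa_M(\alpha)$ becomes $k-\rk M=\kappa_M(\alpha-\one_E)$. The isomorphism obtained this way is abstract; it is not induced by any chain map. That suffices for the statement as given.

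The first step is to compare the upper level sets. Put $\alpha'=\alpha-\one_E\in\NN^E$, which makes sense because $\alpha\geq\one_E$. Then $U_{j+1}(\alpha)=U_j(\alpha')$ for every $j\geq1$, and $U_1(\alpha)=E$. Also $m(\alpha)=m(\alpha')+1$, with the convention $m(\zero)=0$. Summing ranks in \eqref{kappa-def} gives $\kappa_M(\alpha)=\rk E+\kappa_M(\alpha')$, which is the degree shift.

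The second step is admissibility. The ground set $E$ is always a flat of $M$. Hence the nonempty level sets of $\alpha$ are flats exactly when those of $\alpha'$ are, so $\alpha$ is $M$-admissible if and only if $\alpha'$ is. For the tope condition, the only additional requirement for $\alpha$ is $B\setminus E\in\Lcov(\M)$. But $B\setminus E=\zero$, which is a covector because $\A$ is central (it is the sign vector of the origin). So $(\alpha,B)$ is tope-admissible if and only if $(\alpha',B)$ is, and \Cref{Koizumi-basis} then gives the claimed isomorphism for every $k$.

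It remains to check the degenerate case $\alpha=\one_E$, where $\alpha'=\zero$. Then $\kappa_M(\zero)=0$ and $(\zero,B)$ is tope-admissible. Directly, $\MH_{0,\zero}(\G;\ZZ)_{\bullet\to B}\cong\ZZ$, generated by the $0$-chain $(B)$, which is consistent with $\kappa_M(\one_E)=\rk M$.

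The main obstacle is logical rather than computational. \Cref{Koizumi-basis} rests on \cite[Theorem~6.4]{Koizumi2026a}, and Koizumi's argument may itself use periodicity, in which case this derivation is circular as a proof of \cite[Theorem~3.2]{Koizumi2026a}. If an independent argument is needed, I would instead use \Cref{split-quotient} with the splitting at $\one_E$ (taking $\one_E$ in place of the paper's $\alpha$ and $\alpha'$ in place of $\beta$). The factor $\MC_{*,\one_E}(\G)_{-B'\to B'}$, where $B'=\eps_{\alpha'}B$ is the intermediate tope, has homology $\ZZ$ in degree $\rk M$: by \Cref{thm:EW}, the interval between antipodal topes is the whole tope set, which is facial for the covector $\zero$, so its open interval is a sphere $S^{\rk M-2}$. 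One would then show that the non-$\one_E$-special subcomplex is acyclic, using the contractibility half of \Cref{thm:EW}. The Künneth formula then yields the shift by $\rk M$. Proving that acyclicity is where the real work of that alternative route would lie.
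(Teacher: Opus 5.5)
Your deduction is correct as an argument from \Cref{Koizumi-basis}. Since $U_{j+1}(\alpha)=U_j(\alpha-\one_E)$, you get $\kappa_M(\alpha)=\rk M+\kappa_M(\alpha-\one_E)$. Because $E$ is a flat and $B\setminus E=\zero$ is always a covector, $(\alpha,B)$ and $(\alpha-\one_E,B)$ are tope-admissible simultaneously. So both sides are $\ZZ$ or $0$ under identical conditions; this is the same bookkeeping the paper does in the proof of \Cref{periodicity-product}.

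The paper itself does not prove the statement; it quotes \cite{Koizumi2026a}. The mechanism it relies on, visible in \Cref{periodicity-product} with $F=E$, is your fallback. The split map $q_{\one_E,\alpha-\one_E,B}$ is a quasi-isomorphism because the $\one_E$-nonspecial subcomplex is acyclic by \cite[Section~5]{Koizumi2026a}. The antipodal factor is a single $\ZZ$ in degree $\rk M$, and K\"unneth finishes.

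Your route is two lines, but it consumes the whole face-flag classification. In \cite{Koizumi2026a} that classification comes after, and very likely uses, periodicity, so, as you say, your argument is a consistency check rather than an independent proof. It also only matches ranks. The split-map route instead produces the isomorphism from an explicit chain map whose dual is cup product with the single-flat class. That naturality is exactly what the paper needs for $u_{\alpha,C}=x_{F,B}\smile u_{\alpha-\one_F,C}$. For the same reason, your remark that the isomorphism ``is not induced by any chain map'' is too strong. It is induced by $q_{\one_E,\alpha-\one_E,B}$ followed by a cocycle on the antipodal factor; your argument simply does not see this.

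If you pursue the fallback, note that the contractibility half of \Cref{thm:EW} does not by itself give the acyclicity. Write $A=\eps_\alpha B$ for the source, and filter nonspecial chains by the last node $D_p$ before the chain first re-crosses a hyperplane. The graded pieces then involve the interval complexes of $(A,D_p)$ with $D_p\neq -A$, and these need not be acyclic. For example, $[A,D_p]$ is facial when $D_p$ is adjacent to $A$, and then $(A,D_p)=\varnothing\simeq S^{-1}$. Those contributions have to be cancelled through the suffixes, which is the actual work done in Koizumi's acyclicity theorem.
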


The goal of this subsection is to prove the dual of the periodicity theorem in mod $2$ cohomology is given by cup product. To this end, we first need a technical lemma.

\begin{lemma}\label{chain-localization-special}
Let $(\alpha,C)$ be tope-admissible and put
\[
F=U_1(\alpha),\quad X=C\setminus F,\quad A=\eps_\alpha C,\quad B=\eps_{\alpha-\one_F}C.
\]
Write
\[
\bar\alpha=\alpha|_F,\quad \bar A=A|_F,\quad \bar B=B|_F,\quad \bar C=C|_F,
\]
and let $\one^F\in\NN^F$ denote the all-ones vector on $F$. Then restriction to $F$ induces an isometric graph isomorphism
\begin{equation}\label{facial-localization-graph}
\rho_X:\G_X\xrightarrow{\cong}\G(\M|F), \quad D\longmapsto D|_F.
\end{equation}
For every $\delta\in\NN^E$ supported on $F$, it consequently induces a chain isomorphism
\begin{equation}\label{facial-localization-chain}
\rho_\delta:\MC_{*,\delta}(\G;\kk)_{D\to D'}\xrightarrow{\cong}\MC_{*,\delta|_F}(\G(\M|F);\kk)_{D|_F\to D'|_F}
\end{equation}
whenever $D,D'\in\T_X$. Under these identifications, we have
\begin{equation}\label{facial-localization-nonspecial}
\rho_\alpha\left(N^{\one_F}_{*,\alpha}(\G;\kk)_{A\to C}\right)=N^{\one^F}_{*,\bar\alpha}(\G(\M|F);\kk)_{\bar A\to\bar C}.
\end{equation}
Moreover, the following diagram commutes
\begin{equation}\label{facial-localization-split}
\xymatrix@C=3.2em@R=4.2em{
\MC_{*,\alpha}(\G;\kk)_{A\to C}
  \ar[r]^-{\scriptstyle q_{\one_F,\alpha-\one_F,C}}
  \ar[d]_-{\scriptstyle \rho_\alpha}^{\cong}
&
{\begin{array}{c}
\MC_{*,\one_F}(\G;\kk)_{A\to B}\\[-2pt]
{}\otimes
\MC_{*,\alpha-\one_F}(\G;\kk)_{B\to C}
\end{array}}
  \ar[d]^-{\scriptstyle
    \rho_{\one_F}\otimes\rho_{\alpha-\one_F}}_{\cong}
\\
\MC_{*,\bar\alpha}(\G(\M|F);\kk)_{\bar A\to\bar C}
  \ar[r]^-{\scriptstyle
    q_{\one^F,\bar\alpha-\one^F,\bar C}}
&
{\begin{array}{c}
\MC_{*,\one^F}(\G(\M|F);\kk)_{\bar A\to\bar B}\\[-2pt]
{}\otimes
\MC_{*,\bar\alpha-\one^F}
  (\G(\M|F);\kk)_{\bar B\to\bar C}
\end{array}}
}
\end{equation}
\end{lemma}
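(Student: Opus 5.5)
The plan is to first establish the graph isomorphism \eqref{facial-localization-graph} and then derive the remaining claims formally from it, using that every chain under consideration stays inside $\T_X$.

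\textbf{The graph isomorphism.}
1. Since $(\alpha,C)$ is tope-admissible, $F=U_1(\alpha)$ is a flat and $X=C\setminus F\in\Lcov(\M)$.
2. By the standard description of the face $\T_X$ (the topes of the contraction $\M/z(X)$ placed above $X$), the map $D\mapsto D|_F$ is a bijection from $\T_X$ onto $\T(\M|F)$. Concretely, topes $D\geq X$ are exactly $X\circ T$ for topes $T$, and $D|_F$ ranges over $\T(\M|F)$ because $X\circ T$ restricts to $T|_F$.
3. Two topes $D,D'\in\T_X$ agree off $F$, so $\Sep_\M(D,D')=\Sep_{\M|F}(D|_F,D'|_F)$. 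As both $\M$ and $\M|F$ are simple, adjacency and distances are measured by $|\Sep|$ in each, so $\rho_X$ is an isometry of graphs. In particular $\G_X$ is isometrically embedded in $\G$.

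\textbf{The chain isomorphism.}
4. Let $\delta$ be supported on $F$. A proper chain with crossing vector $\delta$ never crosses a hyperplane outside $F$. Its topes therefore all agree with $D$ off $F$, so if $D\in\T_X$ every vertex of the chain lies in $\T_X$.
5. Hence the chains in $\MC_{*,\delta}(\G;\kk)_{D\to D'}$ are exactly the chains in $\G_X$ with that crossing vector. Restriction to $F$ gives a bijection on basis chains, and the inverse is $T\mapsto X\circ T$.
6. Smoothness of an interior node depends only on the distances $d(D_{i-1},D_{i+1})$, $d(D_{i-1},D_i)$ and $d(D_i,D_{i+1})$. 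These are preserved by step 3, so the differentials correspond and $\rho_\delta$ is a chain isomorphism. Crossing vectors restrict coordinatewise, so the grading $\delta$ goes to $\delta|_F$.

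\textbf{The nonspecial subcomplexes and the diagram.}
7. The cumulative crossing vectors of a chain are supported on $F$ and satisfy $\lambda_i(\rho_\alpha\bD)=\lambda_i(\bD)|_F$. Therefore $\lambda_i(\bD)=\one_F$ if and only if $\lambda_i(\rho_\alpha\bD)=\one^F$. So $\rho_\alpha$ maps the $\one_F$-special chains bijectively onto the $\one^F$-special chains and the nonspecial ones onto the nonspecial ones, which gives \eqref{facial-localization-nonspecial}.
8. The vertex $B=\eps_{\alpha-\one_F}C$ differs from $C$ only on $F$, so $B\in\T_X$, and likewise $A\in\T_X$. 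Hence $\rho_{\one_F}$ and $\rho_{\alpha-\one_F}$ are defined, and $\bar B$, $\bar A$ are the corresponding parity reflections of $\bar C$.
9. For commutativity of \eqref{facial-localization-split}, check on basis chains. For a special chain $\bD$, the prefix/suffix split occurs at the same index before and after restriction, and restricting the prefix and suffix commutes with splitting. For a nonspecial chain, both routes give $0$ by step 7.

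\textbf{Main obstacle.}
The only non-formal point is step 2, the bijection $\T_X\to\T(\M|F)$. I would cite the standard fact that $\Lcov(\M)_{\geq X}\cong\Lcov(\M/\,(E\setminus z(X)))$ \cite{Bjorner1999}, that is, the interval above $X$ is the covector lattice of the restriction to $z(X)=F$. Alternatively, it can be seen directly in the realization: near a point of the face $X$, only the hyperplanes in $F$ matter, and the local arrangement is the restriction $\A|F$.
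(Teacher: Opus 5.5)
Your proposal is correct and is essentially the paper's proof: the bijection $\T_X\to\T(\M|F)$ via composition with $X$, equal separating sets giving the isometry, confinement of $\delta$-chains to $\T_X$, restriction of cumulative crossing vectors, and compatibility of splitting with restriction. Two small corrections. First, the interval above $X$ is modeled on the restriction $\M|F$ (deletion of the complement of $F$), not on the contraction $\M/z(X)$ or $\M/(E\setminus z(X))$ as you wrote; your concrete $X\circ T$ argument is the correct one. Second, in step 8, agreeing with $C$ off $F$ puts $B$ in $\T_X$ only once you know $B$ is a tope. This holds because $C\in\T(\M_{\alpha-\one_F})$ and $\eps_{\alpha-\one_F}$ reverses a union of direct summands of $\M_{\alpha-\one_F}$, a point the paper also leaves implicit.
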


\begin{proof}
Since $(\alpha,C)$ is tope-admissible, $F$ is a flat and $X=C\setminus F$ is a covector with zero set $z(X)=F$. Recall that
\[
\T_X=\{D\in\T(\M)\mid X\leq D\}.
\]
If $D\in\T_X$, then $D|_F$ is a tope of $\M|F$. Conversely, let $\bar D\in\T(\M|F)$. By the definition of restriction, there is a covector $Y\in\Lcov(\M)$ such that $Y|_F=\bar D$. The Tits product \eqref{titsproduct} gives $X\circ Y\in\Lcov(\M)$. Since $X$ is nonzero outside $F$ and $\bar D$ is nonzero on $F$, the sign vector $X\circ Y$ has no zero entries, meaning that it is a tope in $\T_X$, and $(X\circ Y)|_F=\bar D$. This proves that $\rho_X$ is a bijection on vertices.

Any two topes in $\T_X$ agree outside $F$, and hence
\[
 \Sep_{\M}(D,D')=\Sep_{\M|F}(D|_F,D'|_F).
\]
Because $M$ is simple, so is $M|F$, and the tope metrics on both sides are given by the cardinality of the separating set. Thus $\rho_X$ is an isometry and hence a graph isomorphism, proving \eqref{facial-localization-graph}.

Now let $\delta$ be supported on $F$. Every chain of crossing vector $\delta$ whose endpoints lie in $\T_X$ crosses no coordinate outside $F$, so all of its nodes lie in $\T_X$. The isometry $\rho_X$ preserves the crossing vector, the magnitude length, and the condition that an internal node is smooth. Applying $\rho_X$ nodewise therefore gives the chain isomorphism \eqref{facial-localization-chain}.

For a chain $\bD=(D_0,\ldots,D_k)$ of crossing vector $\alpha$, restriction also preserves every cumulative crossing vector $\lambda_i(\rho_X\bD)=\lambda_i(\bD)|_F$. Since all coordinates outside $F$ are zero, this implies
\[
\lambda_i(\bD)=\one_F \iff \lambda_i(\rho_X\bD)=\one^F.
\]
Thus a chain is $\one_F$-special in the ambient graph if and only if its localized chain is $\one^F$-special. This proves \eqref{facial-localization-nonspecial}. Finally, restriction neither changes the special index nor the special prefix and the special suffix, so it intertwines the split maps, giving \eqref{facial-localization-split}.
\end{proof}

For a nonempty flat $F\in L(M)$ and a tope $B\in\T(\M)$ satisfying $B\setminus F\in\Lcov(\M)$, that is $(\one_F,B)$ is tope-admissible, define
\begin{equation}\label{single-flat}
x_{F,B}=u_{\one_F,B}\in\MH^{\rk F,\one_F}(\G;\Ftwo)_{\eps_FB\to B}.    
\end{equation}
We set $x_{\varnothing,B}=e_B$. We call such classes \emph{single-flat classes}. Now we can prove the cohomology periodicity theorem.

\begin{theorem}\label{periodicity-product}
Let $(\alpha,C)$ be tope-admissible. Put $F=U_1(\alpha)$ and $B=\eps_{\alpha-\one_F}C$. Then
\[
x_{F,B}\smile-:\MH^{\kappa_M(\alpha-\one_F),\alpha-\one_F}(\G;\Ftwo)_{B\to C}\xrightarrow{\cong}\MH^{\kappa_M(\alpha),\alpha}(\G;\Ftwo)_{\eps_\alpha C\to C}
\]
is an isomorphism, and
\[
u_{\alpha,C}=x_{F,B}\smile u_{\alpha-\one_F,C}.
\]
\end{theorem}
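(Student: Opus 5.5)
By \Cref{cup-dual-split}, the cup product with $x_{F,B}$ is the dual of the split map $q=q_{\one_F,\alpha-\one_F,C}$. So the plan is to show that $q$ induces an isomorphism in homology, in the relevant bidegree, over $\ZZ$ (or at least over $\Ftwo$). The isomorphism statement then dualizes. The identity $u_{\alpha,C}=x_{F,B}\smile u_{\alpha-\one_F,C}$ is automatic, because each group is one-dimensional over $\Ftwo$ with a unique nonzero element.

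Before that, some bookkeeping. First, $(\one_F,B)$ and $(\alpha-\one_F,C)$ are tope-admissible. The level sets of $\alpha-\one_F$ are $U_{j+1}(\alpha)$, hence they are flats incident to $C$. Also $B\setminus F=C\setminus F=X$, since $B$ and $C$ differ only inside $U_2(\alpha)\subseteq F$; so $B\setminus F$ is a covector. Second, $\kappa_M(\alpha)=\rk F+\kappa_M(\alpha-\one_F)$ holds directly from \eqref{kappa-def}. So by \Cref{Koizumi-basis} the source and target are both $\Ftwo$, and the map has the correct degree.

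Next, reduce to the case $F=E$ using \Cref{chain-localization-special}. Because $\alpha$ is supported on $F$ and $A,B,C\in\T_X$, the commutative square \eqref{facial-localization-split} identifies $q$ with the split map for $\M|F$, $\bar\alpha$, and $\bar C$. The restriction $\M|F$ is again simple and realizable (restrict the arrangement), and $(\bar\alpha,\bar C)$ is tope-admissible with $U_1(\bar\alpha)=F$ the full ground set. So we may assume $U_1(\alpha)=E$, that is, $\alpha\geq\one_E$.

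In this situation I would use Koizumi's periodicity theorem, and more precisely its proof. Koizumi's argument for \cite[Theorem 3.2]{Koizumi2026a} (via his Proposition 5.4, which \Cref{split-quotient} generalizes) shows that the non-special subcomplex $N^{\one_E}_{*,\alpha}(\G;\ZZ)_{A\to C}$ is acyclic. This means $q$ is a quasi-isomorphism onto $\MC_{*,\one_E}(\G)_{A\to B}\otimes\MC_{*,\alpha-\one_E}(\G)_{B\to C}$. The first factor has $A=-B$ antipodal, and by Edelman--Walker (\Cref{thm:EW}) the interval $(A,B)$ is a sphere $S^{\rk M-2}$. Hence $\MH_{*,\one_E}(\G;\ZZ)_{A\to B}\cong\ZZ$, concentrated in degree $\rk M$. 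The Künneth theorem for free complexes then gives the periodicity isomorphism, realized by $q$.

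The main obstacle is making sure the periodicity isomorphism really is induced by $q$, i.e.\ that $N^{\one_E}$ is acyclic in the relevant range, rather than the two groups being merely abstractly isomorphic. If Koizumi's statement does not provide this directly, I would argue as follows. Over $\Ftwo$ both sides are one-dimensional, so it suffices to show $q_*$ is nonzero on the generator. I would take Koizumi's face-flag cycle representing $u_{\alpha,C}^\vee$ and check that some summand of it is $\one_E$-special, with prefix the top face-flag cycle for $x_{E,B}$ and suffix the cycle for $(\alpha-\one_E,C)$. Then its image under $q$ is the tensor product of the two generators, which is nonzero in homology.
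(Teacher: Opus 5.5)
Your proposal is correct and follows essentially the same route as the paper: you localize to $\M|F$ via \Cref{chain-localization-special}, then invoke Koizumi's acyclicity of the $\one$-nonspecial subcomplex so that $q_{\one_F,\alpha-\one_F,C}$ is a quasi-isomorphism, then dualize via \Cref{cup-dual-split} and apply K\"unneth with the one-dimensional first factor. The fallback in your last paragraph is unnecessary, since the paper relies on exactly that acyclicity; you should also dispose of the trivial case $\alpha=\zero$ separately, where $F=\varnothing$, $B=C$, and the map is multiplication by $e_C$.
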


\begin{proof}
If $\alpha=\zero$, then $F=\varnothing$, $B=C$, and the assertion is the identity map given by the idempotent $e_C$. Next assume that $F$ is nonempty. First note that $(\alpha-\one_F,C)$ is tope-admissible, because
\[
U_j(\alpha-\one_F)=U_{j+1}(\alpha),\quad j\geq1.
\]
Moreover, $B$ differs from $C$ only on $F$, so $B\setminus F=C\setminus F$ is a covector. Hence the class $x_{F,B}$ and the source of the displayed map are both defined.

Use the notation of \Cref{chain-localization-special}. The restricted crossing vector $\bar\alpha=\alpha|_F$ has full support on $F$, so $\bar\alpha\geq\one^F$. The oriented matroid $\M|F$ is realized by the subarrangement indexed by $F$ and has rank $\rk F$. Moreover, $A=\eps_F B$, so $\bar B=-\bar A$ in $\T(\M|F)$. Koizumi's acyclicity theorem for $\one$-nonspecial chains \cite[Section 5]{Koizumi2026a} therefore gives
\[
H_*\left(N^{\one^F}_{*,\bar\alpha}(\G(\M|F);\Ftwo)_{\bar A\to\bar C}\right)=0.
\]
Indeed, the nonspecial complex decomposes as the direct sum of its fixed-endpoint subcomplexes, and the differential preserves the endpoints. By \eqref{facial-localization-nonspecial}, the ambient nonspecial subcomplex $N^{\one_F}_{*,\alpha}(\G;\Ftwo)_{A\to C}$ is also acyclic. Hence the split map
\[
q_{\one_F,\alpha-\one_F,C}:\MC_{*,\alpha}(\G;\Ftwo)_{A\to C}\to\MC_{*,\one_F}(\G;\Ftwo)_{A\to B}\otimes_{\Ftwo}\MC_{*,\alpha-\one_F}(\G;\Ftwo)_{B\to C}
\]
is a quasi-isomorphism.

All chain groups are finite-dimensional over $\Ftwo$, so dualizing this quasi-isomorphism gives a quasi-isomorphism of cochain complexes. By \Cref{cup-dual-split}, that dual map is the endpoint-restricted cup product. The first tensor factor has cohomology concentrated in degree $\rk F$ and is one-dimensional there, with its unique nonzero class equal to $x_{F,B}$ by \Cref{Koizumi-basis}. The K\"unneth theorem over $\Ftwo$ therefore shows that cup product with $x_{F,B}$ induces an isomorphism from the second factor to the target.

Finally,
\[
\kappa_M(\alpha)=\rk F+\kappa_M(\alpha-\one_F),
\]
because the level sets of $\alpha-\one_F$ are $U_{j+1}(\alpha)$. The source and target in the asserted bidegrees are both one-dimensional. The isomorphism sends a nonzero class to a nonzero class, and over $\Ftwo$ these are the canonical classes. Hence
\[
u_{\alpha,C}=x_{F,B}\smile u_{\alpha-\one_F,C}.
\]
\end{proof}

Iterating gives a canonical standard monomial.

\begin{corollary}\label{standard-factorization}
Let $(\alpha,C)$ be tope-admissible and write its nonempty level flats, with repetitions, as
\[
F_1\supseteq F_2\supseteq\cdots\supseteq F_m,\quad F_j=U_j(\alpha).
\]
Set $C_m=C$ and recursively $C_{j-1}=\eps_{F_j}C_j$. Then every $x_{F_j,C_j}$ is defined and
\begin{equation}\label{standard-factorization-eq}
u_{\alpha,C}=x_{F_1,C_1}\smile x_{F_2,C_2}\smile\cdots \smile x_{F_m,C_m}.
\end{equation}
\end{corollary}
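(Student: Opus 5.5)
The plan is to induct on $m=m(\alpha)$, using \Cref{periodicity-product} once per step. If $m=0$ then $\alpha=\zero$ and the product is empty, so we read it as $e_C=u_{\zero,C}$ and there is nothing to prove. For $m\ge 1$, set $F=F_1=U_1(\alpha)$ and $\alpha'=\alpha-\one_{F_1}$.

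\Cref{periodicity-product} gives
\[
u_{\alpha,C}=x_{F_1,B}\smile u_{\alpha',C},\qquad B=\eps_{\alpha'}C,
\]
and its proof shows that $(\alpha',C)$ is again tope-admissible. The upper level sets of $\alpha'$ are $U_j(\alpha')=U_{j+1}(\alpha)=F_{j+1}$, so its level flats with repetitions are $F_2\supseteq\cdots\supseteq F_m$. The recursion in the statement, started from $C_m=C$, produces the same topes $C_m,\dots,C_1$ whether it is run for $\alpha$ or for $\alpha'$. The induction hypothesis applied to $(\alpha',C)$ therefore yields
\[
u_{\alpha',C}=x_{F_2,C_2}\smile\cdots\smile x_{F_m,C_m},
\]
with every factor defined.

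Two checks remain. First, the tope $B$ must equal $C_1$. Iterating the recursion gives $C_1=\eps_{F_2}\cdots\eps_{F_m}C$, and reversals commute and square to the identity, so $C_1$ is $C$ reversed on exactly the elements lying in an odd number of $F_2,\dots,F_m$. Since $\alpha'_e=\#\{j\ge 2: e\in F_j\}$, this set is the odd support of $\alpha'$, hence $C_1=\eps_{\alpha'}C=B$. Second, $x_{F_1,C_1}$ must be defined, i.e.\ $C_1\setminus F_1\in\Lcov(\M)$. This holds because $C_1$ differs from $C$ only on $F_2\subseteq F_1$, so $C_1\setminus F_1=C\setminus F_1$, which is a covector by tope-admissibility of $(\alpha,C)$. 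The same argument shows that $C_j\setminus F_j=C\setminus F_j$ for every $j$, which is already covered by the induction.

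Associativity of the cup product then gives the displayed factorization. There is no genuine obstacle; the only point needing care is the parity bookkeeping that identifies $B$ with $C_1$.
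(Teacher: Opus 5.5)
Your proposal is correct and follows essentially the same argument as the paper, which proves the corollary by repeated application of \Cref{periodicity-product} together with the observation that $C_j\setminus F_j=C\setminus F_j$ is a covector, because $C_j$ differs from $C$ only inside $F_{j+1}\subseteq F_j$. Your explicit induction on $m$ makes precise what the paper leaves to the phrase ``repeated applications'', including the parity check $C_1=\eps_{\alpha-\one_{F_1}}C$ and the index shift for the level flats of $\alpha-\one_{F_1}$.
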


\begin{proof}
Since $F_{j+1}\subseteq F_j$, the tope $C_j$ differs from $C$ only on $F_j$. Hence $C_j\setminus F_j=C\setminus F_j$ is a covector. Repeated applications of \Cref{periodicity-product} give the result.
\end{proof}

\section{Integral bases for antipodal intervals}\label{integral-basis}

This section proves the topological tools used in next section. The result is integral, although the rest of the paper only needs its reduction modulo $2$.

\subsection{A simultaneous deletion lemma}

We begin with a useful elementary observation about finite posets.

\begin{lemma}\label{simultaneous-deletion}
Let $P$ be a finite poset and let $S\subseteq P$.
\begin{enumerate}[label=\textup{(\roman*)}]
\item If $P_{>x}$ is contractible for every $x\in S$, then the inclusion $P\setminus S\hookrightarrow P$ is a homotopy equivalence.
\item If $P_{<x}$ is contractible for every $x\in S$, then the inclusion $P\setminus S\hookrightarrow P$ is a homotopy equivalence.
\end{enumerate}
\end{lemma}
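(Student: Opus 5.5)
The plan is to use Quillen's fiber lemma, or more elementarily to remove the elements of $S$ one at a time while keeping the relevant contractibility hypothesis valid at every stage. We prove (i); (ii) then follows by applying (i) to the opposite poset $P^{\mathrm{op}}$. This works because the order complex of $P^{\mathrm{op}}$ equals that of $P$, and $P^{\mathrm{op}}_{>x}=P_{<x}$.

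For (i), order $S=\{x_1,\ldots,x_n\}$ by a linear extension of the \emph{reverse} order, so that elements higher in $P$ are deleted first. Concretely, we choose the listing so that whenever $x_i<x_j$ in $P$ with both in $S$, we have $j<i$. Set $P_0=P$ and $P_i=P_{i-1}\setminus\{x_i\}$.

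The key point is that when $x_i$ is deleted from $P_{i-1}$, its upper link in the current poset is unchanged: $(P_{i-1})_{>x_i}=P_{>x_i}$. Indeed, every element of $S$ above $x_i$ was already deleted, but only elements of $S$ have been removed, and those removed elements are not above $x_i$... This is the wrong way round. Deleting the elements of $S$ above $x_i$ changes $P_{>x_i}$, so the ordering must be reversed. We therefore delete the \emph{minimal} elements first: list $S$ by a linear extension of $P$ itself, so that $x_i<x_j$ implies $i<j$. Then when $x_i$ is removed, no element of $S$ lying strictly above $x_i$ has been removed yet. Since only elements of $S$ are ever removed, this gives $(P_{i-1})_{>x_i}=P_{>x_i}$, which is contractible by hypothesis.

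The single-point step is standard. In the order complex $\Delta(P_{i-1})$, the link of the vertex $x_i$ is the join $\Delta((P_{i-1})_{<x_i})*\Delta((P_{i-1})_{>x_i})$. This join is contractible because one of its factors is contractible. Hence $\Delta(P_{i-1})$ is obtained from $\Delta(P_i)$ by attaching the closed star of $x_i$, which is a cone, along the link, which is contractible. This gluing does not change the homotopy type, so the inclusion $\Delta(P_i)\hookrightarrow\Delta(P_{i-1})$ is a homotopy equivalence.

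Composing these $n$ equivalences gives the inclusion $P\setminus S\hookrightarrow P$, which is therefore a homotopy equivalence.

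The only delicate step is the ordering bookkeeping just described: we must check that the hypothesis on $P_{>x}$ is preserved as deletions proceed, which is exactly what deleting lower elements first guarantees. Everything else is the routine link computation.
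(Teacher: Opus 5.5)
Your proof is correct. It uses the same basic mechanism as the paper, successive one-point deletions, but handles the step you identified as delicate, the order of deletion, in the opposite way.

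The paper deletes a \emph{maximal} element $x$ of $S$ first. This does change the upper intervals of the elements $y<x$ that remain in $S$. The paper therefore has to re-verify the hypothesis, by a second application of the one-point deletion lemma inside $P_{>y}$, using $(P_{>y})_{>x}=P_{>x}$, which gives $P_{>y}\setminus\{x\}\simeq P_{>y}$.

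Your bottom-up order (a linear extension of $P$ restricted to $S$) makes this re-verification unnecessary. When $x_i$ is removed, none of $x_1,\ldots,x_{i-1}$ lies above it, so $(P_{i-1})_{>x_i}=P_{>x_i}$ exactly. The only intervals you disturb are lower ones, and these are irrelevant because the link $\Delta((P_{i-1})_{<x_i})*\Delta((P_{i-1})_{>x_i})$ is contractible as soon as its upper factor is. The paper's induction does not depend on fixing a global ordering in advance; its extra check works whichever element is deleted first. Your version trades that flexibility for a single use of the deletion step.

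You also prove the one-point step yourself, via $\Delta(P_{i-1})=\Delta(P_i)\cup\overline{\mathrm{St}}(x_i)$ glued along the link. The paper simply cites Barmak's Lemma 3.1, so your argument is self-contained where the paper's is shorter.

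When you write this up, delete the aborted top-down paragraph. As it stands it asserts that the upper link is unchanged under the top-down order, which is false, and only afterwards retracts it.
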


\begin{proof}
It is enough to prove (i), since (ii) is its order dual. We induct on $|S|$. Choose a maximal element $x$ of $S$. Its strict upper interval $P_{>x}$ contains no other element of $S$, so deleting $x$ is a homotopy equivalence by the standard one-point poset deletion lemma (see \cite[Lemma 3.1]{Barmak2011}). For $y\in S\setminus\{x\}$, the upper interval of $y$ after deleting $x$ is either unchanged or is $P_{>y}\setminus\{x\}$. In the latter case, the strict upper interval of $x$ inside $P_{>y}$ is $P_{>x}$, which is contractible. Hence
\[
P_{>y}\setminus\{x\}\simeq P_{>y}
\]
by the one-point deletion lemma. The remaining upper intervals are therefore contractible, and induction applies.
\end{proof}

\subsection{Boolean embedding}

Recall the discussion in \Cref{initial-tope-graph}. Let $\M$ have rank $r$, let
\[
\F:\quad \varnothing=F_0\subsetneq F_1\subsetneq\cdots\subsetneq F_r=E
\]
be a complete flag of flats, and put $S_i=F_i\setminus F_{i-1}$. Fix a tope $A$ incident to the flag, equivalently $A\in\T(\M_{\mathcal F})$. Then $A_J=\eps_{\bigcup_{i\in J}S_i}A$ is a tope for $J\subseteq[r]$ with
\[
\Sep_\M(A,A_J)=\bigcup_{i\in J}S_i.
\]
Therefore $J\mapsto A_J$ defines a poset embedding
\begin{equation}\label{boolean-embedding}
s:\Bl_r\hookrightarrow [A,-A]_{\G}
\end{equation}
from the Boolean lattice. It sends $\varnothing$ to $A$ and $[r]$ to $-A$. Abbreviate
\[
P=(A,-A)_{\G},\quad \overline{\Bl}_r=\Bl_r\setminus\{\varnothing,[r]\}.
\]
The restriction of $s$ gives an embedding $s:\overline{\Bl}_r\hookrightarrow P$.

Choose an element $e_i\in S_i$ and put
\[
\bb=\{e_1,\ldots,e_r\}.
\]
Since $F_{i-1}$ is a flat and $e_i\notin F_{i-1}$, $\bb$ is a basis of $M$. Restricting sign vectors to $\bb$ gives an order-preserving map
\[
t:[A,-A]_{\G}\to \Bl_r,\quad t(C)=\{i\in[r]\mid C_{e_i}\neq A_{e_i}\}.
\]
In particular, $\rho(A_J)=J$. We single out the topes in $P$ that are mapped by $t$ to $\varnothing$ and $[r]$ respectively,
\[
L_+=\{C\in P\mid t(C)=\varnothing\},\quad L_-=\{C\in P\mid t(C)=[r]\},
\]
and put
\[
P^{\mathrm{mid}}=P\setminus(L_+\cup L_-).
\]

\begin{lemma}\label{basis-separation}
Let $C<D$ in $[A,-A]_{\G}$ and suppose $\bb\subseteq\Sep(C,D)$. Then the open interval $(C,D)_{\G}$ is either equal to $P$ or is contractible.
\end{lemma}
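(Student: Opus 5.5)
The plan is to read the statement off the Edelman--Walker theorem (\Cref{thm:EW}) applied to the pair $C,D$. By that theorem, the open interval $(C,D)_{\G}$ is contractible unless $[C,D]_{\G}$ is facial with respect to some covector $X\in\Lcov(\M)$. So we may assume $[C,D]_{\G}=\T(\M)_X$ for some $X$. It then remains to show that this forces $C=A$ and $D=-A$, in which case $(C,D)_{\G}=(A,-A)_{\G}=P$.

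\emph{Step 1: $X=\zero$.} Both $C$ and $D$ lie in $\T(\M)_X$, so both agree with $X$ on every coordinate where $X$ is nonzero. Hence $\Sep(C,D)\subseteq z(X)$, and by hypothesis $\bb\subseteq\Sep(C,D)\subseteq z(X)$. Since $z(X)$ is a flat of $M$ containing the basis $\bb$, it follows that $z(X)=\cl_M(\bb)=E$, that is, $X=\zero$. Consequently $[C,D]_{\G}=\T(\M)$ is the whole tope set.

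\emph{Step 2: $D=-C$.} I would use the standard description of intervals in the tope graph of a simple oriented matroid, where the metric is $d_{\G}=|\Sep|$:
\[
[C,D]_{\G}=\{T\in\T(\M)\mid \Sep(C,T)\subseteq\Sep(C,D)\}.
\]
This holds because a tope $T$ lies on a geodesic from $C$ to $D$ exactly when $\vec d(C,T)+\vec d(T,D)=\vec d(C,D)$. The tope $-C$ lies in $\T(\M)=[C,D]_{\G}$, so $E=\Sep(C,-C)\subseteq\Sep(C,D)$. Therefore $D=-C$.

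\emph{Step 3: $C=A$.} Since $A\preceq C\preceq D$ along a minimal walk from $A$ to $-A$, the same description of intervals gives $\Sep(A,C)\subseteq\Sep(A,D)$. On the other hand, $D=-C$ gives $\Sep(A,D)=E\setminus\Sep(A,C)$. Combining the two, $\Sep(A,C)=\varnothing$, so $C=A$ and $D=-A$. This yields $(C,D)_{\G}=P$.

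The only step needing care is the interval description in Step 2, together with the fact that the partial order on $[A,-A]_{\G}$ is inclusion of separating sets from $A$. I expect this to be the main (though mild) obstacle. Both facts follow directly from $d_{\G}(A,B)=|\Sep(A,B)|$ and the isometric embedding into the cube $\{+,-\}^E$, which forces a geodesic to cross each separating hyperplane exactly once.
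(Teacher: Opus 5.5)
Your proof is correct and follows the paper's argument: you apply \Cref{thm:EW}, observe that a facial interval $[C,D]_{\G}=\T(\M)_X$ has $\bb\subseteq\Sep(C,D)\subseteq z(X)$, conclude $X=\zero$, and then deduce $C=A$ and $D=-A$. Your Steps 2--3 spell out the paper's one-line claim that $[C,D]_{\G}=[A,-A]_{\G}$ forces $C=A$ and $D=-A$, and you need only the inclusion $\Sep(C,D)\subseteq z(X)$, not the equality the paper states.
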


\begin{proof}
If $[C,D]_{\G}$ is facial with respect to $X$, then $z(X)=\Sep(C,D)$. A flat containing the basis $\bb$ is all of $E$, so $X=\zero$ and $[C,D]_{\G}=[A,-A]_{\G}$. This forces $C=A$ and $D=-A$. Every other interval is contractible by \Cref{thm:EW}.
\end{proof}

\begin{proposition}\label{mid-homotopy}
The inclusion $P^{\mathrm{mid}}\hookrightarrow P$ is a homotopy equivalence.
\end{proposition}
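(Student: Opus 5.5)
The plan is to apply the simultaneous deletion lemma (\Cref{simultaneous-deletion}) twice: first to delete $L_+$ using upper intervals, then to delete $L_-$ using lower intervals. All the needed contractibility comes from \Cref{basis-separation}. Throughout, I use that $[A,-A]_{\G}$ is all of $\T(\M)$, ordered by $C\prec D$ iff $\Sep(A,C)\subseteq\Sep(A,D)$. For $C\prec D$, the poset interval between $C$ and $D$ coincides with the graph interval $[C,D]_{\G}$. Hence strict upper and lower sets inside $P$ are open graph intervals: $P_{>C}=(C,-A)_{\G}$ and $P_{<D}=(A,D)_{\G}$.

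\emph{Step 1: delete $L_+$.} Let $C\in L_+$. Then $C$ agrees with $A$ on $\bb$, so $-A$ disagrees with $C$ on all of $\bb$, i.e.\ $\bb\subseteq\Sep(C,-A)$. By \Cref{basis-separation}, $(C,-A)_{\G}$ is either $P$ or contractible. It cannot equal $P$, since $C\neq A$ and the closed interval $[C,-A]_{\G}$ does not contain $A$. So it is contractible, and the Edelman--Walker dichotomy \Cref{thm:EW} covers the degenerate cases. \Cref{simultaneous-deletion}(i) with $S=L_+$ then shows that $P':=P\setminus L_+\hookrightarrow P$ is a homotopy equivalence.

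\emph{Step 2: delete $L_-$ from $P'$.} Let $D\in L_-$. Its strict lower set in $P'$ is $(A,D)_{\G}\setminus L_+$, and I must show this is contractible. By the symmetric argument, $\bb\subseteq\Sep(A,D)$ and $D\neq -A$, so $(A,D)_{\G}$ is contractible by \Cref{basis-separation}. To pass to $(A,D)_{\G}\setminus L_+$, apply \Cref{simultaneous-deletion}(i) inside the poset $Q=(A,D)_{\G}$ with $S=L_+\cap Q$. For $C\in S$, its strict upper set in $Q$ is $(C,D)_{\G}$. Since $C$ agrees with $A$ and $D$ agrees with $-A$ on $\bb$, we have $\bb\subseteq\Sep(C,D)$. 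As $(C,D)\neq(A,-A)$, \Cref{basis-separation} shows $(C,D)_{\G}$ is contractible. Therefore $Q\setminus L_+\simeq Q$ is contractible. Now \Cref{simultaneous-deletion}(ii) applied to $P'$ with $S=L_-$ shows that $P^{\mathrm{mid}}=P'\setminus L_-\hookrightarrow P'$ is a homotopy equivalence. Composing with Step 1 gives the proposition.

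The main obstacle is the second deletion. The two families cannot be removed in one application of \Cref{simultaneous-deletion}, because that lemma requires a uniform direction (all upper or all lower intervals contractible). After $L_+$ is gone, the lower intervals of elements of $L_-$ have changed, so contractibility must be re-established. The nested application inside $(A,D)_{\G}$ handles this, but it relies on the observation that every interval between an element of $L_+$ and an element of $L_-$ is still separated by all of $\bb$. A minor point to check is the degenerate low-rank case, where some open intervals may be empty (for $r\leq 1$, $P$ itself is trivial). These should be absorbed by the conventions of \Cref{thm:EW}.
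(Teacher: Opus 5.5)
Your proposal is correct and matches the paper's proof step for step. The paper also deletes $L_+$ first via the contractible upper intervals $(C,-A)_{\G}$, then deletes $L_-$ from $P\setminus L_+$ via lower intervals. It gets contractibility of $(A,D)_{\G}\setminus L_+$ the same way you do: by removing $L_+\cap(A,D)_{\G}$ from the contractible $(A,D)_{\G}$, using that each $(C,D)_{\G}$ is contractible by \Cref{basis-separation}.
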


\begin{proof}
For $C\in L_+$, the basis is contained in $\Sep(C,-A)$. Since $A\not\in L_+$, $C\neq A$ and \Cref{basis-separation} shows that $P_{>C}=(C,-A)_{\G}$ is contractible. By \Cref{simultaneous-deletion},
\[
R:=P\setminus L_+\simeq P.
\]

Now let $D\in L_-$. The basis is contained in $\Sep(A,D)$, and $D\neq -A$, so $(A,D)_{\G}$ is contractible. Its intersection with $L_+$ can be deleted without changing homotopy type. In fact, for $C\in L_+\cap(A,D)_{\G}$, the interval $(C,D)_{\G}$ is contractible by \Cref{basis-separation}. Therefore
\[
R_{<D}:=(A,D)_{\G}\setminus L_+
\]
is contractible. Another application of \Cref{simultaneous-deletion} gives
\[
P^{\mathrm{mid}}=R\setminus L_-\simeq R\simeq P.
\]
\end{proof}

Denote the restriction of $t$ to $P^{\mathrm{mid}}$ by $\overline{t}:P^{\mathrm{mid}}\to\overline{\Bl}_r$. Note that $s(\overline{\Bl}_r)\subseteq P^{\mathrm{mid}}$. Then
\[
\overline{t}\circ s=\id_{\overline{\Bl}_r}.
\]

\begin{theorem}\label{basis-thm}
The embedding $s:\overline{\Bl}_r\hookrightarrow P$ induces an isomorphism
\[
s_*:\widetilde H_{r-2}(\overline{\Bl}_r;\ZZ)\xrightarrow{\cong}\widetilde H_{r-2}(P;\ZZ).
\]
\end{theorem}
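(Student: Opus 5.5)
Both $\overline{\Bl}_r$ and $P$ are homotopy spheres $S^{r-2}$: the first is the boundary of a simplex, the second by \Cref{thm:EW}, since $[A,-A]_{\G}$ is facial with respect to $\zero$ and $\rk E=r$. So both groups in the statement are $\ZZ$, and it suffices to show $s_*$ is an isomorphism. The plan is to factor $s$ through $P^{\mathrm{mid}}$ and use the retraction $\overline t$. We write $s=\iota\circ s'$, where $s':\overline{\Bl}_r\to P^{\mathrm{mid}}$ and $\iota:P^{\mathrm{mid}}\hookrightarrow P$ is the inclusion. By \Cref{mid-homotopy}, $\iota$ is a homotopy equivalence, so $\iota_*$ is an isomorphism, and it remains to show $s'_*$ is an isomorphism on $\widetilde H_{r-2}$.

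First we check that $\overline t$ is well defined and order preserving into $\overline{\Bl}_r$. By definition of $P^{\mathrm{mid}}$, no element is sent to $\varnothing$ or $[r]$. Monotonicity of $t$ holds because $C\prec D$ in $[A,-A]_{\G}$ means $\Sep(A,C)\subseteq\Sep(A,D)$. Hence $\overline t$ induces a simplicial map of order complexes. Since $\overline t\circ s'=\id_{\overline{\Bl}_r}$, we get $\overline t_*\circ s'_*=\id$ on $\widetilde H_{r-2}(\overline{\Bl}_r;\ZZ)\cong\ZZ$. Because $\widetilde H_{r-2}(P^{\mathrm{mid}};\ZZ)\cong\widetilde H_{r-2}(P;\ZZ)\cong\ZZ$, the map $s'_*:\ZZ\to\ZZ$ has a left inverse. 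It is therefore injective and split, so its image is a direct summand of $\ZZ$ isomorphic to $\ZZ$, which must be all of $\ZZ$. Thus $s'_*$ is an isomorphism, and so is $s_*=\iota_*s'_*$.

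The case $r=1$ is degenerate and needs a separate check. Then $\overline{\Bl}_1=\varnothing$, and the convention $\widetilde H_{-1}(\varnothing)=\ZZ$ applies. Also $P=(A,-A)_{\G}$ is empty, since $A$ and $-A$ are adjacent when the simple matroid has rank one. The statement is then trivial. For $r=2$, reduced $H_0$ works the same way as above.

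The main obstacle is not the formal argument but confirming that the identification of $P$ with $S^{r-2}$ is legitimate. This requires that $[A,-A]_{\G}$ is the entire tope set $\T_{\zero}$ (every tope lies on a geodesic from $A$ to $-A$), so that $\rk z(\zero)=r$ in \Cref{thm:EW}. Everything else is automatic once \Cref{mid-homotopy} is available: the homotopy-theoretic work was already done there, in deleting $L_\pm$.
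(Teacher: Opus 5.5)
Your proof is correct and follows the paper's route. The paper's one-line proof also treats $r=1$ as the empty case, and for $r\geq 2$ combines the retraction $\overline{t}\circ s=\id_{\overline{\Bl}_r}$ with \Cref{mid-homotopy}, implicitly using \Cref{thm:EW} to know both groups are $\ZZ$. The step you flag as the main obstacle is immediate: every tope $C$ satisfies $\Sep(A,C)\sqcup\Sep(C,-A)=E$, so concatenating geodesics $A\to C\to -A$ gives a geodesic through $C$, and hence $[A,-A]_{\G}=\T_{\zero}$.
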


\begin{proof}
For $r=1$, both posets are empty and the assertion is immediate. Assume $r\ge2$. The result follows from $\overline{t}\circ s=\id_{\overline{\Bl}_r}$ and \Cref{mid-homotopy}.
\end{proof}

\subsection{Integral basis}

For a permutation $\pi\in \mathfrak{S}_r$, put
\[
A_{\pi,j}=\eps_{S_{\pi(1)}\cup\cdots\cup S_{\pi(j)}}A,\quad 0\leq j\leq r.
\]
Define the integral proper chain
\begin{equation}\label{integral-cycle}
\omega_{\F,A}=\sum_{\pi\in \mathfrak{S}_r}\sgn(\pi)(A_{\pi,0},A_{\pi,1},\ldots,A_{\pi,r})\in\MC_{r,\one_E}(\G;\ZZ).
\end{equation}

\begin{corollary}\label{antipodal-basis}
The chain $\omega_{\mathcal F,A}$ is a cycle and its class is a generator of
\[
\MH_{r,\one_E}(\G(\M);\ZZ)_{A\to -A}\cong\ZZ.
\]
\end{corollary}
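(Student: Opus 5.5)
The plan is to identify the fixed-endpoint magnitude complex with the order complex of the interval and then transport the Boolean cycle through \Cref{basis-thm}. The first step is the standard reduction for chains of crossing vector $\one_E$ from $A$ to $-A$. Since $\sum_i\vec d(A_{i-1},A_i)=\one_E=\vec d(A,-A)$, every such chain is a strictly increasing chain $A=A_0<A_1<\cdots<A_k=-A$ in the poset $[A,-A]_{\G}$. In such a chain every interior node is smooth, so $\partial=\sum_{i=1}^{k-1}(-1)^i\partial_i$ deletes interior nodes with the usual signs. Sending $(A,A_1,\ldots,A_{k-1},-A)$ to the simplex $(A_1,\ldots,A_{k-1})$ of the order complex of $P=(A,-A)_{\G}$ then gives an isomorphism of chain complexes
\[
\MC_{k,\one_E}(\G;\ZZ)_{A\to -A}\cong\widetilde C_{k-2}(\Delta(P);\ZZ),
\]
up to a global sign convention. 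This is the standard identification used in \cite{Koizumi2026,Koizumi2026a}, and I will use the augmented complex so that the case $k=1$ corresponds to the empty simplex.

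Under this identification, $\omega_{\F,A}$ is the image under $s$ of the chain
\[
\sum_{\pi\in\mathfrak S_r}\sgn(\pi)\bigl(\{\pi(1)\},\{\pi(1),\pi(2)\},\ldots,\{\pi(1),\ldots,\pi(r-1)\}\bigr)
\]
in $\Delta(\overline{\Bl}_r)$, because $s(\{\pi(1),\ldots,\pi(j)\})=A_{\pi,j}$. The order complex $\Delta(\overline{\Bl}_r)$ is the barycentric subdivision of the boundary of the $(r-1)$-simplex, and the displayed chain is its fundamental cycle. To check that it is a cycle directly, observe that deleting the $j$-th interior node, for $1\le j\le r-1$, from the terms for $\pi$ and for $\pi\circ(j\ j{+}1)$ gives the same flag. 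These two terms carry opposite signs, so they cancel. The fundamental cycle generates $\widetilde H_{r-2}(\overline{\Bl}_r;\ZZ)\cong\widetilde H_{r-2}(S^{r-2};\ZZ)\cong\ZZ$.

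Since $s$ is a chain map, $\omega_{\F,A}$ is a cycle. By \Cref{basis-thm}, $s_*$ is an isomorphism onto $\widetilde H_{r-2}(P;\ZZ)$, so $[\omega_{\F,A}]$ generates this group. Under the identification of the first paragraph, that group is $\MH_{r,\one_E}(\G;\ZZ)_{A\to-A}$. \Cref{Koizumi-basis} gives $\kappa_M(\one_E)=r$ and rank one, since $(\one_E,-A)$ is tope-admissible with $-A\setminus E=\zero$. Alternatively, \Cref{thm:EW} gives $P\simeq S^{r-2}$, because $[A,-A]_{\G}$ is facial for $\zero$.

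The edge case $r=1$ is handled separately. There $E$ is a single element, $\omega_{\F,A}=(A,-A)$, and the claim is immediate.

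I expect the main obstacle to be sign bookkeeping. The identification of the magnitude complex with the order complex must match Hepworth's differential exactly, including the degree shift by $2$ and the augmentation. I also need to confirm that this identification intertwines $s$ with the map $s$ of \Cref{boolean-embedding} up to a global sign, which does not affect the generator statement.
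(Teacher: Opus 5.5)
Your proposal is correct and is essentially the paper's argument. You identify $\MC_{*,\one_E}(\G;\ZZ)_{A\to -A}$ with the augmented simplicial chains of $\Delta((A,-A)_{\G})$ shifted by two, recognize $\omega_{\F,A}$ as the $s$-image (up to a global sign) of the barycentric fundamental cycle of $\Delta(\overline{\Bl}_r)$, and conclude with \Cref{basis-thm}. The only difference is that the paper cites \cite[Proposition~2.3]{Koizumi2026a} for the identification, whereas you verify it directly and also check the cycle condition by transposition cancellation, which is harmless extra detail.
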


\begin{proof}
For a geodesic endpoint pair, magnitude chains identify with the augmented simplicial chains of the open interval, shifted by two \cite[Proposition~2.3]{Koizumi2026a}. Under this identification, \eqref{integral-cycle} maps, up to one global sign, to the barycentric fundamental cycle of $\Delta\overline{\Bl}_r$. Then \Cref{basis-thm} proves the claim.
\end{proof}

\section{The local modular relation}\label{local-modular}

We first deal with a base case. Let $F,G$ be flats. We call $(F,G)$ a \emph{closed modular pair} if $F\cup G$ is a flat and
\begin{equation}\label{modular-def}
\rk F+\rk G=\rk F\cup G+\rk F\cap G.
\end{equation}

\subsection{Direct sum decomposition}

\begin{lemma}\label{modular-direct-sum}
If $(F,G)$ is a closed modular pair, then
\begin{equation}\label{modular-sum-eq}
(\M|F\cup G)/(F\cap G)\cong((\M|F)/(F\cap G))\oplus((\M|G)/(F\cap G)).
\end{equation}
The two factors have ground sets $F\setminus G$ and $G\setminus F$.
\end{lemma}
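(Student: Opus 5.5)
The plan is to reduce to a statement about underlying matroids, and then to promote it to oriented matroids using chirotopes. Put $N=(\M|F\cup G)/(F\cap G)$; it is an oriented matroid on $(F\cup G)\setminus(F\cap G)=(F\setminus G)\sqcup(G\setminus F)$. The matroidal first step is to show that $F\setminus G$ is a separator of the underlying matroid $\underline N$. For $T\subseteq (F\cup G)\setminus(F\cap G)$ we have $\rk_{\underline N}(T)=\rk(T\cup(F\cap G))-\rk(F\cap G)$. Hence
\[
\rk_{\underline N}(F\setminus G)+\rk_{\underline N}(G\setminus F)=\rk F+\rk G-2\rk(F\cap G)=\rk(F\cup G)-\rk(F\cap G)=\rk \underline N,
\]
where the middle equality is \eqref{modular-def}. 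This is exactly the separator condition, so $\underline N=\underline N|(F\setminus G)\oplus \underline N|(G\setminus F)$. Moreover $\underline N|(F\setminus G)$ is the contraction $(M|F)/(F\cap G)$, since its rank function is $\rk(T\cup(F\cap G))-\rk(F\cap G)$ for $T\subseteq F\setminus G$. The same holds with $F$ and $G$ interchanged.

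It remains to show that the orientations agree. I would do this covector-wise, using that the restriction of a contraction is the contraction of a restriction. Concretely, $N|(F\setminus G)=(\M|F)/(F\cap G)$ as oriented matroids, because both have covectors $X|_{F\setminus G}$ for $X\in\Lcov(\M)$ with $X|_{F\cap G}=0$. The key fact I would then invoke is that an oriented matroid whose underlying matroid splits along a separator is the direct sum of its restrictions to the two parts. This is standard (see \cite{Bjorner1999}, e.g.\ via circuits): every signed circuit of $N$ has support a circuit of $\underline N$, so it lies in one part. Its signed circuits are therefore those of $N|(F\setminus G)$ together with those of $N|(G\setminus F)$, which is precisely the circuit set of the direct sum.

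If one prefers to avoid citing that fact, there is a realizable argument in the style of \Cref{admissible-initial-loopless}. Let $W_F,W_G,W_\cap,W_\cup$ be the spans of the vectors indexed by $F,G,F\cap G,F\cup G$. Since $F,G,F\cap G$ are flats and $F\cup G$ is a flat, the dimensions of these spans are the corresponding ranks. Modularity then gives $W_F\cap W_G=W_\cap$ and $W_F+W_G=W_\cup$, hence $W_\cup/W_\cap=(W_F/W_\cap)\oplus(W_G/W_\cap)$, which realizes the direct sum on the nose. The only point needing care, and the one I expect to be the main obstacle, is $W_F\cap W_G=W_\cap$. The inclusion $W_\cap\subseteq W_F\cap W_G$ is clear. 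For the reverse, the dimension count $\dim(W_F\cap W_G)=\rk F+\rk G-\dim(W_F+W_G)$ together with $W_F+W_G=W_{F\cup G}$ and \eqref{modular-def} gives equal dimensions. This is the step where we use that $F\cup G$ is a flat, since we need the rank of $F\cup G$ and not of its closure.
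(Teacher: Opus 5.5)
Your proof is correct and follows the paper's route: the same rank computation shows that $F\setminus G$ is a separator of $(M|F\cup G)/(F\cap G)$, and your signed-circuit argument, together with the covector identification of $N|(F\setminus G)$ with $(\M|F)/(F\cap G)$, spells out what the paper compresses into ``the oriented direct sum structure is inherited from the minors.'' One small correction to your optional realizable argument: flatness of $F\cup G$ is never used there, because $\dim W_S=\rk S=\rk\cl_M(S)$ holds for every subset $S$ and $W_F+W_G=W_{F\cup G}$ always holds, so the lemma is true for any modular pair of flats.
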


\begin{proof}
The ground sets are disjoint and cover $(F\cup G)\setminus (F\cap G)$. In the contraction $N=(M|F\cup G)/(F\cap G)$,
\[
\rk_N(F\setminus G)=\rk F-\rk F\cap G, \quad \rk_N(G\setminus F)=\rk G-\rk F\cap G.
\]
Equation \eqref{modular-def} says that their ranks add to $\rk N$. This is the direct sum criterion for a matroid, and the oriented direct sum structure is inherited from the minors.
\end{proof}

The direct sum makes the tope-incidence conditions transparent.

\begin{lemma}\label{incidence-refinement}
Assume that $(F,G)$ is a closed modular pair and $C\in\T(\M)$ such that $C\setminus (F\cap G)$ and $C\setminus (F\cup G)$ are covectors. Then $C\setminus F,C\setminus G$ and $(\eps_G C)\setminus F$ are covectors. In particular, all single-flat classes \eqref{single-flat} occurring in
\[
x_{F,\eps_G C}\smile x_{G,C}\quad\text{and}\quad x_{F\cup G,\eps_{F\cap G}C}\smile x_{F\cap G,C}
\]
are defined.
\end{lemma}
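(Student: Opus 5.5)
The plan is to work inside the face $Y:=C\setminus(F\cup G)$ and use the direct sum decomposition of \Cref{modular-direct-sum} to build the required covectors coordinate block by coordinate block. First, $Y$ is a covector by hypothesis, and $z(Y)=F\cup G$ because $F\cup G$ is a flat and $C$ is a tope. Set $N=(\M|F\cup G)/(F\cap G)$, with ground set $(F\cup G)\setminus(F\cap G)=(F\setminus G)\sqcup(G\setminus F)$.

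The first step produces a tope $T$ of $N$ from $C$. By hypothesis $Z:=C\setminus(F\cap G)\in\Lcov(\M)$. Its restriction $Z|_{F\cup G}$ is a covector of $\M|F\cup G$ that vanishes on $F\cap G$. By the contraction formula, $T:=Z|_{(F\cup G)\setminus(F\cap G)}=C|_{(F\cup G)\setminus(F\cap G)}$ is a covector of $N$. It has no zero entries, so it is a tope.

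The second step uses the direct sum. By \Cref{modular-direct-sum}, $N\cong N_1\oplus N_2$ with $N_1=(\M|F)/(F\cap G)$ on $F\setminus G$ and $N_2=(\M|G)/(F\cap G)$ on $G\setminus F$. Covectors of a direct sum are arbitrary pairs of covectors of the factors, and the negative of a covector is a covector. Hence each of the following sign vectors on $(F\cup G)\setminus(F\cap G)$ is a covector of $N$:
\[
(0,\ T|_{G\setminus F}),\qquad (T|_{F\setminus G},\ 0),\qquad (0,\ -T|_{G\setminus F}).
\]
Here the first component is indexed by $F\setminus G$ and the second by $G\setminus F$.

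The third step lifts these back to $\M$. By the contraction formula, each of the three covectors extends by $0$ on $F\cap G$ to a covector $X'$ of $\M|F\cup G$. By the restriction formula, $X'=W|_{F\cup G}$ for some $W\in\Lcov(\M)$. The Tits product $Y\circ W$ is a covector of $\M$; it agrees with $C$ outside $F\cup G$ and with $X'$ on $F\cup G$. For the three choices above, $Y\circ W$ equals $C\setminus F$, $C\setminus G$ and $(\eps_GC)\setminus F$ respectively. For the last one, note that $\eps_GC$ agrees with $C$ outside $G$ and equals $-C$ on $G\setminus F$, while both are zeroed on $F$. The only point needing care is the bookkeeping that the zero sets come out exactly right, in particular that zeroing $F\cap G$ is compatible with zeroing all of $F$. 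I expect no genuine obstacle here; all the work is done by \Cref{modular-direct-sum}.

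Finally, for the "in particular" clause, the four single-flat classes require the following conditions.
\begin{itemize}
\item $x_{F,\eps_GC}$ requires $(\eps_GC)\setminus F\in\Lcov(\M)$.
\item $x_{G,C}$ requires $C\setminus G\in\Lcov(\M)$.
\item $x_{F\cup G,\eps_{F\cap G}C}$ requires $(\eps_{F\cap G}C)\setminus(F\cup G)=C\setminus(F\cup G)\in\Lcov(\M)$, which holds because $F\cap G\subseteq F\cup G$.
\item $x_{F\cap G,C}$ requires $C\setminus(F\cap G)\in\Lcov(\M)$, which is the hypothesis.
\end{itemize}
The first two were proved above. All four indexing sets are flats: $F\cup G$ by assumption and $F\cap G$ as an intersection of flats. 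When a flat is empty, the class is $e_B$ by convention.
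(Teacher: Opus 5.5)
Your proof is correct, but it takes a more hands-on route than the paper.

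\textbf{The paper's route.} It argues through flag-initial oriented matroids. By the cited characterization \eqref{initial-topes}, the hypotheses say $C\in\T(\M_\F)$ for the flag $F\cap G\subseteq F\cup G$. By \Cref{modular-direct-sum}, inserting $F$ or $G$ into the flag only splits the middle summand $(\M|F\cup G)/(F\cap G)$ and leaves $\M_\F$ unchanged. Applying \eqref{initial-topes} to the refined flag then gives incidence of $C$ with $F$ and $G$. Finally, $\eps_GC$ stays in $\T(\M_\F)$ because $G$ is a union of direct summands.

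\textbf{Your route.} You unpack this by hand. You read off a full-support covector $T$ of $N=(\M|F\cup G)/(F\cap G)$ from $C\setminus(F\cap G)$. You modify it blockwise in $N_1\oplus N_2$. You then lift back to $\M$ using the restriction and contraction formulas and composition with $Y=C\setminus(F\cup G)$. The three resulting covectors are exactly $C\setminus F$, $C\setminus G$ and $(\eps_GC)\setminus F$, as you verify.

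\textbf{Comparison.} In effect you reprove the special case of \eqref{initial-topes} that is needed. Your argument therefore rests only on the covector axioms and \Cref{modular-direct-sum}, not on the external result. It also sidesteps the implicit identification $(\M|F\cup G)/F=(\M|G)/(F\cap G)$ behind the paper's phrase about inserting $F$ or $G$. The paper's version is shorter and fits the initial-matroid viewpoint reused in \Cref{tight-tope-incidence} and \Cref{modular-straightening}.

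\textbf{One omission in your final paragraph.} By \eqref{single-flat}, $x_{F,B}$ is defined only when $B$ is a tope of $\M$, not merely when $B\setminus F\in\Lcov(\M)$. Topeness is not automatic: with $F=E$ every sign vector $B$ satisfies $B\setminus F=\zero\in\Lcov(\M)$. So you must also check that $\eps_GC$ and $\eps_{F\cap G}C$ are topes.

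This follows at once from what you already have. For $S\in\{G,\,F\cap G\}$,
\[
\eps_SC=(C\setminus S)\circ(-C).
\]
This is a composition of covectors and has full support, so it is a tope. Here $C\setminus G$ is one of the covectors you constructed, and $C\setminus(F\cap G)$ is a hypothesis.

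In the paper's framework this comes for free, since reversing a union of summands keeps you inside $\T(\M_\F)\subseteq\T(\M)$.
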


\begin{proof}
The assumptions say that $C$ is a tope of the initial oriented matroid for the flag $F\cap G\subseteq F\cup G$ (see \Cref{initial-topes}). Its middle summand is $(\M|F\cup G)/(F\cap G)$, which splits as in \eqref{modular-sum-eq}. Inserting $F$ or $G$ in the flag merely separates the two direct summands and does not change the initial oriented matroid. Hence $C$ is incident to $F$ and $G$. Reversing all signs on $G=(F\cap G)\cup(G\setminus F)$ reverses a union of direct summands, so $\eps_G C$ is again a tope of the same initial oriented matroid and is incident to $F$.
\end{proof}

Under the assumptions of \Cref{incidence-refinement}, put
\[
A=\eps_F\eps_G C,\quad B=\eps_G C,\quad D=\eps_{F\cap G}C,\quad \gamma=\one_F+\one_G=\one_{F\cup G}+\one_{F\cap G}.
\]
We use these notations throughout this section. By \eqref{modular-def},
\begin{equation}\label{modular-degree}
\kappa_M(\gamma)=\rk F\cup G+\rk F\cap G=\rk F+\rk G.
\end{equation}

\subsection{Compatible complete flags}

Write
\[
a=\rk F\cap G, \quad b=\rk F-\rk F\cap G, \quad c=\rk G-\rk F\cap G.
\]
In the summand $\M|F\cap G$, let us choose a complete flag of flats incident to $A|_{F\cap G}$
\[
\varnothing\subsetneq S_1 \subsetneq S_1\sqcup S_2 \subsetneq\cdots\subsetneq S_1\sqcup S_2\sqcup\cdots \sqcup S_a=F\cap G,
\]
where $\varnothing\neq S_i\subsetneq F\cap G$. Similarly, we choose a complete flag of flats
\[
\varnothing\subsetneq T_1 \subsetneq T_1\sqcup T_2 \subsetneq\cdots\subsetneq T_1\sqcup T_2\sqcup\cdots \sqcup T_b=F\setminus G
\]
in $(\M|F)/(F\cap G)$ incident to $A|_{F\setminus G}$, and a complete flag of flats
\[
\varnothing\subsetneq V_1 \subsetneq V_1\sqcup V_2 \subsetneq\cdots\subsetneq V_1\sqcup V_2\sqcup\cdots \sqcup V_c=G\setminus F
\]
in $(\M|G)/(F\cap G)$ incident to $A|_{G\setminus F}$. By \Cref{modular-direct-sum,incidence-refinement}, these flags determine compatible Boolean embeddings \eqref{boolean-embedding} for the flats $F\cap G,F,G,F\cup G$. Every sign vector obtained from $A$ by independently reversing the blocks $S_i,T_j,U_k$ is therefore a tope.

For each block $S_i$ introduce two labeled blocks with ordering
\[
S_i^-<S_i^+.
\]
Every $T_j$ and $U_k$ remain one block, with no additional comparabilities. Let $\mathscr{E}$ be the resulting poset of blocks. The cardinality of $\mathscr{E}$ is $d=2a+b+c=\rk F+\rk G$.

For a linear extension $\pi=(e_1,\ldots,e_d)$ of $\mathscr{E}$, start at $A$ and reverse the block labeled by $e_1$, then $e_2$, and so on. Denote the resulting magnitude chain by
\[
\bA_\pi=(A=A_0^\pi,A_1^\pi,\ldots,A_d^\pi=C).
\]
Every $S_i$ is crossed twice, while every $T_j$ and $U_k$ is crossed once, so $\bA_\pi$ has crossing vector $\gamma$. Define
\[
\Omega_{F,G;C}=\sum_{\pi\in\operatorname{LinExt}(\mathscr{E})}\bA_\pi\in\MC_{d,\gamma}(\G;\Ftwo)_{A\to C}.
\]

\begin{lemma}
The chain $\Omega_{F,G;C}$ is a cycle.
\end{lemma}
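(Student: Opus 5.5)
The plan is to show that the terms of $\partial\Omega_{F,G;C}$ cancel in pairs over $\Ftwo$. The pairing is an involution on pairs $(\pi,i)$ that transposes two adjacent incomparable blocks of a linear extension.

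First I check that each $\bA_\pi$ is a genuine proper chain in $\MC_{d,\gamma}(\G;\Ftwo)_{A\to C}$.

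\begin{itemize}
\item Each intermediate sign vector $A^\pi_i$ is obtained from $A$ by independently reversing some of the blocks $S_j,T_k,V_l$, where the block $S_j$ is reversed once if exactly one of $S_j^\pm$ has been used and twice otherwise.
\item By the compatible Boolean embeddings following \Cref{modular-direct-sum,incidence-refinement}, each such vector is a tope.
\item Consecutive nodes differ on a nonempty block, so the chain is proper.
\item The crossing vector is $\gamma$, and the chain ends at $\eps_F\eps_G A=C$.
\end{itemize}

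Next I determine which interior nodes are smooth. Fix $\pi=(e_1,\ldots,e_d)$ and $1\le i\le d-1$. Then $A^\pi_{i-1}$ and $A^\pi_{i+1}$ differ from $A^\pi_i$ by reversing the blocks $e_i$ and $e_{i+1}$, viewed as subsets of $E$. There are two cases.

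\begin{itemize}
\item If these two subsets are disjoint, then $\Sep(A^\pi_{i-1},A^\pi_{i+1})$ is their disjoint union. Since $d_\G=|\Sep|$, the node $A^\pi_i$ is smooth.
\item If they are not disjoint, then, because all blocks partition $F\cup G$ except for the doubling of each $S_j$, we must have $\{e_i,e_{i+1}\}=\{S_j^-,S_j^+\}$. The order in $\mathscr E$ forces $e_i=S_j^-$ and $e_{i+1}=S_j^+$. Then $A^\pi_{i+1}=A^\pi_{i-1}$, so the distance is $0\neq 2|S_j|$ and the node is not smooth, giving $\partial_i\bA_\pi=0$.
\end{itemize}

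Hence $\partial_i\bA_\pi\neq0$ exactly when the labels $e_i$ and $e_{i+1}$ are distinct blocks of $E$. In $\mathscr E$ the only comparabilities are $S_j^-<S_j^+$, so this happens exactly when $e_i$ and $e_{i+1}$ are incomparable.

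For such a pair $(\pi,i)$, let $\pi'$ be $\pi$ with $e_i$ and $e_{i+1}$ transposed. Since the two labels are incomparable, $\pi'$ is again a linear extension, and $\pi'\neq\pi$. The chains $\bA_\pi$ and $\bA_{\pi'}$ agree at every node except node $i$. In both, node $i-1$ is reached after $e_1,\ldots,e_{i-1}$, and node $i+1$ is reached after the same set of blocks, since reversals on disjoint sets commute. Therefore deleting node $i$ gives the same chain:
\[
\partial_i\bA_\pi=\partial_i\bA_{\pi'}.
\]
Moreover, index $i$ is also smooth in $\pi'$, and applying the transposition again returns $\pi$. So $(\pi,i)\mapsto(\pi',i)$ is a fixed-point-free involution on the set of nonzero face terms. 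Over $\Ftwo$ the signs $(-1)^i$ are irrelevant, and the paired terms cancel. Thus $\partial\Omega_{F,G;C}=0$.

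The only delicate point is the case analysis on smoothness. One must confirm that the sole overlaps among the labeled blocks are the pairs $S_j^\pm$, which holds because the blocks $S_j$, $T_k$, $V_l$ partition $F\cup G$. One must also confirm that a non-smooth node contributes zero rather than a degenerate chain, which is built into the definition of $\partial_i$. I expect no real obstacle beyond this bookkeeping. The argument does not lift naively to $\ZZ$, because of signs, but only the mod $2$ statement is claimed.
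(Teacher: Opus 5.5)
Your proof is correct and is essentially the paper's argument. Boundary terms at adjacent incomparable labels are smooth, and each cancels over $\Ftwo$ with the same term from the linear extension obtained by transposing those two labels. An adjacent pair $S_j^-,S_j^+$ gives $A^\pi_{i-1}=A^\pi_{i+1}$, so that node is not smooth and contributes nothing.

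Your extra bookkeeping makes explicit what the paper leaves implicit: that $\bA_\pi$ is a proper chain, that the pairing is a fixed-point-free involution, and that blocks overlap only for $S_j^\pm$.
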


\begin{proof}
Consider a possible boundary term of $\bA_\pi$ obtained by deleting an intermediate node $A_i^\pi$. If $e_i,e_{i+1}$ are incomparable, these two steps reverse distinct blocks, their crossing supports are disjoint and the node is smooth. The same shortened chain occurs for the linear extension $(i,i+1)\pi$, so the two terms cancel over $\Ftwo$. If $\{e_i,e_{i+1}\}$ are $\{S_i^-,S_i^+\}$, they reverse the same block twice.  Then $A_{i-1}^\pi=A_{i+1}^\pi$, so $A_i^\pi$ is not smooth and contributes no boundary. These are all cases.
\end{proof}

\subsection{Splitting the multishuffle}

Let
\[
q_{\one_F,\one_G,C}:\MC_{*,\gamma}(\G;\Ftwo)_{A\to C}\to\MC_{*,\one_F}(\G;\Ftwo)_{A\to B}\otimes_{\Ftwo}\MC_{*,\one_G}(\G;\Ftwo)_{B\to C}
\]
be the split map of \Cref{split-quotient} with $\kk=\Ftwo$.

A linear extension reaches cumulative crossing vector $\one_F$ exactly when all blocks $S_i^-$ and $T_j$ have occurred, while no $S_i^+$ or $U_k$ has occurred. Hence the $\one_F$-special linear extensions are precisely the concatenations
\[
\bigl(\text{a permutation of }S_i^-,T_j\bigr)\bigl(\text{a permutation of }S_i^+,U_k\bigr).
\]
Let $\overline{\omega}_F$ and $\overline{\omega}_G$ denote the mod-$2$ cycles \eqref{integral-cycle} for the compatible complete flags of $F$ and $G$.  We obtain
\[
q_{\one_F,\one_G,C}(\Omega_{F,G;C})=\overline{\omega}_F\otimes\overline{\omega}_G.
\]

By \Cref{antipodal-basis}, both tensor factors represent the unique nonzero classes in their antipodal mod $2$ magnitude homology groups. Therefore
\begin{equation}\label{split-nonzero}
H(q_{\one_F,\one_G,C})[\Omega_{F,G;C}]\neq0.
\end{equation}

\begin{theorem}\label{local-modular-thm}
Let $F,G$ be a closed modular pair and let $C\in\T(\M)$ satisfy $C\setminus (F\cap G),C\setminus (F\cup G)\in\Lcov(\M)$. Then
\[
x_{F,\eps_G C}\smile x_{G,C}=x_{F\cup G,\eps_{F\cap G}C}\smile x_{F\cap G,C}=u_{\one_F+\one_G,C}.
\]
\end{theorem}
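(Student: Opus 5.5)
The plan is to prove the two equalities separately, each by a duality argument of the same shape. Throughout, $\gamma=\one_F+\one_G$, and by \eqref{modular-degree} the target group $\MH^{\kappa_M(\gamma),\gamma}(\G;\Ftwo)_{A\to C}$ is one-dimensional over $\Ftwo$. This uses \Cref{Koizumi-basis}: $(\gamma,C)$ is tope-admissible because its level flats are $F\cup G\supseteq F\cap G$, and $C$ is incident to both by hypothesis. Its unique nonzero element is $u_{\gamma,C}$. It therefore suffices to show that each of the two products is nonzero, since both lie in this group by the bidegree count.

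The second equality, $x_{F\cup G,D}\smile x_{F\cap G,C}=u_{\gamma,C}$ with $D=\eps_{F\cap G}C$, is just the standard factorization \Cref{standard-factorization} for $\alpha=\gamma$. Indeed, $U_1(\gamma)=F\cup G$ and $U_2(\gamma)=F\cap G$, with $C_2=C$ and $C_1=\eps_{F\cap G}C=D$. So this part is immediate from \Cref{periodicity-product}.

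For the first equality I will use the cycle $\Omega_{F,G;C}$ and the splitting map. By \Cref{cup-dual-split} with $\alpha=\one_F$, $\beta=\one_G$, the cup product on cochains is the dual $q^\vee_{\one_F,\one_G,C}$. Pick cocycle representatives $\xi_F$ of $x_{F,B}$ and $\xi_G$ of $x_{G,C}$. Since the cohomology groups are canonically dual to homology over $\Ftwo$ and \Cref{antipodal-basis} identifies $\overline\omega_F,\overline\omega_G$ as the nonzero homology classes, we have $\xi_F(\overline\omega_F)=1$ and $\xi_G(\overline\omega_G)=1$. Then
\[
(\xi_F\smile\xi_G)(\Omega_{F,G;C})=(\xi_F\otimes\xi_G)\bigl(q_{\one_F,\one_G,C}(\Omega_{F,G;C})\bigr)=\xi_F(\overline\omega_F)\,\xi_G(\overline\omega_G)=1.
\]
Hence the class $x_{F,B}\smile x_{G,C}$ pairs nontrivially with the cycle $\Omega_{F,G;C}$, so it is nonzero and equals $u_{\gamma,C}$. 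This is the content of \eqref{split-nonzero}, and requires that the pairing between $\MH^*$ and $\MH_*$ is well defined on classes, which holds because $\Omega_{F,G;C}$ is a cycle.

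The main obstacle is the identity $q(\Omega_{F,G;C})=\overline\omega_F\otimes\overline\omega_G$, i.e.\ checking that the special prefixes and suffixes are exactly the terms of the mod $2$ cycles for the complete flags of $F$ (blocks $S_i,T_j$) and of $G$ (blocks $S_i,V_k$) from $A$ to $B$ and from $B$ to $C$. This also requires that these flags are complete flags incident to $A$ and $B$ respectively, which is where \Cref{modular-direct-sum} and \Cref{incidence-refinement} are used. Once that bookkeeping holds, everything else is formal from the one-dimensionality of the target group.
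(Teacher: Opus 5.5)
Your proposal is correct and follows the paper's own proof. Both get one-dimensionality of the target from \Cref{Koizumi-basis} and \eqref{modular-degree}, the second equality from the standard factorization for the level flats $F\cup G\supseteq F\cap G$, and nonvanishing of $x_{F,\eps_G C}\smile x_{G,C}$ from \Cref{cup-dual-split} together with $q_{\one_F,\one_G,C}(\Omega_{F,G;C})=\overline{\omega}_F\otimes\overline{\omega}_G$. Your direct evaluation $(\xi_F\smile\xi_G)(\Omega_{F,G;C})=1$ is just the dual phrasing of the paper's remark that $H(q_{\one_F,\one_G,C})$ is an isomorphism between one-dimensional spaces.
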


\begin{proof}
By \Cref{Koizumi-basis,modular-degree}, the source of $H(q_{\one_F,\one_G,C})$ and its target are both one-dimensional and concentrated in degree $d$. Equation \eqref{split-nonzero} therefore shows that $H(q_{\one_F,\one_G,C})$ is an isomorphism. By \Cref{cup-dual-split}, the product $x_{F,\eps_G C}\smile x_{G,C}$ is nonzero, hence equals the unique nonzero target class $u_{\gamma,C}$. The equality
\[
x_{F\cup G,\eps_{F\cap G}C}\smile x_{F\cap G,C}=u_{\gamma,C}
\]
is the standard factorization \Cref{standard-factorization} for the level-flat chain $F\cup G\supseteq F\cap G$.
\end{proof}

\section{The product formula}
\label{main-result}

Now we may deal with the general case.

\subsection{Sorting flat words}

A \emph{flat word} is a finite sequence $\bF=(F_1,\ldots,F_m)$ of nonempty flats. Its degree and multiplicity vector are
\[
d(\bF)=\sum_{i=1}^m\rk_MF_i\in\NN, \quad w(\bF)=\sum_{i=1}^m\one_{F_i}\in\NN^E.
\]
For a pair $(\bF,C)$ of a flat word $\bF$ and a tope $C\in\T(\M)$, define $C_m=C$ and recursively
\[
C_{j-1}=\eps_{F_j}C_j\in\{+,-\}^E, \quad j=m,m-1,\ldots,1.
\]
Note that $C_j$ are not necessarily topes. Whenever every single-flat class (\Cref{single-flat}) $x_{F_j,C_j}\in\MH^{\rk F_j,\one_{F_j}}(\G(\M);\Ftwo)_{C_{j-1}\to C_j}$ is defined, put
\begin{equation}\label{flat-word-monomial}
x_{\bF,C}=x_{F_1,C_1}\smile\cdots \smile x_{F_m,C_m}\in\MH^{d(\bF),w(\bF)}(\G(\M);\Ftwo)_{C_0\to C_m}.
\end{equation}

\begin{definition}
The pair $(\bF,C)$ is \emph{clean} if $(w(\bF),C)$ is tope-admissible and $d(\bF)=\kappa_M(w(\bF))$.
\end{definition}

By \Cref{u-class}, when $(\bF,C)$ is clean, the class $u_{w(\bF),C}\in\MH^{d(\bF),w(\bF)}(\G(\M);\Ftwo)_{\eps_{w(\bF)}C\to C}$ is defined. Moreover, we have the following.

\begin{proposition}\label{modular-straightening}
If $(\bF,C)$ is clean, then every factor in \eqref{flat-word-monomial} is defined and
\[
x_{\bF,C}=u_{w(\bF),C}.
\]
More precisely, every elementary sorting move \eqref{sorting-move} on $\bF$ is realized by the local modular relation.
\end{proposition}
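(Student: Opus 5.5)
We argue by induction on the sorting potential $\nu(\bF)=\sum_j j\,\rk$-free quantity $\sum_j j|F_j|$ of \Cref{set-sorting}. Put $w=w(\bF)$. Cleanliness gives $\kappa_M(w)=d(\bF)=\sum_i\rk F_i$. Tope-admissibility makes $w$ an $M$-admissible weight, so $M_w$ is loopless by \Cref{admissible-initial-loopless}. Hence the hypotheses of \Cref{tight-flat-lattice} hold, and we let $\D$ be the lattice generated by $F_1,\ldots,F_m$. By \Cref{tight-tope-incidence}, every $S\in\D$ and every $A\in\T(\M_w)$ satisfy $\eps_SA\in\T(\M_w)$ and $A\setminus S\in\Lcov(\M)$. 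Since $C\in\T(\M_w)$ by \eqref{eq:admissible-initial-topes}, induction downward from $C_m=C$ shows that each $C_j=\eps_{F_{j+1}\cup\cdots}$-reflection of $C$ is a tope of $\M_w$. Thus every $C_j\setminus F_j$ is a covector, and all factors $x_{F_j,C_j}$ are defined. This proves the first claim.

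For the main identity, if $\bF$ is nested then it is the level-flat sequence of $w$ by \Cref{set-sorting}, and \Cref{standard-factorization} gives $x_{\bF,C}=u_{w,C}$. Otherwise we pick an adjacent pair with $F_i\not\supseteq F_{i+1}$ and let $\bF'$ be the word obtained by the move \eqref{sorting-move}. We first check that $\bF'$ is a flat word with the same data. By \Cref{tight-flat-lattice}(ii), both $F_i\cup F_{i+1}$ and $F_i\cap F_{i+1}$ lie in $\D$ and are flats. By part (iv), $(F_i,F_{i+1})$ is a closed modular pair, so $d(\bF')=d(\bF)$. The multiplicity vector $w(\bF')=w$ is unchanged, so $(\bF',C)$ is again clean. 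If $F_i\cap F_{i+1}=\varnothing$, we drop that entry and use $x_{\varnothing,\cdot}=e_\cdot$. The lattice generated by $\bF'$ is contained in $\D$, so the induction applies to $\bF'$, and $\nu$ strictly decreases.

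It remains to show $x_{\bF,C}=x_{\bF',C}$. Only the two factors $x_{F_i,C_i}\smile x_{F_{i+1},C_{i+1}}$ change. The reflections $C_j$ for $j\geq i+1$ and $j\leq i-1$ coincide for $\bF$ and $\bF'$, because $\eps_{F}\eps_{G}=\eps_{F\cup G}\eps_{F\cap G}$ as sign reversals. So we apply \Cref{local-modular-thm} with $F=F_i$, $G=F_{i+1}$ and terminal tope $C_{i+1}$. Its hypotheses are that $C_{i+1}\setminus(F\cap G)$ and $C_{i+1}\setminus(F\cup G)$ are covectors. Both follow from \Cref{tight-tope-incidence}, since $C_{i+1}\in\T(\M_w)$ and $F\cap G,F\cup G\in\D$. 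The theorem then gives
\[
x_{F_i,\eps_{F_{i+1}}C_{i+1}}\smile x_{F_{i+1},C_{i+1}}=x_{F_i\cup F_{i+1},\eps_{F_i\cap F_{i+1}}C_{i+1}}\smile x_{F_i\cap F_{i+1},C_{i+1}}.
\]
By associativity, we may substitute this identity into the monomial, which yields $x_{\bF,C}=x_{\bF',C}=u_{w,C}$.

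The main obstacle is the bookkeeping that the hypotheses of \Cref{local-modular-thm} hold at the intermediate tope $C_{i+1}$ rather than at $C$. This is exactly why we need the closure of $\T(\M_w)$ under $\eps_S$ for every $S\in\D$, and not merely for the level flats of $w$. We also need the sets produced by earlier sorting moves to stay in the original $\D$. This holds because $\D$ is closed under union and intersection.
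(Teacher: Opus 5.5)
Your proof is correct and follows the paper's argument: it uses the same combination of \Cref{tight-flat-lattice}, \Cref{tight-tope-incidence}, \Cref{local-modular-thm}, and \Cref{standard-factorization}, with each sorting move realized by the local modular relation at the intermediate terminal tope. The differences are only organizational. You run an explicit induction on $\nu$ that re-derives $\D$ from the current word, so your remark that the new lattice sits inside the old $\D$ is not actually needed. You also obtain the intermediate topes by iterating the $\eps_S$-closure of $\T(\M_w)$, whereas the paper fixes the original $\D$ and uses that separators of $M_w$ are closed under symmetric difference.
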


\begin{proof}
Put $w=w(\bF)$, let $\D$ be the family generated by $F_1,\ldots,F_m,\varnothing,E$ under unions and intersections (\Cref{tight-flat-lattice}), and set $\N=\M_w$. Since $(\bF,C)$ is clean, $w$ is $M$-admissible and $\kappa_M(w)=\sum_{j=1}^m\rk F_j$. By \Cref{admissible-initial-loopless}, the initial matroid $M_w$ is loopless and $C\in\T(\N)$. Then \Cref{tight-flat-lattice} guarantees that every member of $\D$ is a flat of $M$ and a separator of $M_w$, and every pair of members of $\D$ is closed modular.

We first check that all single-flat classes occurring during sorting are defined. Let
\[
\bH=(H_1,\ldots,H_m)
\]
be any word obtained from $\bF$ by sorting moves \eqref{sorting-move}, keeping an empty set temporarily when it occurs. Every $H_j$ belongs to $\D$. With terminal tope $D_m=C$, define $D_{j-1}=\eps_{H_j}D_j$. Equivalently,
\[
D_j=\eps_{H_{j+1}\mathbin\triangle\cdots\mathbin\triangle H_m}C,
\]
where $\triangle$ denotes symmetric difference. Separators of $M_w$ are unions of connected components and therefore form a Boolean algebra under union, intersection, complement, and symmetric difference. It follows that each $H_{j+1}\mathbin\triangle\cdots\mathbin\triangle H_m$ is again a separator of $M_w$. Since $C\in\T(\N)$, reversing such a separator gives another tope of $\N$, so $D_j\in\T(\N)$ for every $j$. Applying \Cref{tight-tope-incidence} to $H_j\in\D$ gives
\[
D_j\setminus H_j\in\Lcov(\M).
\]
Thus every class $x_{H_j,D_j}$ appearing at every stage of the sorting process is defined.

Now suppose an adjacent sorting-eligible pair in the current word is $(F,G)$, and let $D$ be the terminal tope of this two-factor subword. By \Cref{tight-flat-lattice}, $(F,G)$ is a closed modular pair and $F\cap G,F\cup G\in\D$. By \Cref{tight-tope-incidence}, $D\setminus (F\cap G),D\setminus(F\cup G)\in\Lcov(\M)$. Therefore \Cref{local-modular-thm} applies and gives
\begin{equation}\label{one-straightening-step}
x_{F,\eps_GD}\smile x_{G,D}=x_{F\cup G,\eps_{F\cap G}D}\smile x_{F\cap G,D}.
\end{equation}
This is exactly the algebraic realization of the sorting move $(F,G)\mapsto(F\cup G,F\cap G)$. It preserves the multiplicity vector $\one_F+\one_G=\one_{F\cup G}+\one_{F\cap G}$, and it preserves the total degree by the modular rank equality \eqref{modular-def}.

Repeatedly applying \eqref{one-straightening-step} and then \Cref{set-sorting}, the word terminates at
\[
U_1(w)\supseteq U_2(w)\supseteq\cdots\supseteq U_{m(w)}(w),
\]
with empty sets omitted. The product has not changed during this process. The terminal nested monomial is precisely the standard factorization of $u_{w,C}$ from \Cref{standard-factorization}. This proves $x_{\bF,C}=u_{w(\bF),C}$.
\end{proof}

\subsection{The product formula}

We can now determine every structure constant.

\begin{theorem}\label{product-formula}
Let $(\alpha,B)$ and $(\beta,C)$ be tope-admissible in $\M$. Then
\begin{equation}\label{product-eq}
u_{\alpha,B}\smile u_{\beta,C}=\begin{cases}
 u_{\alpha+\beta,C}, &\begin{array}{l}
       B=\eps_\beta C,\\
       (\alpha+\beta,C)\text{ is tope-admissible},\\
       \kappa_M(\alpha+\beta)=\kappa_M(\alpha)+\kappa_M(\beta),
 \end{array}\\[6mm]
 0,&\text{otherwise}.
 \end{cases}
\end{equation}
\end{theorem}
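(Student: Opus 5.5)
We will split into cases according to which of the three conditions fails, and in the remaining case reduce everything to \Cref{modular-straightening}.

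\emph{Endpoint condition.} By \Cref{u-class}, $u_{\alpha,B}$ is supported on chains ending at $B$. By the idempotent relations, $u_{\beta,C}=e_{\eps_\beta C}\smile u_{\beta,C}$. Hence
\[
u_{\alpha,B}\smile u_{\beta,C}=u_{\alpha,B}\smile e_B\smile e_{\eps_\beta C}\smile u_{\beta,C},
\]
which vanishes unless $B=\eps_\beta C$, since the $e_B$ are orthogonal idempotents.

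\emph{Degree and admissibility.} Assume $B=\eps_\beta C$. The product lies in
\[
\MH^{\kappa_M(\alpha)+\kappa_M(\beta),\alpha+\beta}(\G;\Ftwo)_{\eps_{\alpha+\beta}C\to C}.
\]
Since integral homology is torsion-free (\Cref{Koizumi-basis}), the mod $2$ cohomology is dual to the mod $2$ homology. So this group is nonzero only if $(\alpha+\beta,C)$ is tope-admissible and $\kappa_M(\alpha)+\kappa_M(\beta)=\kappa_M(\alpha+\beta)$. If either condition fails, the product is $0$. Note that subadditivity (\Cref{kappa-support}) only gives $\le$, so the equality is a genuine extra condition.

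\emph{The remaining case.} Assume all three conditions hold, and take the standard factorizations from \Cref{standard-factorization}:
\[
u_{\alpha,B}=x_{F_1,B_1}\smile\cdots\smile x_{F_m,B_m},\qquad u_{\beta,C}=x_{G_1,C_1}\smile\cdots\smile x_{G_n,C_n},
\]
where $B_m=B$, $C_n=C$, $F_j=U_j(\alpha)$ and $G_j=U_j(\beta)$.

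Form the flat word
\[
\bF=(F_1,\ldots,F_m,G_1,\ldots,G_n).
\]
Its terminal tope is $C$, and the recursion reversing $G_n,\ldots,G_1$ from $C$ reaches $\eps_\beta C=B$. Continuing with $F_m,\ldots,F_1$ reproduces exactly the topes $B_j$. Hence $x_{\bF,C}$ is literally the concatenated monomial, that is,
\[
x_{\bF,C}=u_{\alpha,B}\smile u_{\beta,C},
\]
with every factor already defined.

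Next we check that $(\bF,C)$ is clean. The multiplicity vector is $w(\bF)=\alpha+\beta$, and $(\alpha+\beta,C)$ is tope-admissible by assumption. For the degree,
\[
d(\bF)=\sum_j\rk F_j+\sum_j\rk G_j=\kappa_M(\alpha)+\kappa_M(\beta)=\kappa_M(\alpha+\beta).
\]
\Cref{modular-straightening} then gives $x_{\bF,C}=u_{\alpha+\beta,C}$, which is the claimed formula.

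\emph{Main obstacle.} The only delicate point is this cleanliness check. In particular, the hypotheses of \Cref{tight-flat-lattice} must hold for the concatenated word, not just for $\alpha$ and $\beta$ separately. These hypotheses are that $M_{\alpha+\beta}$ is loopless and that $C\in\T(\M_{\alpha+\beta})$. Both follow from the tope-admissibility of $(\alpha+\beta,C)$ via \Cref{admissible-initial-loopless} and \eqref{eq:admissible-initial-topes}. Once this is in place, the sorting argument already proven handles everything else, and the rest is bookkeeping.
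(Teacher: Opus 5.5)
Your proof is correct and follows the paper's argument. You dispose of the failure cases by endpoint composability and \Cref{Koizumi-basis}, then concatenate the two level-set words, check that $(\bF,C)$ is clean, and apply \Cref{modular-straightening}; the paper gets the two factorizations by applying \Cref{modular-straightening} to the separate nested words, which amounts to your use of \Cref{standard-factorization}. One small inaccuracy in your closing remark, which does not affect the argument since cleanliness supplies everything: the hypotheses of \Cref{tight-flat-lattice} are looplessness of $M_w$ and the identity $\kappa_M(w)=\sum_i\rk F_i$, whereas $C\in\T(\M_w)$ is used inside \Cref{modular-straightening} itself.
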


\begin{proof}
If $B\neq\eps_\beta C$, the endpoint summands do not compose, so the cup product is zero. Assume $B=\eps_\beta C$. The product has crossing vector $\alpha+\beta$ and cohomological degree $\kappa_M(\alpha)+\kappa_M(\beta)$. If $(\alpha+\beta,C)$ is not tope-admissible, then every cohomology group in the $(\alpha+\beta)$-crossing endpoint summand ending at $C$ is zero by \Cref{Koizumi-basis}. By \Cref{kappa-support}, if $\kappa_M(\alpha+\beta)<\kappa_M(\alpha)+\kappa_M(\beta)$, then \Cref{Koizumi-basis} shows that the target crossing summand vanishes in the cohomological degree of the product. These are the otherwise cases.

Next suppose the three displayed conditions are satisfied. Put $p=\max_{e\in E}\alpha_e$ and $q=\max_{e\in E}\beta_e$ and form flat words
\[
\bF=(F_1,\ldots,F_p),\quad F_i=U_i(\alpha), \quad \text{and} \quad \bG=(G_1,\ldots,G_q),\quad G_j=U_j(\beta).
\]
Also name sign vectors $A_p=B, A_{i-1}=\eps_{F_i}A_i$ and $D_q=C, D_{j-1}=\eps_{G_j}D_j$. By \Cref{layercake}, we have $\alpha=w(\bF)$ and $\beta=w(\bG)$. Moreover $(\bF,B)$ and $(\bG,C)$ are clean since $(\alpha,B),(\beta,C)$ are supposed to be tope-admissible and 
\begin{equation}\label{degree-kappa-eq}
d(\bF)=\sum_{i=1}^p\rk U_i(\alpha)=\kappa_M(\alpha),\quad d(\bG)=\sum_{j=1}^q\rk U_j(\beta)=\kappa_M(\beta),  
\end{equation}
Therefore
\begin{equation}\label{two-factors}
u_{\alpha,B}=x_{\bF,B}=x_{F_1,A_1}\smile\cdots\smile x_{F_p,A_p}, \quad u_{\beta,C}=x_{\bG,C}=x_{G_1,D_1}\smile\cdots\smile x_{G_q,D_q},
\end{equation}
are defined by \Cref{modular-straightening}. Since $D_0=\eps_\beta C=B=A_p$, these two monomials are composable.

Let $\bH=\bF\star\bG=(F_1,\ldots,F_p,G_1,\ldots,G_q)$ be the concatenation word. We claim $(\bH,C)$ is clean. In fact, $w(\bH)=\sum_{i=1}^p\one_{F_i}+\sum_{j=1}^q\one_{G_j}=\alpha+\beta$ and we already assumed $(\alpha+\beta,C)$ is tope-admissible; on the other hand
\[
d(\bH)=\sum_{i=1}^p\rk F_i+\sum_{j=1}^q\rk G_j=\kappa_M(\alpha)+\kappa_M(\beta)=\kappa_M(\alpha+\beta),
\]
where the second equality is \eqref{degree-kappa-eq} and the third is our assumption. Then \Cref{modular-straightening} again shows that
\[
x_{\bH,C}=x_{F_1,A_1}\smile\cdots\smile x_{F_p,A_p}\smile x_{G_1,D_1}\smile\cdots\smile x_{G_q,D_q}
\]
is defined and $u_{\alpha+\beta,C}=x_{\bH,C}$. Then combining with \Cref{two-factors}, we conclude
\[
u_{\alpha+\beta,C}=u_{\alpha,B}\smile u_{\beta,C}.
\]
\end{proof}

We leave the following problem as a conjecture.
\begin{conjecture}
The same product formula in \Cref{product-formula} holds over $\ZZ$.
\end{conjecture}

\subsection{Example: Rank-two arrangements}
\label{subsec:pencil-ring}

Let $\A$ be a central arrangement of $n\geq 3$ lines in $\R^2$. The underlying matroid is $U_{2,n}$ and the tope graph is the cycle $\G\cong C_{2n}$. The flats of $U_{2,n}$ are $\varnothing,\{i\}\ (i\in E),E$. For a chamber $C\in\T$, put
\[
\mathrm{bd}(C)=\{i\in E \mid C\setminus\{i\}\in\Lcov(\M)\}.
\]
These are the two lines bounding the chamber $C$.

\begin{proposition}
A vector $\alpha\in\NN^E$ is $U_{2,n}$-admissible if and only if
\[
\alpha=a\one_E+b\one_{\{i\}}
\]
for some $a,b\in\NN$ and $i\in E$. Moreover, $\kappa_M(\alpha)=2a+b$.

For $U_{2,n}$-admissible vector $\alpha$ as above, the pair $(\alpha,C)$ is tope-admissible if and only if $b=0$ or $i\in\mathrm{bd}(C)$.
\end{proposition}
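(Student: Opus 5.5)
The plan is to argue directly from the definitions, using only the explicit flat lattice of $U_{2,n}$, namely $\varnothing$, the singletons $\{i\}$, and $E$.

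\emph{Admissibility.} Let $\alpha\in\NN^E$ with $m=m(\alpha)$. Every nonempty level set $U_j(\alpha)$ must be a flat, so it is either $E$ or a singleton. Since the level sets are weakly decreasing, there are integers $a$ and $b$ such that
\begin{itemize}
\item $U_j(\alpha)=E$ for $j\le a$;
\item $U_j(\alpha)$ is a singleton for $a<j\le a+b=m$.
\end{itemize}
All the singletons must coincide, say with $\{i\}$, because a nonempty subset of a singleton is that singleton. The layer-cake formula \eqref{layercake} then gives $\alpha=a\one_E+b\one_{\{i\}}$.

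Conversely, take a vector of this form (with $i$ arbitrary if $b=0$). Its level sets are $E$ for $j\le a$ and $\{i\}$ for $a<j\le a+b$, all of which are flats. Plugging these into \eqref{kappa-def} with $\rk E=2$ and $\rk\{i\}=1$ gives $\kappa_M(\alpha)=2a+b$.

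\emph{Tope-admissibility.} By definition, $(\alpha,C)$ is tope-admissible if and only if $C\setminus U_j(\alpha)\in\Lcov(\M)$ for every $j$. There are two kinds of level set to check.
\begin{itemize}
\item For $U_j=E$, the sign vector $C\setminus E=\zero$ is always a covector of a central arrangement, so this imposes no condition.
\item For $U_j=\{i\}$, which occurs exactly when $b>0$, the condition is $C\setminus\{i\}\in\Lcov(\M)$, which is the definition of $i\in\mathrm{bd}(C)$.
\end{itemize}
Hence $(\alpha,C)$ is tope-admissible if and only if $b=0$ or $i\in\mathrm{bd}(C)$.

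\emph{Expected obstacle.} The only delicate point is the bookkeeping in the first part: the decomposition is not unique when $b=0$, and when $a=0$ the level set $E$ never occurs. I would handle this by reading the statement as asserting existence of such $a,b,i$, and by treating the degenerate cases $\alpha=\zero$ and $b=0$ explicitly. The remark that $\mathrm{bd}(C)$ consists of exactly two lines is geometric and not needed for the equivalence, so I would not prove it.
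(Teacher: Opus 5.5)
Your proof is correct and follows essentially the same route as the paper. Both arguments use the flat lattice $\varnothing$, $\{i\}$, $E$ of $U_{2,n}$ to force the nested level sets to be $E$ and then a single repeated singleton. Both compute $\kappa_M(\alpha)=2a+b$ from $\rk E=2$ and $\rk\{i\}=1$, and both observe that incidence with $E$ is automatic while incidence with $\{i\}$ is the definition of $i\in\mathrm{bd}(C)$. The paper sets $a=\min_{e}\alpha_e$; you count the level sets equal to $E$, which gives the same integer.
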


\begin{proof}
Put $a=\min_{e\in E}\alpha_e$. Then $U_j(\alpha)=E$ for $1\leq j\leq a$. Every subsequent nonempty level set is a proper flat of $U_{2,n}$, hence a singleton. Since the level sets are nested, all these singletons are equal to one fixed $\{i\}$. This gives the asserted form of $\alpha$, and the converse is immediate. The formula for $\kappa_M$ follows because $E$ has rank $2$ and $\{i\}$ has rank $1$. Finally, incidence with $E$ is automatic, while incidence with $\{i\}$ is equivalent to $i\in\mathrm{bd}(C)$.
\end{proof}

Consequently, the magnitude cohomology ring has the canonical basis
\[
u_{a\one_E+b\one_{\{i\}},C}\quad \bigl( a\geq 0,\ b\geq 0,\ i\in\mathrm{bd}(C) \bigr).
\]
The class has cohomological degree $2a+b$ and ordinary length degrees $an+b$. Now put
\[
\alpha=a\one_E+b\one_{\{i\}}, \quad \beta=c\one_E+d\one_{\{j\}},
\]
and suppose that $u_{\alpha,B}$ and $u_{\beta,C}$ exist. \Cref{product-formula} gives
\[
u_{\alpha,B}u_{\beta,C}=\begin{cases}
 u_{\alpha+\beta,C},&\begin{array}{l}
 B=\eps_\beta C,\\
 b=0\text{ or }d=0\text{ or }i=j,
 \end{array}\\[4mm]
 0,&\text{otherwise}.
 \end{cases}
\]
Indeed, if $b,d>0$ and $i\neq j$, then one of the level sets of
$\alpha+\beta$ is $\{i,j\}$, which is not a flat of $U_{2,n}$.

\section*{Acknowledgments}
The author thanks Junnosuke Koizumi for valuable comments.

\printbibliography

@book {Anderson2025,
    AUTHOR = {Anderson, Laura},
     TITLE = {Oriented matroids},
    SERIES = {Cambridge Studies in Advanced Mathematics},
    VOLUME = {216},
 PUBLISHER = {Cambridge University Press, Cambridge},
      YEAR = {2025},
     PAGES = {xii+321},
      ISBN = {9-781-009-49411-3; [9781009494076]},
   MRCLASS = {52C40 (05B35)},
  MRNUMBER = {4880415},
MRREVIEWER = {Donggyu\ Kim},
}

@article {Ardila2006,
    AUTHOR = {Ardila, Federico and Klivans, Caroline J.},
     TITLE = {The {B}ergman complex of a matroid and phylogenetic trees},
   JOURNAL = {J. Combin. Theory Ser. B},
  FJOURNAL = {Journal of Combinatorial Theory. Series B},
    VOLUME = {96},
      YEAR = {2006},
    NUMBER = {1},
     PAGES = {38--49},
      ISSN = {0095-8956,1096-0902},
   MRCLASS = {05B35},
  MRNUMBER = {2185977},
MRREVIEWER = {Neil\ L.\ White},
       DOI = {10.1016/j.jctb.2005.06.004},
       URL = {https://doi.org/10.1016/j.jctb.2005.06.004},
}

@article {Barmak2011,
    AUTHOR = {Barmak, Jonathan Ariel},
     TITLE = {On {Q}uillen's {T}heorem {A} for posets},
   JOURNAL = {J. Combin. Theory Ser. A},
  FJOURNAL = {Journal of Combinatorial Theory. Series A},
    VOLUME = {118},
      YEAR = {2011},
    NUMBER = {8},
     PAGES = {2445--2453},
      ISSN = {0097-3165,1096-0899},
   MRCLASS = {55P10 (05C10 05E45 06A07)},
  MRNUMBER = {2834186},
MRREVIEWER = {Petar\ Pave\v si\'c},
       DOI = {10.1016/j.jcta.2011.06.008},
       URL = {https://doi.org/10.1016/j.jcta.2011.06.008},
}

@book {Bjorner1999,
    AUTHOR = {Bj\"orner, Anders and Las Vergnas, Michel and Sturmfels, Bernd
              and White, Neil and Ziegler, G\"unter M.},
     TITLE = {Oriented matroids},
    SERIES = {Encyclopedia of Mathematics and its Applications},
    VOLUME = {46},
   EDITION = {Second},
 PUBLISHER = {Cambridge University Press, Cambridge},
      YEAR = {1999},
     PAGES = {xii+548},
      ISBN = {0-521-77750-X},
   MRCLASS = {52B40 (05B35 52C35)},
  MRNUMBER = {1744046},
       DOI = {10.1017/CBO9780511586507},
       URL = {https://doi.org/10.1017/CBO9780511586507},
}

@article {Edelman1985,
    AUTHOR = {Edelman, Paul H. and Walker, James W.},
     TITLE = {The homotopy type of hyperplane posets},
   JOURNAL = {Proc. Amer. Math. Soc.},
  FJOURNAL = {Proceedings of the American Mathematical Society},
    VOLUME = {94},
      YEAR = {1985},
    NUMBER = {2},
     PAGES = {221--225},
      ISSN = {0002-9939,1088-6826},
   MRCLASS = {52A25 (06A10 51M20 55P10 57Q99)},
  MRNUMBER = {784167},
MRREVIEWER = {Udo\ Pachner},
       DOI = {10.2307/2045379},
       URL = {https://doi.org/10.2307/2045379},
}

@article {Hepworth2017,
    AUTHOR = {Hepworth, Richard and Willerton, Simon},
     TITLE = {Categorifying the magnitude of a graph},
   JOURNAL = {Homology Homotopy Appl.},
  FJOURNAL = {Homology, Homotopy and Applications},
    VOLUME = {19},
      YEAR = {2017},
    NUMBER = {2},
     PAGES = {31--60},
      ISSN = {1532-0073,1532-0081},
   MRCLASS = {55N35 (05C10)},
  MRNUMBER = {3683605},
MRREVIEWER = {Haimiao\ Chen},
       DOI = {10.4310/HHA.2017.v19.n2.a3},
       URL = {https://doi.org/10.4310/HHA.2017.v19.n2.a3},
}

@article {Hepworth2022,
    AUTHOR = {Hepworth, Richard},
     TITLE = {Magnitude cohomology},
   JOURNAL = {Math. Z.},
  FJOURNAL = {Mathematische Zeitschrift},
    VOLUME = {301},
      YEAR = {2022},
    NUMBER = {4},
     PAGES = {3617--3640},
      ISSN = {0025-5874,1432-1823},
   MRCLASS = {55N35 (18D20 18F99 51F99)},
  MRNUMBER = {4449723},
MRREVIEWER = {David\ Matthew\ Freeman},
       DOI = {10.1007/s00209-022-03013-8},
       URL = {https://doi.org/10.1007/s00209-022-03013-8},
}

@misc{Koizumi2026,
  author      = {Koizumi, Junnosuke and Liu, Ye},
  title       = {Magnitude homology of real hyperplane arrangements},
  year        = {2026},
  eprint      = {2604.03718},
  eprinttype  = {arxiv},
  eprintclass = {math.CO},
}

@misc{Koizumi2026a,
  author      = {Koizumi, Junnosuke},
  title       = {Magnitude homology of tope graphs},
  year        = {2026},
  eprint      = {2607.11863},
  eprinttype  = {arxiv},
  eprintclass = {math.CO},
}

@article {Leinster2013,
    AUTHOR = {Leinster, Tom},
     TITLE = {The magnitude of metric spaces},
   JOURNAL = {Doc. Math.},
  FJOURNAL = {Documenta Mathematica},
    VOLUME = {18},
      YEAR = {2013},
     PAGES = {857--905},
      ISSN = {1431-0635,1431-0643},
   MRCLASS = {51F99 (18D20 28A75 49Q20 52A38 53C65)},
  MRNUMBER = {3084566},
MRREVIEWER = {J.\ B\"ohm},
}

@article {Leinster2019,
    AUTHOR = {Leinster, Tom},
     TITLE = {The magnitude of a graph},
   JOURNAL = {Math. Proc. Cambridge Philos. Soc.},
  FJOURNAL = {Mathematical Proceedings of the Cambridge Philosophical
              Society},
    VOLUME = {166},
      YEAR = {2019},
    NUMBER = {2},
     PAGES = {247--264},
      ISSN = {0305-0041,1469-8064},
   MRCLASS = {05C31 (05C50 18D20 57M15)},
  MRNUMBER = {3903118},
MRREVIEWER = {Lorenzo\ Traldi},
       DOI = {10.1017/S0305004117000810},
       URL = {https://doi.org/10.1017/S0305004117000810},
}

@book {Orlik1992,
    AUTHOR = {Orlik, Peter and Terao, Hiroaki},
     TITLE = {Arrangements of hyperplanes},
    SERIES = {Grundlehren der mathematischen Wissenschaften [Fundamental
              Principles of Mathematical Sciences]},
    VOLUME = {300},
 PUBLISHER = {Springer-Verlag, Berlin},
      YEAR = {1992},
     PAGES = {xviii+325},
      ISBN = {3-540-55259-6},
   MRCLASS = {52B30 (14F35 20F36 20F55 32S25 57N65)},
  MRNUMBER = {1217488},
MRREVIEWER = {Michel\ Yves\ Jambu},
       DOI = {10.1007/978-3-662-02772-1},
       URL = {https://doi.org/10.1007/978-3-662-02772-1},
}

@book {Oxley2011,
    AUTHOR = {Oxley, James},
     TITLE = {Matroid theory},
    SERIES = {Oxford Graduate Texts in Mathematics},
    VOLUME = {21},
   EDITION = {Second},
 PUBLISHER = {Oxford University Press, Oxford},
      YEAR = {2011},
     PAGES = {xiv+684},
      ISBN = {978-0-19-960339-8},
   MRCLASS = {05-01 (05B35 90C27)},
  MRNUMBER = {2849819},
MRREVIEWER = {Maruti\ M.\ Shikare},
       DOI = {10.1093/acprof:oso/9780198566946.001.0001},
       URL = {https://doi.org/10.1093/acprof:oso/9780198566946.001.0001},
}

@article {Rau2022,
    AUTHOR = {Rau, Johannes and Renaudineau, Arthur and Shaw, Kris},
     TITLE = {Real phase structures on matroid fans and matroid
              orientations},
   JOURNAL = {J. Lond. Math. Soc. (2)},
  FJOURNAL = {Journal of the London Mathematical Society. Second Series},
    VOLUME = {106},
      YEAR = {2022},
    NUMBER = {4},
     PAGES = {3687--3710},
      ISSN = {0024-6107,1469-7750},
   MRCLASS = {52C40 (05B35 14T15)},
  MRNUMBER = {4524208},
MRREVIEWER = {Galen\ Dorpalen-Barry},
       DOI = {10.1112/jlms.12671},
       URL = {https://doi.org/10.1112/jlms.12671},
}

@book {Schrijver2003,
    AUTHOR = {Schrijver, Alexander},
     TITLE = {Combinatorial optimization. {P}olyhedra and efficiency. {V}ol.
              {B}},
    SERIES = {Algorithms and Combinatorics},
    VOLUME = {24,B},
      NOTE = {Matroids, trees, stable sets,
              Chapters 39--69},
 PUBLISHER = {Springer-Verlag, Berlin},
      YEAR = {2003},
     PAGES = {i--xxxiv and 649--1217},
      ISBN = {3-540-44389-4},
   MRCLASS = {90-02 (05-02 52B55 68Q25 68R10 90C27 90C35 90C57)},
  MRNUMBER = {1956925},
MRREVIEWER = {Alexander\ I.\ Barvinok},
}

@misc{Shaw2026,
  author      = {Shaw, Kris and Yuen, Chi Ho},
  title       = {Filtrations of tope spaces of oriented matroids},
  year        = {2026},
  eprint      = {2501.11295},
  eprinttype  = {arxiv},
  eprintclass = {math.CO},
  note        = {Version 2},
}

@incollection {Stanley2007,
    AUTHOR = {Stanley, Richard P.},
     TITLE = {An introduction to hyperplane arrangements},
 BOOKTITLE = {Geometric combinatorics},
    SERIES = {IAS/Park City Math. Ser.},
    VOLUME = {13},
     PAGES = {389--496},
 PUBLISHER = {Amer. Math. Soc., Providence, RI},
      YEAR = {2007},
      ISBN = {978-0-8218-3736-8; 0-8218-3736-2},
   MRCLASS = {52C35 (05B35 55R80)},
  MRNUMBER = {2383131},
       DOI = {10.1090/pcms/013/08},
       URL = {https://doi.org/10.1090/pcms/013/08},
}

@article {Yagi2026,
    AUTHOR = {Yagi, Yukino and Yoshinaga, Masahiko},
     TITLE = {Reconstruction of oriented matroids from {V}archenko-{G}elfand
              algebras},
   JOURNAL = {Int. Math. Res. Not. IMRN},
  FJOURNAL = {International Mathematics Research Notices. IMRN},
      YEAR = {2026},
    NUMBER = {11},
     PAGES = {Paper No. rnag109, 25},
      ISSN = {1073-7928,1687-0247},
   MRCLASS = {52C40 (05B35 52C35)},
  MRNUMBER = {5082126},
       DOI = {10.1093/imrn/rnag109},
       URL = {https://doi.org/10.1093/imrn/rnag109},
}

\end{document}